\newtheorem{prop}{Proposition}[section]
\newtheorem*{theorem*}{Theorem}
\newtheorem*{definition*}{Definition}
\newtheorem{theorem}[prop]{Theorem}
\newtheorem{proposition}[prop]{Proposition}
\newtheorem{corollary}[prop]{Corollary}
\newtheorem{lemma}[prop]{Lemma}
\newtheorem*{remark}{Remark}
\newcommand{\CC}{{\mathbb C}}
\newcommand{\C}{{\mathbb C}}
\newcommand{\N}{{\mathbb N}}
\newcommand{\cF}{{\mathcal{F}_{\alpha}^2}}
\newcommand{\dF}{{\mathcal{F}_{\alpha}^\infty}}
\newcommand{\cFm}{{\mathcal{F}^2}}
\newcommand{\dFm}{{\mathcal{F}^\infty}}
\newcommand{\supp}{\mathrm{supp\;}}
\newcommand {\dist}{\operatorname{dist}} 
\newcommand{\Hol}{\operatorname{Hol}}
\DeclareMathOperator{\bk}{\textbf{k}}
\title[Multiple sampling and interpolation in Fock spaces]{Geometric conditions for multiple sampling and  interpolation in the Fock space}
\author{A. Borichev, A. Hartmann, K. Kellay, X.  Massaneda }
\address{A.Borichev: Aix-Marseille Univ, CNRS, Centrale Marseille, I2M, 13453 Marseille\\ France}
\email{alexander.borichev@math.cnrs.fr}
\address{A.Hartmann:   Universit\'e de Bordeaux\\
 IMB\\ 351 cours de la Lib\'era\-tion\\ 33405 Talence\\ France}
 \email{Andreas.Hartmann@math.u-bordeaux.fr}  
\address{K.Kellay:   Universit\'e de Bordeaux\\
 IMB\\ 351 cours de la Lib\'eration\\ 33405 Talence\\ France}
\email{karim.kellay@math.u-bordeaux.fr}
\address{X.Massaneda: Universitat  de Barcelona\\
Departament de Matem\`a\-tiques i Inform\`atica\\
Gran Via 585, 08007-Bar\-ce\-lo\-na\\ Spain}
\email{xavier.massaneda@ub.edu}
\keywords{Fock space, multiple interpolation, multiple sampling, Riesz basis}
\subjclass[2000]{30D55, 46C07,46E22, 47B32, 47B35}
\thanks{First author was supported by Russian Science Foundation grant 14-41-00010. Fourth author was supported by the Generalitat de Catalunya (grant 2014 SGR 289) and the Spanish Ministerio de Ciencia e Innovaci\'on (project MTM2014-51834-P)}
\begin{document}

\begin{abstract}  
We study multiple sampling, interpolation and uni\-queness for the classical Fock spaces in the case of unbounded multiplicities. We show that, both in the hilbertian and the uniform norm case, there are no sequences which are simultaneously sampling and interpolating when the multiplicities tend to infinity. 
This answers partially a question posed by Brekke and Seip.
\end{abstract}

\maketitle

\section{Introduction and main results}

Sampling and interpolating sequences in Fock spaces were characterized by Seip and Seip--Wallst\'en in \cite{S1,S2} by means of a certain Beurling--type asymptotic uniform density.  A consequence of these results is that no sequence can be simultaneously interpolating and sampling, hence there are no unconditional or Riesz bases of reproducing kernels neither.  We refer the reader to the monograph  \cite{S5} by K. Seip for an account on these problems. For a similar result in several variables see \cite{GM}.

We would like to consider sampling and interpolation problems involving a finite, not necessarily bounded, number of derivatives at each point (Hermite type interpolation). The idea of derivative sampling and interpolation is well-known from the theory of band-limited functions. 

Band-limited functions appear naturally in the framework of model spaces and their parent Hardy spaces (we refer to \cite{N2} for model spaces and their importance in operator theory). 
Generalized interpolation, a particular instance of which is derivative interpolation, in Hardy spaces was thoroughly studied by Vasyunin \cite{NV} 
and Nikolski \cite{NNN} 
in the late 1970-s and is characterized 
in terms of a generalized Carleson condition (see \cite[Sections C3.2, C.3.3]{N2} for a comprehensive account). 



For the case of sequences with uniformly bounded multiplicity in the classical Fock space a complete description of such derivative sampling and interpolating sequences space was given by Brekke and Seip \cite{BS}. Again, it turns out that there are no sequences which are simultaneously sampling and interpolating in this sense. 

The natural question which then arises, and was already posed by Brekke and Seip, concerns the case when the multiplicities are unbounded. We give conditions on sampling and interpolation in that case. The conditions we obtain are of a somewhat different nature, but still imply that there are no sequences which are both interpolating and sampling, at least when the multiplicities tend to infinity.

We should emphasize that our results make sense when the multiplicities are large. For small multiplicities they essentially give no information.

The results of the following subsection concerning the Hilbert space case were earlier announced in the research note 
\cite{BHKM}. With respect to that research note, observe that the sufficient condition appearing in Theorem~\ref{thm2}(b) has now a weaker and more natural formulation. We will also discuss the situation in the uniform norm in Subsection~\ref{d1.2}. 

\subsection{The Hilbert space case} For $\alpha>0$, define the Fock space  $\cF$ by 
\[
 \cF=\{f\in \Hol(\C): \|f\|^2_2=\|f\|_{\alpha,2}^2:=\frac{\alpha}{\pi}\int_\C|f(z)|^2e^{-\alpha|z|^2}dm(z)<\infty\}.
\]
The constant $\alpha/\pi$ is chosen in such a way that $\|1\|_{\alpha,2}=1$, $dm$ is Lebesgue area measure. 

The Fock space $\cF$ is a Hilbert space with inner product 
$$
\langle f, g\rangle =\frac{\alpha}{\pi}\int_\CC f(z)\overline{g(z)}e^{-\alpha|z|^2}dm(z).
$$
The orthonormalization of the monomials gives the basis 
\[
e_k  (z)=\frac{\sqrt{\alpha^k}}{\sqrt{k!}}z^k,\quad k\ge 0 .
\]
The reproducing kernel  is $k_z(\zeta)=\sum\limits_{k\ge 0} e_k(\zeta) \overline{e_k(z)} =e^{\alpha \bar z \zeta}$, hence
\[
\langle f, k_z\rangle =f(z),\qquad f\in \cF,\quad z\in \C.
\]
It's easy to check that the translations
\[
T_z f(\zeta) =T^\alpha_zf(\zeta):=e^{\alpha \bar z\zeta-\frac{\alpha}{2}|z|^2}f(\zeta-z),\qquad f\in \cF,
\]
act isometrically in $\cF$. 

Let $\bk_z = k_z /\|k_z\|_{2}$ be the normalized reproducing kernel at $z$. 
Note that $T_z 1=\bk_z$. 

A sequence $\Lambda\subset \C$  is called \emph{sampling} for $\cF$ if
\[
 \|f\|_{2}^{2}\asymp \sum_{\lambda\in  
 \Lambda}\frac{|f(\lambda)|^2}{k_\lambda(\lambda)}=\sum_{\lambda\in \Lambda} |\langle f,T_\lambda 1\rangle|^2, 
 \quad f\in\cF,
\]
and \emph{interpolating} if for every $v=(v_\lambda)_{\lambda\in \Lambda}\in \ell^2$, there exists $f\in \cF$ such that
\[
e^{-\frac{\alpha}{2}|\lambda|^2}f(\lambda)=\langle f, T_\lambda 1\rangle =v_\lambda, \qquad \lambda\in\Lambda.
\]
For numerous results on the Fock space and operators acting thereon see the recent book by Zhu \cite{Zhu}.\\

Let us now define sampling and interpolation for the case of higher multiplicity.

We deal with divisors $X$ given as $X=\{(\lambda,m_\lambda)\}_{\lambda\in\Lambda}$, where $\Lambda$ is a sequence of points in $\C$ and $m_\lambda\in\mathbb N$ is the
multiplicity associated with $\lambda$. We associate with each $(\lambda,m_\lambda)$ the subspace 
\[
 N_{\lambda}^2=N_{\lambda,m_\lambda}^2:=\{f\in \cF:f(\lambda)=f'(\lambda)=\cdots=
 f^{(m_\lambda-1)}(\lambda)=0\} .
\]

\begin{definition*}
The divisor $X$ is called \emph{sampling} for $\cF$ if
\[
 \|f\|^2_{2}\asymp \sum_{\lambda\in \Lambda} \|f\|^2_{\cF/N^2_\lambda} =  \sum_{\lambda\in \Lambda}\sum_{k=0}^{m_\lambda-1}|\langle f,T_{\lambda} e_k  \rangle|^2 ,\qquad f\in \cF\ ,
\]
and it is called \emph{interpolating} for $\cF$ if for every sequence 
$$
v=(v_\lambda^{(k)})_{\lambda\in\Lambda,\, 0\le k<m_\lambda}
$$ 
such that
\begin{equation*}
\|v\|_2^2:=\sum_{\lambda\in \Lambda}\sum_{k=0}^{m_\lambda-1}|v_\lambda^{(k)}|^2<\infty
\end{equation*}
there exists a function $f\in \cF$  such that
\begin{equation}\label{interpol}
\langle f,T_{\lambda} e_k  \rangle =v_\lambda^{(k)},\qquad 0\le k <  m_\lambda,\quad \lambda\in \Lambda. 
\end{equation}
\end{definition*}

Equivalently, $X$ is interpolating if for every sequence  $(f_\lambda)_{\lambda\in \Lambda}\subset \cF$ satisfying 
\[
 \sum_{\lambda\in \Lambda} \|f_\lambda\|^2_{\cF/N^2_\lambda} <\infty
\]
 there exists a function $f\in \cF$  such that
\[
 f-f_\lambda\in N^2_\lambda,\qquad \lambda\in \Lambda.
\]

This is exactly the way generalized interpolation is defined in Hardy spaces (see \cite[Section C3.2]{N2} for definitions and results).

An application of the open mapping theorem to the restriction operator 
$$
\mathcal R(f)=\bigl(\langle f,T_\lambda e_k\rangle\bigr)_{\lambda\in\Lambda,\, 0\le k<m_\lambda}
$$
shows that the function $f\in\cF$ such that \eqref{interpol} holds can always be chosen in such a way that $\|f\|_2\le C \|v\|_2$, for some $C>0$ depending only on $X$. The minimal such $C$ is called the \emph{interpolation constant of} $X$, and it will be denoted by $M_X$.

Separation between points in $\Lambda$ plays an important role in our results. Denote by $D(z,r)$ the disc of radius $r$ centered at $z$, $D(r)=D(0,r)$.

\begin{definition*}
A divisor $X$ is said to satisfy \emph{the finite overlap condition} if 
\begin{eqnarray}\label{eqsep} 
S_X=
\sup_{z\in \CC}\sum_{\lambda\in \Lambda}\chi_{D(\lambda,\sqrt{m_\lambda/\alpha})}(z)<\infty.
\end{eqnarray}
\end{definition*}

If $\Lambda$ is a finite union of subsets $\Lambda_j$ such that for every $j$, the discs $D(\lambda,\sqrt{m_\lambda/\alpha})$, $\lambda\in\Lambda_j$, are disjoint, 
then $X$ satisfies the finite overlap condition. It is not clear whether the opposite is true.
 
The following two results give conditions for sampling and interpolation in the case of unbounded multiplicities. They are less precise than the results for the bounded case, in that they do not give characterizations. Still, the gap is sufficiently small to show that no divisor can be simultaneously sampling and interpolating when the multiplicities tend to infinity.

\begin{theorem}\label{thm2}
\begin{itemize}
 \item [(a)] If $X$ is a sampling divisor for $\cF$, 
then $X$ satisfies the finite overlap condition and there exists $C>0$  such that 
\[
 \bigcup_{\lambda\in \Lambda} D(\lambda,\sqrt{m_\lambda/\alpha}+C)= \C.
\]

 \item [(b)] Conversely, let the divisor $X$ satisfy 
 the finite overlap condition. There exists $C=C(S_X)>0$ such that if for some compact subset $K$ of $\C$ we have 
\[
 \bigcup_{\lambda\in \Lambda,\, m_\lambda>\alpha C^2} D(\lambda,\sqrt{m_\lambda/\alpha}-C)= \C \setminus K, 
\]
then $X$ is a sampling divisor for $\cF$.
\end{itemize}
\end{theorem}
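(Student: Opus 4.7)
My plan for the necessary conditions is to test the sampling inequality against the normalized reproducing kernels $\bk_z=T_z 1$, $z\in\C$. Using the Fock commutation $T_{-z}T_\lambda=e^{i\alpha\operatorname{Im}(\bar\lambda z)}T_{\lambda-z}$ and the identity $\langle g,1\rangle=g(0)$, a direct computation gives
$$|\langle T_z 1,T_\lambda e_k\rangle|^2=|T_{\lambda-z}e_k(0)|^2=\frac{(\alpha|\lambda-z|^2)^k}{k!}\,e^{-\alpha|\lambda-z|^2},$$
so that $\|T_z 1\|^2_{\cF/N^2_\lambda}=F_{m_\lambda}(\alpha|\lambda-z|^2)$, where $F_m(\mu):=e^{-\mu}\sum_{k<m}\mu^k/k!$ is the Poisson$(\mu)$ cumulative distribution function at $m-1$. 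Since $\|T_z 1\|_2=1$, the sampling condition becomes the uniform two-sided estimate $\sum_\lambda F_{m_\lambda}(\alpha|\lambda-z|^2)\asymp 1$ on $\C$. Since the median of Poisson$(\mu)$ is close to $\mu$, we have $F_m(\mu)\ge c_0>0$ for $\mu\le m$, so the upper bound on the sum limits the number of $\lambda$ with $|\lambda-z|\le\sqrt{m_\lambda/\alpha}$, yielding the finite overlap condition. The Chernoff tail $F_m(\mu)\le \exp(-c(\mu-m)^2/\mu)$ for $\mu>m$ shows that the lower bound forces some $\lambda$ with $\alpha|\lambda-z|^2\le m_\lambda+O(\sqrt{m_\lambda})$, equivalently $|\lambda-z|\le\sqrt{m_\lambda/\alpha}+O(1)$ for $m_\lambda$ large; an elementary separate argument handles bounded multiplicities.

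\textbf{Part (b).} The Bessel-type inequality $\sum_\lambda\|f\|^2_{\cF/N^2_\lambda}\le C(S_X)\|f\|^2$ follows from the observation that $\|f\|^2_{\cF/N^2_\lambda}$ is comparable to the weighted $L^2$-mass of $f$ on $D(\lambda,\sqrt{m_\lambda/\alpha})$---the diagonal of the reproducing kernel of $\Span\{T_\lambda e_k\}_{k<m_\lambda}$ is essentially $\chi_{D(\lambda,\sqrt{m_\lambda/\alpha})}$ by Poisson concentration---combined with the finite overlap condition. For the sampling direction $\|f\|^2\le C\sum_\lambda\|f\|^2_{\cF/N^2_\lambda}$, my key local lemma says that for each $\lambda$ with $m_\lambda>\alpha C^2$ and $r_\lambda:=\sqrt{m_\lambda/\alpha}-C$, the orthogonal decomposition $f=P_\lambda f+Q_\lambda f$ along $\cF=\Span\{T_\lambda e_k\}_{k<m_\lambda}\oplus N^2_\lambda$ produces
$$\int_{D(\lambda,r_\lambda)}|f|^2e^{-\alpha|z|^2}\frac{\alpha}{\pi}\,dm(z)\le A\,\|f\|^2_{\cF/N^2_\lambda}+\text{(small error)},$$
where the error comes from bounding $Q_\lambda f$ pointwise on $D(\lambda,r_\lambda)$ via Cauchy--Schwarz and the Poisson upper-tail estimate $\sum_{k\ge m_\lambda}|T_\lambda e_k(z)|^2 e^{-\alpha|z|^2}\le\varepsilon(C)$ valid there. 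Summing over $\lambda$, using the covering $\C\setminus K\subseteq\bigcup D(\lambda,r_\lambda)$ and the finite overlap to accumulate the errors, gives a near-sampling inequality
$$\|f\|^2\le A\sum_\lambda\|f\|^2_{\cF/N^2_\lambda}+\varepsilon(C)\|f\|^2+C_K\int_K|f|^2e^{-\alpha|z|^2}\,dm,$$
with $C=C(S_X)$ chosen so that $\varepsilon(C)$ is negligible.

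\textbf{Main obstacle.} The delicate points are making the error term $\varepsilon(C)$ small uniformly in $m_\lambda\ge\alpha C^2$ and handling the compact residual set $K$. The first requires quantitative Chernoff/Poisson tail estimates controlled through the finite overlap condition so that errors do not accumulate catastrophically across $\Lambda$. For the second, my plan is a Fredholm/compactness argument: the inclusion $\cF\hookrightarrow L^2(K,e^{-\alpha|z|^2}dm)$ is compact by Montel's theorem, so the near-sampling inequality upgrades to the full sampling inequality by a standard weak-limit argument, \emph{provided} that $X$ is a uniqueness set for $\cF$. This uniqueness is forced by the covering: a nonzero $f\in\cF$ vanishing to order $m_\lambda$ at each $\lambda$ would have zero-counting $n(R,f)\ge\sum_{|\lambda|\le R}m_\lambda\gtrsim\alpha R^2$ by an area comparison between $D(0,R)\setminus K$ and the covering discs $D(\lambda,r_\lambda)$; this conflicts with the sharp Jensen-type bound on zeros of functions in $\cF$ (derived from $|f(z)|\le\|f\|e^{\alpha|z|^2/2}$) once the strict inequality $r_\lambda<\sqrt{m_\lambda/\alpha}$ is exploited to gain a margin against the critical density.
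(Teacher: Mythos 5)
Part (a) is essentially the paper's argument: test the sampling inequality against the coherent states $T_z1$, identify $\|T_z1\|^2_{\cF/N^2_\lambda}$ with the Poisson cumulative $F_{m_\lambda}(\alpha|\lambda-z|^2)$, and use concentration around the mean $m_\lambda$; the paper organizes this through the functions $\sigma_k,\omega_k$ (Lemma~\ref{l1}) and Lemma~\ref{lem1eq}, but the content is the same and your version is correct.

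For Part (b) the overall architecture (a local comparison between the quotient norm and the weighted $L^2$ mass on a slightly shrunk disc, then summation over the covering, plus a compactness/uniqueness step to dispose of the exceptional set $K$) is the same as the paper's. However, there are two genuine gaps in your version.

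First, the mechanism you propose for the local lemma does not work. You want to bound $\int_{D(\lambda,r_\lambda)}|Q_\lambda f|^2\,d\mu$ by first getting a pointwise estimate $|Q_\lambda f(z)|^2 e^{-\alpha|z|^2}\le\bigl(\sum_{k\ge m_\lambda}|T_\lambda e_k(z)|^2 e^{-\alpha|z|^2}\bigr)\|Q_\lambda f\|_2^2$ via Cauchy--Schwarz and then integrating. The tail sum $\sum_{k\ge m}|e_k(w)|^2e^{-|w|^2}=\sigma_{m-1}(|w|^2)$ is indeed $O(\varepsilon(C))$ on the shrunk disc, but it does \emph{not} decay as $r_\lambda\to\infty$; integrating over $D(\lambda,r_\lambda)$ therefore produces an error of order $\varepsilon(C)\,r_\lambda^2\,\|Q_\lambda f\|_2^2$, which is unbounded in $\lambda$ and cannot be summed. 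What is actually needed is to expand $f=\sum a_kT_\lambda e_k$, use that the $e_k$ remain orthogonal in $L^2(D(R),d\mu)$ with $\int_{D(R)}|e_k|^2d\mu=\sigma_k(R^2)$, and invoke the \emph{relative} incomplete-gamma estimate $\sigma_k\bigl((\sqrt m-a)^2\bigr)\le\varepsilon\,\sigma_k(m)$ for $k\ge m$ (Lemma~\ref{l1}(c)). This compares the mass on the shrunk disc to the mass on the unshrunk disc, so the error is bounded by $\varepsilon\int_{D(\lambda,\sqrt{m_\lambda/\alpha})}|f|^2d\mu$ and the finite overlap condition makes it summable. That step is irreplaceable and is missing from your sketch.

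Second, your uniqueness argument (needed after the Fredholm/compactness reduction) is too loose. The naive Jensen count gives $n(R,f)\lesssim \alpha R^2$ up to lower order, while the area comparison from the covering gives $\sum_{|\lambda|\lesssim R}m_\lambda\gtrsim\alpha R^2$ with essentially the \emph{same} constant; the ``margin'' from $r_\lambda=\sqrt{m_\lambda/\alpha}-C<\sqrt{m_\lambda/\alpha}$ is $O(\sqrt{m_\lambda})$ per disc and is swallowed by the leading $R^2$ term, so there is no room for a contradiction, and the boundary contribution near $|z|=R$ is also uncontrolled. The paper closes this gap by a quantitative mass-redistribution argument (Lemma~\ref{zero1}): one smooths $\Delta\log|f|$ uniformly over each $D(\lambda_k,\sqrt{m_k})$, which yields the sharp bound $2\int_{D(R)}\sum_k\chi_{D(\lambda_k,\sqrt{m_k})}\log\frac{R}{|z|}\,dm\le\pi R^2+O(1)$, and then uses an elementary three-discs lemma (Lemma~\ref{Lem1}(b)) to show that any covering of a neighborhood of infinity by such discs must carry \emph{infinite} overlap area, which produces a contribution growing faster than $\log R$ and contradicts the $O(1)$ slack. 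Your plan has neither of those ingredients and would not reach a contradiction.
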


A separation condition stronger than that in Theorem~\ref{thm2}~(b) permits us to choose a subdivisor with the multiplicities tending to infinity:

\begin{proposition}\label{lemd}
Let the divisor $X$ be such that for every $C>0$ there exists a compact subset $K$ of $\C$ satisfying   
\[
\bigcup_{\lambda\in \Lambda,\, m_\lambda>\alpha C^2} D(\lambda,\sqrt{m_\lambda/\alpha}-C)= \C \setminus K. 
\]
Then 
we can find a subset $\Lambda'$ of $\Lambda$ such that 
for every $C>0$ there is a compact subset $K$ of $\C$ satisfying
\[
\bigcup_{\lambda\in \Lambda',\, m_\lambda>\alpha C^2} D(\lambda,\sqrt{m_\lambda/\alpha}-C)= \C \setminus K, 
\]
and
$$
\lim_{\lambda\in \Lambda',\, |\lambda|\to \infty}m_\lambda=+\infty.
$$
\end{proposition}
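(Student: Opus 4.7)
The plan is to take $\Lambda'$ to be the ``heavy'' part of $\Lambda$---those $\lambda$ whose multiplicity is large in comparison with $|\lambda|$. Concretely, I would fix a non-decreasing function $h\colon[0,\infty)\to[0,\infty)$ with $h(t)\to+\infty$, to be calibrated below, and set
$$
\Lambda'=\{\lambda\in\Lambda:m_\lambda>\alpha\,h(|\lambda|)^2\}.
$$
The tail condition $\lim_{\lambda\in\Lambda',\,|\lambda|\to\infty}m_\lambda=+\infty$ is then built into the definition; the work is to arrange that $\Lambda'$ still satisfies the covering assumption.

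For each $C>0$ let $K(C)$ be the compact given by the hypothesis; enlarging if necessary, assume $K(C)\subset\overline{D}(0,R(C))$ with $R$ non-decreasing and finite. I will impose on $h$, for all sufficiently large $t$,
\begin{enumerate}
\item[(i)] $h(t)\le t/2-1$\quad(sublinearity);
\item[(ii)] $R\!\bigl(h(2t)\bigr)<t$\quad(slowness relative to $R$).
\end{enumerate}
The existence of an $h\to+\infty$ satisfying both is guaranteed by the fact that $R(C)<+\infty$ for every $C>0$, so the generalised inverse $R^{-1}$ also tends to $+\infty$; one can take, e.g., $h(t)=\min\!\bigl(\lfloor\sqrt{t}\rfloor,\,\lfloor R^{-1}(t/2)\rfloor-1\bigr)$ with a cut-off at small $t$.

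Covering is then verified as follows. Fix $C>0$ and take $|z|$ large enough that $h(2|z|)\ge C$ and (ii) holds at $t=|z|$. Applying the hypothesis with shrinkage $C'(z):=h(2|z|)$ yields $\lambda\in\Lambda$ with $m_\lambda>\alpha\,C'(z)^2\ge\alpha C^2$ and $|z-\lambda|\le\sqrt{m_\lambda/\alpha}-C'(z)\le\sqrt{m_\lambda/\alpha}-C$. It remains to show $\lambda\in\Lambda'$. Suppose not, so $\sqrt{m_\lambda/\alpha}\le h(|\lambda|)$; combined with $\sqrt{m_\lambda/\alpha}>C'(z)=h(2|z|)$ and the monotonicity of $h$, this forces $|\lambda|>2|z|$. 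But the triangle inequality together with (i) gives
$$
|\lambda|\le|z|+\sqrt{m_\lambda/\alpha}\le|z|+h(|\lambda|)\le|z|+|\lambda|/2-1\le 2|z|-2,
$$
a contradiction. Hence $\lambda\in\Lambda'$, and the covering for $\Lambda'$ holds outside a suitable compact.

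The main technical difficulty is the joint calibration of $h$: it must grow to $+\infty$ fast enough to force $m_\lambda\to\infty$, yet slowly enough that the hypothesis can be invoked at the inflated shrinkage $C'(z)$ (condition (ii)) and that (i) closes the triangle-inequality trap $|\lambda|\le 2|z|-2$. That all these constraints are simultaneously satisfiable is precisely the content of the quantifier \emph{``for every $C>0$''} in the hypothesis, which ensures $R^{-1}\to+\infty$; without it one cannot guarantee existence of a suitable $h$.
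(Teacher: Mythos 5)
Your proof is correct and takes a genuinely different route from the paper's. The paper's argument is a removal scheme: it fixes radii $R_n$ witnessing the covering condition at level $C=n$ (calling this condition $(U_n)$), and iteratively deletes the annular ``light'' layers $\Lambda_s=\{\lambda:|\lambda|>R_s+s,\ (s-1)^2\le m_\lambda<s^2\}$, then verifies by a nesting-of-inclusions argument that each $(U_n)$ survives the removal. Your approach instead defines $\Lambda'$ in one shot by a threshold $m_\lambda>\alpha\,h(|\lambda|)^2$, with $h\to\infty$ calibrated against both the trivial bound $\sqrt{t}$ and the inverse of the function $C\mapsto R(C)$ recording the size of the compacts; the punchline is a triangle-inequality trap showing that any $\lambda$ the hypothesis produces for the inflated shrinkage $C'(z)=h(2|z|)$ must automatically lie in $\Lambda'$. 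This is cleaner in that the tail condition on multiplicities is built into the definition of $\Lambda'$ and there is a single verification rather than an induction; the price is the slightly fussy calibration of $h$. Two minor technical points you should tidy: (1) condition (ii) needs $R(h(2t))<t$ strictly; your generalized-inverse construction gives $\le$, so redefine $R(C)$ as (say) $1+\max_{w\in K(C)}|w|$, which is automatically non-decreasing in $C$ because $K(C)=\C\setminus\bigcup_{m_\lambda>\alpha C^2}D(\lambda,\sqrt{m_\lambda/\alpha}-C)$ grows with $C$; and (2) the formula for $h$ must be safeguarded when $R^{-1}$ takes the value $+\infty$ (which happens if $R$ is bounded) --- adding a third term such as $\lfloor t/4\rfloor$ inside the $\min$ handles this. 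Neither affects the substance of the argument.
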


\begin{theorem}\label{thm-int}
\begin{itemize}
 \item [(a)] If  $X$ is an interpolating divisor for $\cF$, then there exists $C=C(M_X)>0$ such that the discs 
 \newline\noindent $\{D(\lambda,\sqrt{m_\lambda/\alpha}-C)\}_{\lambda\in\Lambda,\,m_\lambda>\alpha C^2}$ are pairwise disjoint.

\item [(b)] Conversely, if for some $C>0$ the discs $\{D(\lambda,\sqrt{m_\lambda/\alpha}+C)\}_{\lambda\in\Lambda}$ are pairwise disjoint, then $X$ is an interpolating divisor for $\cF$.
\end{itemize}
\end{theorem}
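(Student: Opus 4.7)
My plan is to recast interpolation as the Riesz sequence property of the system $\{T_\lambda e_k\}_{\lambda\in\Lambda,\,0\le k<m_\lambda}$, which is orthonormal within each $\lambda$-block since $T_\lambda$ is an isometry and $\{e_k\}$ is the orthonormal basis of $\cF$. A short duality argument shows that an interpolating divisor with interpolation constant $M_X$ satisfies the lower Riesz bound $\|\sum c_\lambda^{(k)}T_\lambda e_k\|_\cF\ge M_X^{-1}\|c\|_{\ell^2}$ for finitely supported $c$: the interpolant $h$ for $v=c$ satisfies $\|c\|^2_{\ell^2}=\langle h,\sum c_\lambda^{(k)}T_\lambda e_k\rangle\le M_X\|c\|_{\ell^2}\|\sum c_\lambda^{(k)}T_\lambda e_k\|$ by Cauchy--Schwarz. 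Conversely, two-sided Riesz bounds (invertibility of the Gram operator on $\ell^2$) produce a bounded biorthogonal system in the closed span of $\{T_\lambda e_k\}$ and thence an interpolant for any $v\in\ell^2$.

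For (a), suppose two discs $D(\lambda,r_\lambda-C)$ and $D(\mu,r_\mu-C)$ with $r_\nu:=\sqrt{m_\nu/\alpha}$, $m_\lambda,m_\mu>\alpha C^2$, and $\lambda\neq\mu$ intersect at some $z\in\CC$. A direct computation with $\bk_z=T_z 1$ yields
\[
\|P_\lambda\bk_z\|^2=\sum_{k<m_\lambda}|\langle\bk_z,T_\lambda e_k\rangle|^2=\sum_{k<m_\lambda}\frac{(\alpha|z-\lambda|^2)^k}{k!}e^{-\alpha|z-\lambda|^2}=P(Y_\lambda<m_\lambda),
\]
with $Y_\lambda\sim\mathrm{Poisson}(\alpha|z-\lambda|^2)$ and $P_\lambda$ the orthogonal projection onto $(N_\lambda^2)^\perp$. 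Since $\alpha|z-\lambda|^2\le m_\lambda-2C\sqrt{\alpha m_\lambda}+\alpha C^2$ lies well below $m_\lambda$, Chernoff's inequality gives $\|(I-P_\lambda)\bk_z\|^2\le e^{-c\alpha C^2}$, and likewise for $\mu$. The unit vectors $\phi_\nu:=P_\nu\bk_z/\|P_\nu\bk_z\|$ then satisfy $|\langle\phi_\lambda,\phi_\mu\rangle|\ge 1-O(e^{-c\alpha C^2/2})$, contradicting the lower Riesz bound $|\langle\phi_\lambda,\phi_\mu\rangle|\le 1-M_X^{-2}$ (obtained by applying the bound to $\phi_\lambda-e^{i\theta}\phi_\mu$ with $\theta$ chosen so that $e^{-i\theta}\langle\phi_\lambda,\phi_\mu\rangle$ is real positive) once $C=C(M_X)$ is large enough.

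For (b), assume the discs $D(\lambda,r_\lambda+C)$ are pairwise disjoint. We show that the Gram operator $G=T^*T$ on $\ell^2$, with synthesis $Tc:=\sum c_\lambda^{(k)}T_\lambda e_k$, decomposes as $G=I+\Delta$ with $\|\Delta\|<1$: the identity part comes from the block-orthonormality and $\Delta$ contains only the cross-blocks ($\mu\neq\lambda$). By unitary invariance, $\|P_\mu T_\lambda e_k\|=\|P_0^{(m_\mu)}T_{\lambda-\mu}e_k\|$, and the classical displaced-number-state matrix elements
\[
|\langle T_z e_k,e_m\rangle|^2=\frac{k!}{m!}(\alpha|z|^2)^{m-k}e^{-\alpha|z|^2}\bigl[L_k^{(m-k)}(\alpha|z|^2)\bigr]^2\quad(m\ge k)
\]
in terms of generalized Laguerre polynomials show that the $m$-distribution concentrates around $m\approx\alpha|z|^2+k$. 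The separation hypothesis $|\lambda-\mu|>r_\lambda+r_\mu+2C$ gives $\alpha|\lambda-\mu|^2+k-m_\mu\gtrsim m_\lambda+\alpha C^2$, whence $\|P_\mu T_\lambda e_k\|^2\le e^{-c\alpha C^2}$ by Chernoff. A Schur test, together with the packing observation that disjointness of the $D(\mu,r_\mu+C)$'s limits the number of points $\mu$ at any given distance and multiplicity, then yields $\|\Delta\|_{\ell^2\to\ell^2}<1$ for $C$ large. Hence $G$ is invertible, $\{T_\lambda e_k\}$ is a Riesz sequence, and interpolation follows.

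The main technical difficulty is the Schur sum in (b), where the factor $\sqrt{m_\mu}$ from Cauchy--Schwarz on $j<m_\mu$, together with the potential accumulation of many distinct scales of $m_\mu$, must be absorbed by the superexponential decay in $C$ from the Laguerre/phase-space analysis. This is the genuinely new aspect compared to the uniformly bounded multiplicities treated by Brekke--Seip.
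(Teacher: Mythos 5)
Your proof of part (a) is correct and takes a genuinely different route from the paper. You recast the interpolation constant $M_X$ as a lower Riesz bound for the system $\{T_\lambda e_k\}$, then exploit that $\|P_\lambda\bk_z\|^2$ is a Poisson tail probability to show that two intersecting discs force the corresponding normalized projections $\phi_\lambda,\phi_\mu$ of $\bk_z$ to be nearly parallel, contradicting the Riesz bound $|\langle\phi_\lambda,\phi_\mu\rangle|\le 1-M_X^{-2}$. The paper instead uses a local $L^2$ estimate (Lemma~\ref{Lem3}): the interpolating function $f$ with $f\in N^2_\lambda$, $f-T_w1\in N^2_{\lambda'}$ must be small on a common sub-disc $D(w,1)$, which contradicts the mass of $T_w1$ there. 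Both work; yours is shorter and makes the role of the interpolation constant more transparent, while the paper's local-estimate lemma is reused across several results (sampling sufficiency and the $p$-cases), so it buys more mileage.

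Your proof of part (b), however, has a genuine gap that you yourself flag at the end. The paper does not argue via a Gram-matrix decomposition $G=I+\Delta$; it constructs the interpolant directly via H\"ormander's $\overline{\partial}$-estimate, with a weight $\psi=|z|^2+v$ whose logarithmic singularity of order $2m_\lambda$ at each $\lambda$ forces the correction term $u$ to vanish to order $m_\lambda$. The $\overline\partial$-approach produces the interpolant in one stroke and never needs to sum inner products of $T_\lambda e_k$ against $T_\mu e_j$. Your Schur test, by contrast, does not close: after the Cauchy--Schwarz on $j<m_\mu$ you pick up the factor $\sqrt{m_\mu}$, and the per-block estimate $\|P_\mu T_\lambda e_k\|^2\lesssim e^{-c\alpha C^2}$ (which one can indeed obtain, e.g.\ from a Poisson tail after the change of variables $\zeta=w-(\lambda-\mu)$, essentially as in Lemma~\ref{eqsomme}) is \emph{uniform} in the multiplicities but does not improve as $m_\mu\to\infty$. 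Take $m_\lambda=1$ and a single neighbour $\mu$ with $m_\mu=M$ placed so that $|\lambda-\mu|=1+\sqrt{M}+2C$: the discs $D(\lambda,1+C)$, $D(\mu,\sqrt{M}+C)$ are disjoint, yet the Schur row-sum for $(\lambda,0)$ already contains $\sqrt{M}\,e^{-c\alpha C^2/2}\to\infty$ as $M\to\infty$ for any fixed $C$. So, as stated, the Schur test fails; the ``packing observation'' cannot help since it is a \emph{single} nearest block that blows up. To repair the operator-theoretic route you would need an estimate on the block operator norms $\|P_\mu P_\lambda\|_{\mathrm{op}}\lesssim e^{-c\alpha C^2}$ (i.e.\ on the principal angle between the two translated truncated Fock spaces), rather than entrywise or Hilbert--Schmidt bounds, and you would need a block Schur/Cotlar--Stein argument on those norms. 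That estimate is plausible on phase-space grounds but is substantially more delicate than what you have written, and is not a consequence of the displaced-number-state matrix element formula plus Chernoff alone.
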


\begin{remark} It is easily seen that if $X$ is a divisor such that for some $C>0$ the discs 
$\{D(\lambda,\sqrt{m_\lambda/\alpha}-C)\}_{\lambda\in\Lambda,\,m_\lambda>\alpha C^2}$ are pairwise disjoint, 
and if $\lim\limits_{|\lambda|\to \infty}m_\lambda=+\infty$, then $X$ satisfies the finite overlap condition.
\end{remark}

\begin{corollary}\label{thm0}
Let the divisor $X$ satisfy the condition $\lim_{|\lambda|\to \infty}m_\lambda=+\infty$.
Then $X$ cannot be simultaneously interpolating and sampling for $\cF$.
\end{corollary}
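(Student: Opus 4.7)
Suppose for contradiction that $X$ is both sampling and interpolating. By Theorem~\ref{thm2}(a) and Theorem~\ref{thm-int}(a), taking the larger of the two resulting constants, there exists $C>0$ such that, writing $r_\lambda:=\sqrt{m_\lambda/\alpha}$,
\[
\bigcup_{\lambda\in\Lambda} D(\lambda,r_\lambda+C)=\C,
\]
while the discs $\{D(\lambda,r_\lambda-C)\}_{\lambda\in\Lambda,\,m_\lambda>\alpha C^2}$ are pairwise disjoint. Because $m_\lambda\to+\infty$ as $|\lambda|\to\infty$, only finitely many $\lambda$ fall outside the disjoint family, and $r_\lambda$ can be forced to exceed any prescribed threshold by taking $|\lambda|$ large.

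The plan is to obtain a contradiction from the fact that, once $r_\lambda\to+\infty$, the ``packing'' radii $r_\lambda-C$ and the ``covering'' radii $r_\lambda+C$ are asymptotically equal. Fix $T$ to be a large multiple of $C$ and pick $\lambda_0\in\Lambda$ with $|\lambda_0|$ so large that (i) $r_{\lambda_0}\gg T$ and (ii) every $\mu\in\Lambda$ with $|\mu-\lambda_0|\le 3r_{\lambda_0}$ satisfies $m_\mu>\alpha C^2$ and $r_\mu\ge T$; this is possible since $|\mu|\ge|\lambda_0|-3r_{\lambda_0}$ is then itself large. Every $z\in\partial D(\lambda_0,r_{\lambda_0}+C)$ must be covered by some $D(\mu,r_\mu+C)$ with $\mu\neq\lambda_0$, and combining the coverage and disjointness bounds pins $|\mu-\lambda_0|$ to within $O(C)$ of $r_\mu+r_{\lambda_0}$. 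Thus every such $\mu$ is ``almost tangent'' to $\lambda_0$'s packing disc, and a law-of-cosines computation shows each such $\mu$ covers on $\partial D(\lambda_0,r_{\lambda_0}+C)$ an arc of length at most of order $\sqrt{C\min(r_\mu,r_{\lambda_0})}$.

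The circle $\partial D(\lambda_0,r_{\lambda_0}+C)$ has total length of order $r_{\lambda_0}$, so covering it requires many admissible $\mu$'s. The disjointness of the packing discs, together with the lower bound $r_\mu\ge T$, then gives, via a dyadic summation over the scales of $r_\mu$, a bound of order $r_{\lambda_0}\sqrt{C/T}$ on the total arc length that the admissible $\mu$'s can cover. Choosing $T$ large enough (a fixed absolute multiple of $C$ suffices) makes this strictly smaller than $2\pi r_{\lambda_0}$, contradicting the covering property. The main technical obstacle is organizing the dyadic count cleanly across the regimes $r_\mu\ll r_{\lambda_0}$, $r_\mu\sim r_{\lambda_0}$, and $r_\mu\gg r_{\lambda_0}$: the large and intermediate scales contribute only $O(1)$ centers each, but the small-$r_\mu$ scales would dominate the sum were it not for the lower bound $r_\mu\ge T$ produced by the hypothesis $m_\lambda\to+\infty$. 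Alternatively, one may phrase the whole argument globally as a comparison of circle-packing and circle-covering densities in $\C$, invoking the strict gap $\pi/(2\sqrt{3})<2\pi/(3\sqrt{3})$ between the two.
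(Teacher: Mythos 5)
Your overall strategy is the same as the paper's: invoke Theorem~\ref{thm2}(a) to get a covering radius $\sqrt{m_\lambda/\alpha}+C$ and Theorem~\ref{thm-int}(a) to get a disjoint packing radius $\sqrt{m_\lambda/\alpha}-C$, and then derive a geometric contradiction once $m_\lambda\to\infty$ makes the two radii asymptotically equal. Where you diverge is in how the contradiction is extracted. The paper isolates this into the clean Lemma~\ref{Lem1}: any three discs with a common point contain a pair $(i,j)$ with $r_i+r_j-|Z_i-Z_j|\ge c\min(r_i,r_j)$. Since the covering discs cover $\C$, three of them must intersect at points arbitrarily far from the origin, giving a pair $(\lambda,\lambda')$ with $2C_1+\sqrt{m_\lambda}+\sqrt{m_{\lambda'}}-|\lambda-\lambda'|\ge c\min(\sqrt{m_\lambda},\sqrt{m_{\lambda'}})\to\infty$, which contradicts the packing bound $\sqrt{m_\lambda}+\sqrt{m_{\lambda'}}-|\lambda-\lambda'|\le 2C_1$ in one line. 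Your route is a quantitative arc-covering count on the single circle $\partial D(\lambda_0,r_{\lambda_0}+C)$; it captures the same geometric obstruction (near-tangency of the packing discs forces each $\mu$ to cover only a short arc), but is considerably more technical.

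Two points in your sketch need repair. First, the claim that ``the large and intermediate scales contribute only $O(1)$ centers each'' does not by itself close the argument: there are unboundedly many dyadic scales with $r_\mu>r_{\lambda_0}$, so $O(1)$ per scale times $O(\sqrt{Cr_{\lambda_0}})$ arc per center gives a potentially divergent sum. What is actually true (and what you should prove) is that the \emph{total} number of admissible $\mu$ with $r_\mu\gtrsim r_{\lambda_0}$ is $O(1)$, because all such packing discs have radius $\gtrsim r_{\lambda_0}$, are pairwise disjoint, are disjoint from $D(\lambda_0,r_{\lambda_0}-C)$, and each comes within $O(C)$ of it; this forces pairwise angular separations (as seen from $\lambda_0$) bounded below by a fixed constant. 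Second, the alternative via the gap $\pi/(2\sqrt3)<2\pi/(3\sqrt3)$ between circle-packing and circle-covering densities is not a valid shortcut: those are the densities for \emph{equal} circles, and for circles of unequal radii both densities can be driven to $1$, so no gap is available without exploiting that the packing and covering radii of each disc differ only by $O(C)$ while the radii themselves go to infinity. Provided you fix both of these points, your dyadic argument does work, but the paper's three-disc Lemma~\ref{Lem1} accomplishes the same thing far more economically.
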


Thus, denoting by $K_\lambda$ the orthogonal complement of $N^2_\lambda$ in $\cF$, we conclude that
the Fock space $\cF$ has no unconditional (Riesz) bases of subspaces $K_\lambda$, if $\lim_{|\lambda|\to \infty}m_\lambda=+\infty$.

\subsection{The $\dF$ Fock space}\label{d1.2}
Let 
 \[
 \dF=\big\{f\in \Hol(\CC)\text{ : } \|f\|_\infty=\|f\|_{\alpha,\infty}:=\sup_{z\in \CC}|f(z)|
e^{-\frac{\alpha}{2}|z|^2}<\infty\big\}.
\]

In order to consider the corresponding $L^\infty$ sampling and interpolation problems, we 
associate to every $\lambda\in \mathbb C$ the subspace 
\[
N^\infty_\lambda=N^\infty_{\lambda,m_\lambda}=\{f\in \dF:f(\lambda)=f'(\lambda)=\cdots=
f^{(m_\lambda-1)}(\lambda)=0\}.
\]

A divisor $X$ is called {\it sampling} for $\dF$,  
if there exists  $L>0$ such that
\[
  \|f\|_{\alpha,\infty}\leq L\sup_{\lambda\in \Lambda} \|f\|_{\dF/N^\infty_\lambda}. 
\]
In a similar way we define the generalized interpolation. The divisor
$X$ is called interpolating for $\dF$ if for
every sequence $(f_\lambda)_\lambda$ with 
\[
  \sup_{\lambda\in \Lambda}\|f_\lambda\|_{\dF/N^{\infty}_\lambda}<\infty, 
\]
there exists a function 
$f\in \dF$ such that
\[
 f-f_\lambda\in N^\infty_\lambda,\quad \lambda\in\Lambda .
\]
As usual, the norm of the interpolating function $f$ is controlled by
the $\ell^\infty$-norm of the sequence $(\|f_\lambda\|_{\dF/N^\infty_\lambda})_{\lambda\in\Lambda}$.
The minimal constant $C$ such that we can always find an interpolating $f$ with $\|f\|_{\alpha,\infty}\le C\,\sup_{\lambda\in\Lambda}\|f_\lambda\|_{\dF/N^\infty_\lambda}$, is called 
the interpolation constant of $X$, and will be denoted by $M_X$. 

\begin{theorem}\label{thm1infty}\begin{itemize}
\item[(a)] If $X$ is a sampling divisor for $\dF$
then there exists $C>0$ such that
$$
 \bigcup_{\lambda\in \Lambda} D(\lambda,\sqrt{m_\lambda/\alpha}+C)=\C.
$$

\item[(b)] Conversely, if for some $C>0$ and a compact set $K$
we have 
$$
 \bigcup_{\lambda\in \Lambda,\; m_\lambda>\alpha C^2} D(\lambda,\sqrt{m_\lambda/\alpha}-C)=\C\setminus K, 
$$
then $X$ is a sampling divisor for $\dF$.
\end{itemize}
\end{theorem}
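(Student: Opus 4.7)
For part (a), I would argue by contradiction. If the conclusion fails, there exist $z_n\in\C$ with $|z_n-\lambda|-\sqrt{m_\lambda/\alpha}\to+\infty$ uniformly in $\lambda\in\Lambda$. I would test sampling against the translate $f_n:=T_{z_n}1\in\dF$, whose weighted modulus equals $e^{-\alpha|\zeta-z_n|^2/2}$, so $\|f_n\|_{\alpha,\infty}=1$; it suffices to show $\sup_\lambda\|f_n\|_{\dF/N^\infty_\lambda}\to 0$. Since $f_n-P_{\lambda,n}\in N^\infty_\lambda$ for the Taylor polynomial
\begin{equation*}
P_{\lambda,n}(\zeta)=f_n(\lambda)\sum_{k=0}^{m_\lambda-1}\frac{(\alpha\bar z_n(\zeta-\lambda))^k}{k!},
\end{equation*}
one has $\|f_n\|_{\dF/N^\infty_\lambda}\le\|P_{\lambda,n}\|_{\alpha,\infty}$. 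The prefactor satisfies $|f_n(\lambda)|e^{-\alpha|\lambda|^2/2}=e^{-\alpha|\lambda-z_n|^2/2}$ and decays rapidly; for the partial exponential $p_{m_\lambda}$ I would combine the bound $|p_{m_\lambda}(w)|\le e^{|w|}$ on $|w|\le m_\lambda$ with the Stirling-type estimate $|p_{m_\lambda}(w)|\lesssim|w|^{m_\lambda-1}/(m_\lambda-1)!$ on $|w|>m_\lambda$, and pair with the Gaussian weight $e^{-\alpha|\zeta|^2/2}$ to conclude $\|P_{\lambda,n}\|_{\alpha,\infty}\to 0$ uniformly in $\lambda$, the desired contradiction.

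For part (b), the central tool is a Schwarz-type estimate: if $h\in N^\infty_\lambda$ with $m_\lambda>\alpha C^2$ and $|\zeta-\lambda|\le\sqrt{m_\lambda/\alpha}-C$, then
\begin{equation*}
|h(\zeta)|e^{-\alpha|\zeta|^2/2}\le e^{-\alpha C^2}\,\|h\|_{\alpha,\infty}.
\end{equation*}
After reducing to $\lambda=0$ via the isometry $T_{-\lambda}$ and factoring $h(w)=w^{m_\lambda}H(w)$, I would apply the maximum principle to $H$ on the circle $|w|=R:=\sqrt{m_\lambda/\alpha}$ using $|h(w)|\le\|h\|_{\alpha,\infty}e^{\alpha R^2/2}$, and verify that the exponent $m_\lambda\bigl[\log(r/R)+(R^2-r^2)/(2R^2)\bigr]$ at $r=R-C$ is at most $-\alpha C^2$ for every $R>C$ (which follows from $\log(1-x)+x-x^2/2=-x^2-O(x^3)$ multiplied by $m_\lambda=\alpha R^2$). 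Given this, for $f\in\dF$ I would pick $g\in N^\infty_\lambda$ with $\|f-g\|_{\alpha,\infty}\le 2\|f\|_{\dF/N^\infty_\lambda}$; then for $\zeta\in\C\setminus K$ and an admissible $\lambda$ covering $\zeta$,
\begin{equation*}
|f(\zeta)|e^{-\alpha|\zeta|^2/2}\le 2\|f\|_{\dF/N^\infty_\lambda}+e^{-\alpha C^2}\bigl(\|f\|_{\alpha,\infty}+2\|f\|_{\dF/N^\infty_\lambda}\bigr).
\end{equation*}
The supremum over $K$ is transferred to $\partial K'\subset\C\setminus K$ for a slightly larger compact $K'$ via the maximum principle, at the cost of the multiplicative constant $\sup_{\partial K'}e^{\alpha|\cdot|^2/2}$, and the previous estimate is applied there.

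The main obstacle is closing the absorption in part (b): combining the $\C\setminus K$ and $K$ estimates yields a bound of the form $\|f\|_{\alpha,\infty}\le L_1\sup_\lambda\|f\|_{\dF/N^\infty_\lambda}+L_2 e^{-\alpha C^2}\|f\|_{\alpha,\infty}$, so the absorption forces the product of $L_2$ (depending on $K$) with $e^{-\alpha C^2}$ to be strictly below $1$, i.e., $C$ large relative to the diameter of $K$. Since the hypothesis supplies only \emph{some} $C$, I would handle this by enlarging $K'$ carefully and exploiting evaluation functionals at $\lambda$'s of moderate multiplicity close to $K$ (which contribute to $\|f\|_{\dF/N^\infty_\lambda}$ independently of the admissibility condition $m_\lambda>\alpha C^2$), thereby trimming the compact piece. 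The analogous technical point in part (a) is the uniform-in-$\lambda$ control across the subcritical/supercritical transition $|\alpha z_n(\zeta-\lambda)|\sim m_\lambda$; verifying that the Gaussian prefactor $e^{-\alpha|\lambda-z_n|^2/2}$ dominates all polynomial losses in both regimes is the delicate step.
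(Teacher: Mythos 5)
For part (a), your approach is structurally different from the paper's, and unfortunately it has a genuine gap. You propose to bound $\|f_n\|_{\dF/N^\infty_\lambda}$ by $\|P_{\lambda,n}\|_{\alpha,\infty}$, where $P_{\lambda,n}$ is the (bare) Taylor polynomial of $f_n=T_{z_n}1$ at $\lambda$. But $\|P_{\lambda,n}\|_{\alpha,\infty}$ does \emph{not} tend to $0$ uniformly in $\lambda$. For instance, if $0$, $\lambda$, $z_n$ are collinear with $|\lambda|\sim|z_n-\lambda|=d\to\infty$ and $m_\lambda\approx\alpha d^2$ (so that the separation $\rho_n=d-\sqrt{m_\lambda/\alpha}$ grows, but slowly compared to $d$), then evaluating at $\zeta=0$ gives $P_{\lambda,n}(0)=f_n(\lambda)\,p_{m_\lambda}(-\alpha\bar z_n\lambda)$ with $|\alpha\bar z_n\lambda|$ a fixed multiple of $m_\lambda$; the partial exponential $p_{m}$ on the negative real axis at argument $\asymp m$ has modulus of order $(ce)^{m}/\sqrt{m}$ for some $c>1$, which overwhelms the Gaussian prefactor $|f_n(\lambda)|$, so $|P_{\lambda,n}(0)|\to\infty$ while the weight at $0$ is $1$. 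Thus the representative you chose is a poor one: it is $\|f_n\|_{\dF/N^\infty_\lambda}$ that is small, not $\|P_{\lambda,n}\|_{\alpha,\infty}$. The paper avoids this by using instead the projection $g_{\lambda,n}=\sum_{k<m_\lambda}\langle f_n,T_\lambda e_k\rangle\,T_\lambda e_k$, whose representatives carry the Gaussian factor $T_\lambda 1$ built in; one then imports the $L^2$ bound \eqref{tz5} from the proof of Theorem~\ref{thm2} and uses $\|h\|_{\alpha,\infty}\le\|h\|_{\alpha,2}$ to conclude.

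For part (b), there is also a genuine gap, and you partly identify it yourself. Your absorption argument requires $L_2\,e^{-\alpha C^2}<1$, where $L_2$ depends on the size of the compact $K$ (and the enlargement $K'$), and the hypothesis only supplies \emph{some} $C>0$ and an unspecified $K$; your proposed patch via ``evaluation functionals at $\lambda$'s of moderate multiplicity close to $K$'' is not developed and does not obviously close the estimate. The paper handles this differently and without any absorption: suppose $\|f_n\|_{\alpha,\infty}=1$ while $\sup_\lambda\|f_n\|_{\dF/N^\infty_\lambda}\to 0$, pass to a locally uniform limit; if the limit is nonzero, $X$ is a zero divisor and Theorem~\ref{thm1} gives a contradiction with the covering hypothesis; if the limit is zero, then the supremum over $K$ is eventually $\le 1/2$, while Lemma~\ref{Lemt2} --- which under the hypothesis of \emph{small quotient norm} yields a strict $(1-\eta(C))$ contraction on each disc $D(\lambda,\sqrt{m_\lambda}-C)$ --- bounds the supremum over $\C\setminus K$ by $1-\eta$, forcing $\|f_n\|_\infty<1$. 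The compactness dichotomy (and the use of Theorem~\ref{thm1} in case the limit is nonzero) is precisely what makes the result valid for every $C>0$.
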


This result is slightly stronger than Theorem~\ref{thm2} in that for the $p=\infty$ case any $C>0$ is sufficient for sampling while for $p=2$ we need $C>C(S_X)$.

\begin{theorem}\label{thm2infty}
\begin{itemize}
\item[(a)] If $X$ is an interpolating divisor for $\dF$, then there exists $C=C(M_X)>0$ such that
the discs \newline\noindent  $\{D(\lambda,\sqrt{m_\lambda/\alpha}-C)\}_{\lambda\in \Lambda,\, m_\lambda>\alpha C^2}$ are pairwise  disjoint.
\item[(b)] Conversely, there exists $C_0>0$ such that if the discs \newline\noindent$\{D(\lambda,\sqrt{m_\lambda/\alpha}+C_0)\}_{\lambda\in \Lambda}$  are pairwise  disjoint, then $X$ is an interpolating divisor for $\dF$.
\end{itemize}
\end{theorem}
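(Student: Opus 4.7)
For part~(a), I plan to mirror the argument for Theorem~\ref{thm-int}(a), replacing Hilbert-space inner products with pointwise estimates. Given $M_X$, fix $\lambda,\mu\in\Lambda$ with $m_\lambda,m_\mu>\alpha C^2$ for a constant $C=C(M_X)$ to be determined. Feeding into the interpolation the data $f_\lambda=\bk_\xi$ (normalized kernel at a point $\xi$ to be chosen) and $f_\nu=0$ for $\nu\neq\lambda$ produces $f\in\dF$ with $\|f\|_{\alpha,\infty}\le M_X$, $f-\bk_\xi\in N^\infty_\lambda$, and $f\in N^\infty_\mu$. Translating $\lambda$ to the origin via the $\dF$-isometry $T_{-\lambda}$, factoring out $z^{m_\lambda}$ from the translated $f-\bk_\xi$ (which has $\dF$-norm $\le M_X+1$ and vanishes to order $m_\lambda$ at $0$), and applying the maximum modulus principle on $\overline{D(0,\sqrt{m_\lambda/\alpha})}$, I obtain
\[
\frac{|f(\xi)-\bk_\xi(\xi)|}{e^{\alpha|\xi|^2/2}}\le(M_X+1)\left(\frac{|\xi-\lambda|}{\sqrt{m_\lambda/\alpha}}\right)^{m_\lambda}e^{m_\lambda/2-\alpha|\xi-\lambda|^2/2}.
\]
A Taylor analysis (writing $|\xi-\lambda|=\sqrt{m_\lambda/\alpha}(1-\delta)$ and using $\log(1-\delta)+\delta-\delta^2/2\le-\delta^2$) shows this ratio is at most $1/2$ whenever $|\xi-\lambda|\le\sqrt{m_\lambda/\alpha}-C_1$ with $C_1=\sqrt{\alpha^{-1}\log(2(M_X+1))}$, so $|f(\xi)|\ge e^{\alpha|\xi|^2/2}/2$ in that range.

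The symmetric estimate, now applied to $f\in N^\infty_\mu$ with $\|f\|_{\alpha,\infty}\le M_X$, shows that such a lower bound on $|f(\xi)|$ forces $|\xi-\mu|\ge\sqrt{m_\mu/\alpha}-C_2$ with $C_2=\sqrt{\alpha^{-1}\log(2M_X)}$. I place $\xi$ on the segment $[\lambda,\mu]$ at distance $\sqrt{m_\lambda/\alpha}-C_1$ from $\lambda$; if $|\lambda-\mu|$ is too small to allow this, taking $\xi=\mu$ directly gives $|f(\mu)|\ge e^{\alpha|\mu|^2/2}/2>0$, contradicting $f(\mu)=0$. Either way, I conclude $|\lambda-\mu|\ge\sqrt{m_\lambda/\alpha}+\sqrt{m_\mu/\alpha}-(C_1+C_2)$, which is the pairwise disjointness in~(a) with $C=(C_1+C_2)/2$.

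For part~(b), given $(f_\lambda)_\lambda$ with $\sup_\lambda\|f_\lambda\|_{\dF/N^\infty_\lambda}\le 1$, I plan a $\bar\partial$-construction adapted to the $L^\infty$ setting. After replacing each $f_\lambda$ by an entire representative of comparable $\dF$-norm, I choose smooth cutoffs $\chi_\lambda$ supported in $D(\lambda,\sqrt{m_\lambda/\alpha}+C_0)$, identically $1$ on $D(\lambda,\sqrt{m_\lambda/\alpha}+C_0/2)$, with $\|\nabla\chi_\lambda\|_\infty=O(1)$, and form the smooth candidate $u=\sum_\lambda\chi_\lambda f_\lambda$. Near each $\lambda$, $u$ coincides with $f_\lambda$ and hence represents the correct coset modulo $N^\infty_\lambda$, but $u$ is only smooth. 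I then solve $\bar\partial v=\bar\partial u$ with $L^\infty$-control using the subharmonic weight
\[
\varphi(z)=\alpha|z|^2+2\sum_\lambda m_\lambda\log^+\frac{\sqrt{m_\lambda/\alpha}+C_0/2}{|z-\lambda|},
\]
obtaining, via Berndtsson--Ortega-Cerd\`a-type $L^\infty$ $\bar\partial$-estimates, a solution $v\in\dF$ that vanishes to order $m_\lambda$ at each $\lambda$ because of the logarithmic singularities of $\varphi$; then $f=u-v$ is the desired interpolant.

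The main obstacle in~(b) is verifying the required $L^\infty$ $\bar\partial$-estimate with the weight $\varphi$: one must show that $\Delta\varphi\gtrsim\alpha$ pointwise off the enlarged discs, and that the logarithmic contributions do not conspire to overpower the global growth $\alpha|z|^2$. The pairwise disjointness of the discs $D(\lambda,\sqrt{m_\lambda/\alpha}+C_0)$ is precisely what prevents the $m_\lambda\log|z-\lambda|$ terms from piling up and spoiling subharmonicity off the discs; choosing $C_0$ a sufficiently large absolute constant yields both solvability and the quantitative $\dF$-bound on $v$, completing the interpolation.
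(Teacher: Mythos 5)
Part~(a) is essentially the paper's argument: the paper reduces it to Lemma~\ref{Lemt2b} and then argues exactly as in the necessity part of Theorem~\ref{thm-int}, while you re-derive the content of that lemma inline via the maximum modulus principle and Lemma~\ref{Lemt1}. (One small fix: with $C_2=\sqrt{\alpha^{-1}\log(2M_X)}$ the symmetric estimate gives only $|f(\xi)|e^{-\alpha|\xi|^2/2}\le 1/2$, not a strict contradiction with the lower bound $\ge 1/2$; take, say, $C_2=\sqrt{\alpha^{-1}\log(4M_X)}$.)

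Part~(b) contains a concrete error: the sign of the logarithmic correction in your weight is reversed. With $\varphi(z)=\alpha|z|^2+2\sum_\lambda m_\lambda\log^+\bigl(R_\lambda/|z-\lambda|\bigr)$ and $R_\lambda=\sqrt{m_\lambda/\alpha}+C_0/2$, one has $\varphi\to+\infty$ at each $\lambda$, so $e^{-\varphi}\sim|z-\lambda|^{2m_\lambda}\to 0$ there. The finiteness of a weighted norm of the correction $v$ against $e^{-\varphi}$ then places \emph{no} constraint on $v$ near $\lambda$, so $v$ is not forced to vanish and $f=u-v$ need not interpolate. Moreover $\Delta\varphi$ carries the negative point masses $-4\pi m_\lambda\delta_\lambda$, so $\varphi$ is not even subharmonic. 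What is needed (see Section~\ref{sec5}) is $\psi=\alpha|z|^2+v$ with $v\le 0$ and $v\sim 2m_\lambda\log|z-\lambda|$ near $\lambda$, so that $e^{-\psi}\sim|z-\lambda|^{-2m_\lambda}$ blows up at $\lambda$ and forces the vanishing.

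Even after the sign is corrected, the claim that ``$\Delta\varphi\gtrsim\alpha$ off the enlarged discs plus disjointness for $C_0$ large'' yields the $L^\infty$ bound passes over the actual difficulty, which is why the $p=\infty$ case is treated separately from $p=2$ in the paper. Berndtsson's weighted sup-norm theorem from \cite{B97}, which the paper uses, requires a weight whose Laplacian is bounded below by $a/(\sqrt{m_n}+2a-|z-\lambda_n|)^2$ \emph{inside} each disc $D_n$; the naive truncated-logarithm weight has Laplacian zero there away from $\lambda_n$ and does not qualify. Constructing a weight with the right Laplacian profile is exactly the content of Lemma~\ref{dd4}. One then also needs the auxiliary function $w_{N,\zeta}=(|z-\zeta|^3+2a)^{-1}$ and the verification of $\Delta w_{N,\zeta}\le w_{N,\zeta}(\Delta\psi_N-4\Omega_N)$, a truncation to finitely many nodes followed by a normal-families limit, and the extension of the resulting pointwise bound from outside $\bigcup_n D''_n$ into the discs via Lemma~\ref{Lemt3}. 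None of these steps appear in your outline, so as written it does not establish part~(b).
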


This result is slightly weaker than Theorem~\ref{thm0} in that for the $p=2$ case any $C>0$ is sufficient for interpolation while for $p=\infty$ we need $C\ge C_0$ for some absolute constant $C_0$.

\begin{corollary}\label{thm0inf}
Let the divisor $X$ satisfy the condition $\lim_{|\lambda|\to \infty}m_\lambda=+\infty$.
Then $X$ cannot be simultaneously interpolating and sampling for $\dF$.
\end{corollary}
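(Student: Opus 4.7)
The plan is to combine the geometric necessary conditions from Theorems~\ref{thm1infty}(a) and~\ref{thm2infty}(a) and derive a contradiction in the spirit of Corollary~\ref{thm0} for the Hilbert case. Suppose $X$ is simultaneously sampling and interpolating for $\dF$. By Theorem~\ref{thm1infty}(a) there is $C_1>0$ such that $\bigcup_{\lambda\in\Lambda}D(\lambda,r_\lambda+C_1)=\CC$, where $r_\lambda:=\sqrt{m_\lambda/\alpha}$, and by Theorem~\ref{thm2infty}(a) there is $C_2>0$ such that the discs $D(\lambda,r_\lambda-C_2)$ with $m_\lambda>\alpha C_2^2$ are pairwise disjoint. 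Since $m_\lambda\to\infty$ as $|\lambda|\to\infty$, only finitely many $\lambda$ violate the second condition, so after discarding them we may assume that all deflated discs are disjoint while the inflated ones still cover $\CC$ outside a compact set.

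The crux is that the relative gap $(r_\lambda+C_1)/(r_\lambda-C_2)$ tends to $1$ as $r_\lambda\to\infty$, yet disjoint planar discs cannot be made to cover the plane through a vanishingly small relative inflation. Concretely, I would pick $\lambda_0\in\Lambda$ with $r_0:=r_{\lambda_0}$ very large and analyze the circle $\partial D(\lambda_0,r_0+C_1)$: it must be covered by sampling discs of other $\mu\in\Lambda$. A law-of-cosines computation, combined with the separation $|\mu-\lambda_0|\ge r_0+r_\mu-2C_2$, shows that each such $\mu$ intersects this circle in an arc of angular width at most $c_1\sqrt{(C_1+C_2)/r_0}$, whereas disjointness forces $\mu$ to occupy angular width at least $c_2\,r_\mu/(r_0+r_\mu)$ as seen from $\lambda_0$. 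Imposing that the arcs cover $2\pi$ while the widths sum to at most $2\pi$, a pigeonhole argument yields a neighbor $\mu_0\in\Lambda$ with $r_{\mu_0}\le C\sqrt{(C_1+C_2)\,r_0}$ and $|\mu_0-\lambda_0|\le Cr_0$, for some absolute $C$.

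Iterating, I obtain a chain $\mu_0,\mu_1,\mu_2,\ldots$ in $\Lambda$ with $r_{\mu_{k+1}}\le C\sqrt{(C_1+C_2)\,r_{\mu_k}}$, which contracts geometrically to the universal fixed scale $r^{*}\asymp C_1+C_2$ in $O(\log\log r_0)$ steps, while the total displacement along the chain stays $O(r_0)$. A sequence of starting points $\lambda_0^{(n)}\to\infty$ with $|\lambda_0^{(n)}|/r_{\lambda_0^{(n)}}\to\infty$ can be produced using the disjointness bound $\sum_{\lambda\in D(R)}r_\lambda^2\le R^2+O(R)$: a pigeonhole inside $D(R)\setminus D(R/2)$ furnishes some $\lambda$ with $|\lambda|\sim R$ and $r_\lambda=o(R)$. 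The terminal points of the corresponding chains then satisfy $|\mu_n^{(K_n)}|\to\infty$ while $m_{\mu_n^{(K_n)}}$ stays bounded by an absolute constant, contradicting $m_\lambda\to\infty$. The main technical difficulty is the quantitative arc-length plus pigeonhole estimate of the second paragraph; once that is established, the iteration and the selection of starting points are essentially routine.
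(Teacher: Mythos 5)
Your first paragraph sets up the right ingredients — Theorems~\ref{thm1infty}(a) and~\ref{thm2infty}(a) give, respectively, a covering by the inflated discs $D(\lambda,r_\lambda+C_1)$ and the pairwise disjointness of the deflated discs $D(\lambda,r_\lambda-C_2)$ for $m_\lambda>\alpha C_2^2$ — and the observation that finitely many small multiplicities can be discarded is correct. But the route you then take (circle $\partial D(\lambda_0,r_0+C_1)$, law-of-cosines arc bounds, pigeonhole, and an iterated chain of neighbours with geometrically shrinking radii) is both much heavier than necessary and, as written, has a genuine gap. The key step asserts that ``disjointness forces $\mu$ to occupy angular width at least $c_2\,r_\mu/(r_0+r_\mu)$ as seen from $\lambda_0$'' and that ``the widths sum to at most $2\pi$.'' That second claim does not follow from disjointness: disjoint discs at different distances from $\lambda_0$ can have overlapping angular footprints when viewed from $\lambda_0$, and you would need a separate argument (using the fact that all these discs touch a fixed thin annulus around $\partial D(\lambda_0,r_0+C_1)$, plus a careful radial-separation estimate) to control the total overlap. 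Until that is supplied, the pigeonhole that produces $\mu_0$ with $r_{\mu_0}\lesssim\sqrt{(C_1+C_2)r_0}$ is unjustified, and the whole chain construction and the $O(\log\log r_0)$ iteration built on top of it are left hanging. There are also softer points you would need to pin down: that $\partial D(\lambda_0,r_0+C_1)$ is in fact covered only by other sampling discs, and the pigeonhole inside $D(R)\setminus D(R/2)$ that produces the initial points $\lambda_0^{(n)}$ with $|\lambda_0^{(n)}|/r_{\lambda_0^{(n)}}\to\infty$.

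The paper's intended proof — stated to be ``completely analogous to that of Corollary~\ref{thm0}'' — is far shorter and sidesteps all of this. Since the open discs $D(\lambda,r_\lambda+C_1)$ cover $\C$, there must be points (arbitrarily far out) lying in at least three of them; this is the elementary fact that a circle cannot be covered by pairwise disjoint proper open arcs, applied to $\partial D(\lambda,r_\lambda+C_1)$. Lemma~\ref{Lem1}(a) then gives two of the three discs with $(r_\lambda+C_1)+(r_{\lambda'}+C_1)-|\lambda-\lambda'|\ge c\min(r_\lambda,r_{\lambda'})$, while the disjointness of the deflated discs gives $r_\lambda+r_{\lambda'}-|\lambda-\lambda'|\le 2C_2$. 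Since $m_\lambda\to\infty$ forces $\min(r_\lambda,r_{\lambda'})\to\infty$ along triple intersections tending to infinity, these two inequalities are incompatible. I recommend you look at Lemma~\ref{Lem1} and the proof of Corollary~\ref{thm0} in Section~\ref{secnew}: the single geometric fact encapsulated there replaces your entire arc-estimate, pigeonhole, and chain machinery.
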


The proof of this Corollary is completely analogous to that of Corollary~\ref{thm0}.

The problems of sampling and interpolation are linked to that of uniqueness, and thus to zero divisors. 
We say that $X$ is a zero divisor for $\dF$ if $\cap_{\lambda\in \Lambda}N^\infty_{\lambda,m_\lambda}\not=\{0\}$. 
To our knowledge, there is no characterization of the zero divisors of the Fock space. Some conditions are discussed in \cite{L, Zhu}. 

We will establish here a necessary condition for zero divisors which seems sharper than those known so far.

\begin{theorem}\label{thm1}
Let $X$ be a divisor. If there exists a compact subset $K$ of $\C$ such that
\begin{equation}
\label{ud1}
\bigcup_{\lambda\in \Lambda} D(\lambda,\sqrt{m_\lambda/\alpha})=\C\setminus K,
\end{equation}
then $X$ is not a zero divisor for $\dF$. 
\end{theorem}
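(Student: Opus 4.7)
My plan is to proceed by contradiction. Assume $f\in\dF\setminus\{0\}$ vanishes at each $\lambda\in\Lambda$ to order at least $m_\lambda$; normalize $\|f\|_{\alpha,\infty}=1$, and write $r_\lambda:=\sqrt{m_\lambda/\alpha}$. Since the Fock translation $T_{z_0}^\alpha$ is an isometry that merely shifts the zero set $\Lambda$ and the compact set $K$, after a suitable translation I may assume that $|f(0)|\ge 1/2$ and that the covering hypothesis still holds for the translated data.

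The first tool is Jensen's formula at the origin on $D(0,R)$:
\[
\log|f(0)|+\int_0^R\frac{n(t)}{t}\,dt=\frac{1}{2\pi}\int_0^{2\pi}\log|f(Re^{i\theta})|\,d\theta\le\frac{\alpha R^2}{2},
\]
where $n(t)=\sum_{|\lambda|<t}m_\lambda$, so $N(R):=\int_0^R n(t)/t\,dt\le \alpha R^2/2+\log 2$. The second tool is the covering. Comparing areas of $\bigcup_\lambda D(\lambda,r_\lambda)$ and $D(0,R)\setminus K$, and using $\pi r_\lambda^2=\pi m_\lambda/\alpha$, yields $\sum m_\lambda\ge \alpha R^2-O(1)$ over those $\lambda$ whose disc meets $D(0,R)$; with a careful accounting of the $\lambda$'s lying just outside $D(0,R)$ this gives $n(R)\ge \alpha R^2-O(R)$, and hence $N(R)\ge\alpha R^2/2-O(R)$.

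These two bounds for $N(R)$ agree at leading order but leave a first-order gap, and the heart of the proof is to close it. The crucial geometric fact is that Euclidean discs cannot tile the plane: any covering involves strict overlap, so $\sum_\lambda \pi r_\lambda^2$ must strictly exceed the covered area. Quantifying this for discs with $r_\lambda=\sqrt{m_\lambda/\alpha}$, $m_\lambda\in\mathbb N$, should yield $n(R)\ge \alpha R^2+\omega(R)$ with $\omega(R)\to\infty$, contradicting Jensen. I would implement this via a partition of unity subordinate to the cover, combined with the sharp Jensen estimate on each disc
\[
\log|h_\lambda(\lambda)|\le\frac{\alpha|\lambda|^2}{2}+\frac{m_\lambda}{2}\log\frac{\alpha e}{m_\lambda}, \qquad h_\lambda(z):=\frac{f(z)}{(z-\lambda)^{m_\lambda}},
\]
which controls the leading Taylor coefficient of $f$ at $\lambda$ and is smaller than the trivial weight bound by the factor $(\alpha e/m_\lambda)^{m_\lambda/2}$ coming from the overlap. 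An alternative route is Hadamard factorization: write $f=Pg$ with $P$ the Weierstrass canonical product of genus $2$ over the zeros (convergent since $\sum m_\lambda/|\lambda|^3<\infty$) and $g$ entire and non-vanishing; show from the covering that $\log|P(z)|-\alpha|z|^2/2\to +\infty$ as $|z|\to\infty$, forcing $|g|\to 0$ and contradicting $g\not\equiv 0$.

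The main obstacle is making the strict overcovering quantitative when the radii can be highly unequal (since the $m_\lambda$ are unbounded), where universal constants such as the hexagonal covering density $2\pi/\sqrt{27}$ are not directly available; this step requires a geometric counting argument exploiting the specific constraints $r_\lambda^2=m_\lambda/\alpha$ and $m_\lambda\in\mathbb N$.
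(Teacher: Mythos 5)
Your high-level strategy is right — use the Jensen-type growth bound of $\log|f|$ against the area information coming from the covering, and exploit that a covering by discs must strictly overcover — but the two concrete implementations you sketch both run into the gap you yourself flag at the end, and that gap is real, not a technicality. The cleaner device used in the paper is different and worth noting.

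The area-comparison step in your argument does not give $n(R)\ge\alpha R^2-O(R)$. A disc $D(\lambda,r_\lambda)$ with $|\lambda|>R$ but $r_\lambda>|\lambda|-R$ can cover a large portion of $D(0,R)$ while contributing nothing to $n(R)$. Jensen only gives $m_\lambda\lesssim|\lambda|^2$, so $r_\lambda$ can be comparable to $|\lambda|$; the discs contributing to the area of $D(0,R)\setminus K$ may have centers out to distance $\asymp R$ beyond $\partial D(0,R)$, and the ``careful accounting'' loses a positive fraction of the main term, not an $O(R)$ error. So the first-order gap between your upper and lower bounds on $N(R)$ is not even established before you try to improve it. And your proposed improvement — a quantitative strict-overcovering bound of the form $n(R)\ge\alpha R^2+\omega(R)$, $\omega\to\infty$, via a partition of unity or Hadamard — is asserted, not proved; as you note, the hexagonal-density type bounds do not apply when the radii are wildly unequal, and you do not produce the replacement counting argument.

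The paper sidesteps both difficulties with a single lemma (Lemma~\ref{zero1}) that is the genuinely new ingredient you are missing. Starting from $\log|f|$, one constructs a subharmonic majorant $h\le \log\|f\|_\infty+|z|^2/2$ by redistributing each point mass $2\pi m_\lambda\delta_\lambda$ of $\Delta\log|f|$ uniformly over $D(\lambda,\sqrt{m_\lambda/\alpha})$; then $\Delta h\ge 2\sum_\lambda\chi_{D(\lambda,\sqrt{m_\lambda/\alpha})}$, and Green's formula yields
\[
2\int_{D(R)}\sum_\lambda\chi_{D(\lambda,\sqrt{m_\lambda/\alpha})}(z)\log\frac{R}{|z|}\,dm(z)\le \pi R^2+O(1).
\]
This is a ``smeared'' Jensen inequality phrased directly in terms of the indicator functions appearing in the covering hypothesis, so the boundary-disc bookkeeping that breaks your $n(R)$ count never arises: only the part of each disc inside $D(R)$ contributes. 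With this inequality, the covering $\sum\chi\ge 1$ on $\C\setminus K$ already saturates the right-hand side to leading order, and one only needs the set $E$ where $\sum\chi\ge 2$ to have area exceeding $m(K)$ — a finite, not infinitesimal, amount of extra overlap — for the weight $\log(R/|z|)$ to accumulate a $(\text{area})\cdot\log R$ surplus and force a contradiction. That $m(E)=\infty$ is supplied by the elementary geometric Lemma~\ref{Lem1}(b) about triple intersections of discs. In short: you correctly intuited that ``discs cannot tile'' is the crux and that a Jensen-type bound is the other half, but you lack the redistribution lemma that converts the zero-counting problem into an integral of indicator functions, which is what makes the overlap contribution only need to be \emph{infinite in measure} rather than to beat a term of size $R^2$ or even $R$.

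Your ``sharp Jensen estimate on each disc'' is correct (it is Jensen's formula on $D(\lambda,r_\lambda)$ applied to $f/(z-\lambda)^{m_\lambda}$ with the cross term averaging out), and it is morally what the redistribution lemma achieves locally; but you never show how to assemble the local estimates into a global bound, and a partition of unity alone will not reproduce the $\log(R/|z|)$ weighting that makes the final comparison work. The Hadamard route has an additional obstacle you don't address: showing $\log|P(z)|-\alpha|z|^2/2\to+\infty$ from the covering is precisely the same quantitative overcovering statement you were unable to prove.
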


\subsection{The $\mathcal F^p_\alpha$ Fock spaces}\label{d1.3}
By analogy to the Fock space $\cF$, we can consider the Fock spaces $\mathcal F^p_\alpha$ consisting of  entire functions $f$ such that 
$f(z)\exp(-\alpha|z|^2/2)$ belongs to $L^p(\mathbb C)$, $1<p<\infty$, and the corresponding sampling and interpolation problems. At the end of the paper we discuss how our results extend to $2<p<\infty$ for which we obtain conditions on sampling (Theorem~\ref{thmsamp}) similar to Theorem~\ref{thm2} and on interpolation (Theorem~\ref{thmintp}) similar to Theorem~\ref{thm2infty}. 
As a consequence, for $2<p<\infty$ we can obtain an analogue of Corollary~\ref{thm0inf}.

It is of interest whether similar results are valid for $1<p<2$.\\

The plan of the paper is the following. In the next section we establish two geometrical results including Proposition~\ref{lemd} and the proof of Corollary~\ref{thm0}. 
In Section~\ref{sec2} we prove the uniqueness Theorem~\ref{thm1}. 
In Section~\ref{sec3} we collect some results on local $L^2$ estimates and the finite overlap condition. Sections~\ref{sec4} and \ref{sec5} are devoted to the proofs of  Theorems~\ref{thm2} and \ref{thm-int}, 
respectively.  
Theorems~\ref{thm1infty} and \ref{thm2infty} are proved in Section~\ref{sec7}. Section~\ref{sec8} is 
devoted to the discussion of the case $2<p<\infty$.\\

Let us comment on the techniques used in this paper. One central result  is that if a function of norm one on a disc has a small quotient norm with respect to functions vanishing up to a given order (adapted to the radius) at the center of the disc, then it will be small in a disc with a slightly smaller radius. In order to obtain such kind of results, we use the maximum principle and some estimates of the incomplete gamma function. This will indeed be the key for the necessity part of the interpolation result and the sufficiency part of the sampling result. Another key ingredient for sampling is a uniqueness result based on the uniform redistribution of the point mass $\Delta(\log|f|)$ over discs centered at the zeros of $f$ with radius adapted to the multiplicity, and measuring the radial growth of the total redistributed mass. The sufficiency of the interpolation part for $p=2$ is based on $\overline{\partial}$-techniques by H\"ormander requiring some subtle choices of the
  weight functions in order to fit to arbitrarily big multiplicities. The case $p=\infty$ adapts a clever trick by Berndtsson allowing to get uniform estimates for the solutions which are optimal in 
the (weighted) $L^2$ norm. 
 
Throughout the remaining part of the paper, we suppose that $\alpha=1$. The general case is dealt with in the same way. 
We define the measure 
$$
d\mu(z)=(1/\pi)e^{-|z|^2}dm(z). 
$$
Denote $\dFm=\mathcal{F}_1^\infty$, $\cFm=\mathcal{F}_1^2$. 
Furthermore, we use the following notations:
\begin{itemize}
\item $A\lesssim B$ means that there is an absolute constant $C$ such that $A \le CB$. 
\item $A\asymp B$  if both $A\lesssim B$ and $B\lesssim A$. 
\end{itemize}

\subsection*{Acknowledgments}
We are thankful to Philippe Charpentier, Daniel Pascuas and 
St\'ephane Rigat for helpful discussions. 
We are grateful to the referee for interesting comments. The last section of our paper answers one of his questions.

\section{Two geometrical results}
\label{secnew}


\subsection{An elementary result}

We start with a simple geometric lemma that  will allow us to deduce
Corollary \ref{thm0} from Theorems \ref{thm-int} and \ref{thm2}, and also play a r\^ole in
the proof of Theorem \ref{thm1} in the next section.

\begin{lemma}\label{Lem1}
Suppose that three discs $D_1=D(Z_1,r_1)$, $D_2=D(Z_2,r_2)$, $D_3=D(Z_3,r_3)$ satisfy the property 
$D_1\cap D_2\cap D_3\neq\varnothing$. Then there exist $1\le i,j\le 3$, $i\not=j$ such that
\begin{itemize}
\item[(a)] $r_i+r_j-|Z_i-Z_j|\ge c\cdot\min(r_i,r_j)$,
\item[(b)] $m(D_i\cap D_j)\ge c\cdot\min(r_i,r_j)^2$;
\end{itemize}
for some absolute constant $c>0$, with $m(A)$ being the area of $A$.
\end{lemma}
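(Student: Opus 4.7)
The plan is to reduce the three-disc statement to a two-disc one by a pigeonhole argument on directions. I begin by picking a common point $p \in D_1 \cap D_2 \cap D_3$, which forces $|Z_k - p| \le r_k$ for each $k$, and relabel so that $r_1 \le r_2 \le r_3$. Viewing the three vectors $Z_k - p$ as directions from $p$, the three circular angles at $p$ sum to $2\pi$, so two of these vectors must subtend an angle at most $2\pi/3$ at $p$. Fix such a pair of indices $(i,j)$, swapping them if necessary so that $r_i \le r_j$; then $\min(r_i, r_j) = r_i$.

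For part (a), set $a = |Z_i - p| \le r_i$, $b = |Z_j - p| \le r_j$, and let $\theta \le 2\pi/3$ be the angle at $p$. I split into two cases. If either $a \le r_i/2$ or $b \le r_j/2$, then the triangle inequality $|Z_i - Z_j| \le a + b$ directly yields $r_i + r_j - |Z_i - Z_j| \ge r_i/2$. Otherwise both $a, b$ are at least half of their respective radii, and the law of cosines together with $\cos\theta \ge -1/2$ gives $|Z_i - Z_j|^2 \le a^2 + b^2 + ab$, whence $(r_i+r_j)^2 - |Z_i - Z_j|^2 \ge 2 r_i r_j - ab \ge r_i r_j$. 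Since $r_i + r_j + |Z_i - Z_j| \le 4 r_j$, this factors to $r_i + r_j - |Z_i - Z_j| \ge r_i/4$.

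For part (b), I construct an explicit disc inscribed in $D_i \cap D_j$. If $|Z_i - Z_j| \le r_j - r_i$, then $D_i \subset D_j$ and the conclusion is immediate. Otherwise, set $\rho = (r_i + r_j - |Z_i - Z_j|)/2$ and let $Q$ be the point on the segment $[Z_i, Z_j]$ with $|Q - Z_i| = r_i - \rho$ and $|Q - Z_j| = r_j - \rho$. A one-line check confirms $D(Q, \rho) \subset D_i \cap D_j$. Combined with part (a), which yields $\rho \gtrsim r_i$, this gives $m(D_i \cap D_j) \ge \pi \rho^2 \gtrsim r_i^2 = \min(r_i, r_j)^2$.

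The main obstacle is simply arranging the pigeonhole and the inscribed-disc construction so that the constants come out uniformly in the ratio $r_j/r_i$; once the right pair of indices is selected, the calculations are elementary. A minor subtlety is that the angle at $p$ is undefined if some $Z_k$ coincides with $p$, but in that case $|Z_k - Z_l| = |p - Z_l| \le r_l$ for either of the other two discs, which immediately delivers both conclusions with constant $c = 1$.
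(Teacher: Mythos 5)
Your proof is correct, and its opening move coincides with the paper's: pick a common point $p\in D_1\cap D_2\cap D_3$ and use the pigeonhole principle on the three angles at $p$ to single out a pair of discs whose centers subtend an angle $\theta\le 2\pi/3$. Where the two arguments part ways is in how they extract (a) and (b) from this pair. The paper passes to the two intersection points $W_1,W_2$ of the boundary circles $\partial D_i$ and $\partial D_j$ and argues that one of the central angles $\angle W_1Z_iW_2$, $\angle W_1Z_jW_2$ is at least $\pi/3$, from which $|W_1-W_2|\ge r_i$ and then (a), (b) are said to ``follow immediately''; this is quite terse and leaves some plane geometry (the relation between $\angle Z_iZZ_j$ and the central angles, and the passage from $|W_1-W_2|\ge r_i$ to the two conclusions) to the reader. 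You instead run the law of cosines with $\cos\theta\ge -1/2$ to get $|Z_i-Z_j|^2\le a^2+b^2+ab$ and hence $r_i+r_j-|Z_i-Z_j|\ge r_ir_j/(4r_j)=r_i/4$, and then produce an explicit disc $D(Q,\rho)$ with $\rho=\tfrac12(r_i+r_j-|Z_i-Z_j|)$ inscribed in $D_i\cap D_j$; this makes the constants fully explicit and uniform in $r_j/r_i$ and avoids reasoning about the mutual position of $Z$, the chord $W_1W_2$, and the two centers. Two small remarks: your Case~1 in part (a) (when $a\le r_i/2$ or $b\le r_j/2$) is in fact subsumed by the Case~2 computation, which never uses the complementary hypothesis, so the case split can be dropped; and in the degenerate situation $Z_k=p$ the conclusion for (b) still passes through the inscribed-disc (or containment) argument rather than being literally immediate with $c=1$, though the estimate of course holds.
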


\begin{proof}
The proof is based on elementary geometry. Take $Z\in D_1\cap D_2\cap D_3$. 
We can assume that each pair of discs intersects at exactly two different
points. One of the angles between the vectors $\overrightarrow{ZZ_1}$, $\overrightarrow{ZZ_2}$, $\overrightarrow{ZZ_3}$ is at most $2\pi/3$. 
Without loss of generality we can assume that $\angle Z_1ZZ_2\le 2\pi/3$. Denote by $W_1,W_2$ two points of intersection of $\partial D_1$ and 
$\partial D_2$. Either $\angle W_1Z_1W_2$ or $\angle W_1Z_2W_2$ is at least $\pi/3$. Without loss of generality we can assume that $\angle W_1Z_1W_2\ge \pi/3$.

Now, either $Z_1\in D_2$ or $|W_1-W_2|\ge r_1$. In both situations, (a) and (b) follow immediately.
\end{proof}

We are now ready to prove Corollary~\ref{thm0}. For this,
suppose that there is a divisor $X= \{(\lambda,m_\lambda)\}_{\lambda\in\Lambda}$ 
which is simultaneously interpolating and sampling and such that
$\lim_{|\lambda|\to\infty}m_\lambda=\infty$.   By Theorem \ref{thm-int}, there is
a constant $C_1$ such that the discs $D(\lambda,\sqrt{m_\lambda}-C_1)$ are pairwise disjoint so that 
for every $\lambda,\lambda'\in\Lambda$, $\lambda\not=\lambda'$, we have 
\begin{equation}
\sqrt{m_\lambda}+\sqrt{m_{\lambda'}}-|\lambda-\lambda'|\le 2C_1.
\label{xd7}
\end{equation}
Next, by Theorem \ref{thm2}, the discs $D(\lambda, \sqrt{m_{\lambda}+C_1})$ cover the whole plane,
so by Lemma~\ref{Lem1}~(a), we can find pairs $(\lambda,\lambda')$, $\lambda,\lambda'\in\Lambda$, $\lambda\not=\lambda'$ such that 
$$
2C_1+\sqrt{m_\lambda}+\sqrt{m_{\lambda'}}-|\lambda-\lambda'|\ge c\min(\sqrt{m_\lambda},\sqrt{m_{\lambda'}}),\,\, m_\lambda\to\infty,\, m_{\lambda'}\to\infty,
$$
which contradicts \eqref{xd7}. \qed

\subsection{Proof of Proposition \ref{lemd}}
Choose an increasing sequence of positive numbers $(R_n)_{n\ge 1}$, such that 
\begin{equation*}
\C\setminus D(R_n)\subset \bigcup_{m_\lambda>n^2} D(\lambda,\sqrt{m_\lambda}-n).
\qquad\qquad\qquad\qquad (U_n)
\end{equation*}
Iteratively, on the step $s\ge 1$, we remove from $\Lambda$ the subsets 
$$
\Lambda_s=\{\lambda\in\Lambda:|\lambda|>R_s+s,\,(s-1)^2\le m_\lambda<s^2\}.
$$
Then
$$
\lim_{\lambda\in \Lambda',\, |\lambda|\to \infty}m_\lambda=+\infty,
$$
where $\Lambda'=\Lambda\setminus\cup_{s\ge 1}\Lambda_s$. 
It remains only to verify that the conditions $(U_n)$ are still valid after each step $s$. Clearly, $(U_n)$, $n\ge s$, do not change.
Now, for $1\le n<s$ we have
\begin{equation}\label{t89}
\C\setminus D(R_n)\subset 
\Bigl(\bigcup_{m_\lambda>n^2,\,\lambda\notin\Lambda_s} D(\lambda,\sqrt{m_\lambda}-n)\Bigr)\cup
\Bigl(\bigcup_{\lambda\in\Lambda_s} D(\lambda,\sqrt{m_\lambda}-n)\Bigr).
\end{equation}
Observe that if $\lambda\in\Lambda_s$, then $m_\lambda<s^2$ and for every $z\in D(\lambda,\sqrt{m_{\lambda}})$ we have $|z|\ge |\lambda|-\sqrt{m_{\lambda}}>R_s+s-s=R_s$. Hence, 
\begin{equation}\label{t89a}
\bigcup_{\lambda\in\Lambda_s} D(\lambda,\sqrt{m_\lambda}-n)\subset \C\setminus D(R_s).
\end{equation}
By $(U_s)$, we have 
\begin{multline}\label{t89b}
\C\setminus D(R_s)\subset \bigcup_{m_\lambda>s^2} D(\lambda,\sqrt{m_\lambda}-s)\\ \subset \bigcup_{m_{\lambda}>s^2}D(\lambda,\sqrt{m_{\lambda}}-n)\subset\bigcup_{m_\lambda>n^2,\,\lambda\notin\Lambda_s} D(\lambda,\sqrt{m_\lambda}-n).
\end{multline}
Finally, \eqref{t89}--\eqref{t89b} yield 
$$
\C\setminus D(R_n)\subset\bigcup_{m_\lambda>n^2,\,\lambda\notin\Lambda_s} D(\lambda,\sqrt{m_\lambda}-n).\qquad\qquad \qed
$$

\section{Zero divisors}
\label{sec2}

\begin{lemma}\label{zero1}
If $X=\{(\lambda_k,m_k)\}_{k\ge 1}$ is a zero divisor for $\dFm $, then 
\[
2\int_{D(R)} \sum_{k\ge 1} \chi_{D(\lambda_k ,\sqrt{m_k})} (z)\log\frac{R}{|z|}dm(z)\le \pi R^2+O(1), \qquad R\to\infty.
\]
\end{lemma}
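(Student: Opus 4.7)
The idea is to test the zero-divisor condition against the subharmonic function $\log|f|$ via Jensen's formula, and then to convert the resulting point-mass inequality into the area inequality of the lemma using the mean-value property of the harmonic function $\log(R/|z|)$.

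Pick $f \in \dFm \setminus \{0\}$ vanishing to order $\ge m_k$ at each $\lambda_k$; after a small adjustment (dividing out a power of $z$, or replacing $f$ by a suitable Fock-space translate) we may assume $f(0) \ne 0$. The Fock bound $|f(z)| \lesssim e^{|z|^2/2}$ together with Jensen's formula yields
$$
\int_{D(R)} \log\frac{R}{|w|}\, d\mu_f(w) = \frac{1}{2\pi} \int_0^{2\pi} \log|f(Re^{i\theta})|\, d\theta - \log|f(0)| \le \frac{R^2}{2} + O(1),
$$
where $\mu_f$ denotes the zero-counting measure of $f$ (with multiplicity). Since $\mu_f \ge \sum_k m_k\, \delta_{\lambda_k}$, restricting the inequality to $\Lambda$ gives the point-mass estimate
$$
\sum_{|\lambda_k|<R} m_k \log\frac{R}{|\lambda_k|} \le \frac{R^2}{2} + O(1). \qquad (\ast)
$$

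By Fubini, the left-hand side of the lemma equals $2\sum_k J_k$ with $J_k := \int_{D(R) \cap D(\lambda_k, \sqrt{m_k})} \log(R/|z|)\, dm$. For each $k$ with $D(\lambda_k,\sqrt{m_k}) \subset D(R)$ avoiding $0$, harmonicity of $\log(R/|z|)$ on the disc and the mean-value property give the identity $J_k = \pi m_k \log(R/|\lambda_k|)$; for discs contained in $D(R)$ and containing $0$, the sub-mean-value property for the superharmonic function $\log(R/|z|)$ gives only the inequality $J_k \le \pi m_k \log(R/|\lambda_k|)$. Summing these ``interior'' contributions and invoking $(\ast)$ bounds that part of the LHS by $\pi R^2 + O(1)$.

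It remains to control the ``boundary'' discs $D(\lambda_k, \sqrt{m_k})$ that straddle $\partial D(R)$. For such a disc the decomposition
$$
J_k \le \pi m_k \log(R/|\lambda_k|) + \int_{D(\lambda_k, \sqrt{m_k}) \setminus D(R)} \log(|z|/R)\, dm,
$$
combined with $\log(|z|/R) \lesssim \sqrt{m_k}/R$ on that set (since the disc must intersect $D(R)$) and area at most $\pi m_k$, shows that the excess over the interior term is $O(m_k^{3/2}/R)$ per disc. The main obstacle I expect is summing these boundary excesses to $O(1)$: this should be handled by a dyadic decomposition over the sizes $\sqrt{m_k}$ combined with the Jensen-type bound $n_f(r) = O(r^2)$ applied to the annular neighborhoods of $\partial D(R)$, exploiting that a boundary index satisfies $\sqrt{m_k} > ||\lambda_k|-R|$ so that the total zero-counting in the relevant annuli is tightly controlled by the Fock growth of $f$.
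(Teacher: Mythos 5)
Your approach is genuinely different from the paper's, and unfortunately the difference is where the gap lies. The paper does not attempt to pass through a discrete Jensen inequality at radius $R$. Instead it \emph{redistributes mass at the level of potentials}: starting from $\log|f|$, which carries a point mass $2\pi m_k\delta_{\lambda_k}$ at each zero, it inductively replaces each point mass by the uniform measure of density $2$ on $D(\lambda_k,\sqrt{m_k})$ (which has the same total mass $2\pi m_k$ and also matches $\Delta s$ where $s(z)=\log\|f\|_\infty+|z|^2/2$). The resulting subharmonic function $h$ is squeezed between $\log|f|$ and $s$, and a single application of Green's formula on $D(R)$ to $h$ gives the stated bound: the boundary term is controlled because $h\le s$ pointwise on $\partial D(R)$, not because of any discrete bookkeeping. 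There is no interior/boundary splitting and hence no ``boundary excess'' to absorb.

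The step you flag as the ``main obstacle'' is a genuine gap, and it cannot be closed by a dyadic decomposition plus the Jensen bound $n_f(r)=O(r^2)$. The problem already occurs for a \emph{single} boundary disc. Jensen at radius $R$ gives no information at all about a zero of $f$ sitting exactly on $\partial D(R)$, yet such a zero can have multiplicity $m$ comparable to $R^2$ (Jensen applied at any larger radius still allows $m\lesssim R^2$). For such a disc $D(\lambda,\sqrt m)$ with $|\lambda|=R$, your estimate $J_k\lesssim m^{3/2}/R$ is correct, but with $m\asymp R^2$ this is $\asymp R^2$, not $O(1)$; the same is visible directly by scaling, since $J_k=R^2\int_{D(1,\sqrt{m}/R)\cap\D}\log(1/|w|)\,dm(w)$, and the integral on the right is a fixed positive constant once $\sqrt m/R$ is bounded below. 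Summing the excesses can therefore not yield $O(1)$; one term can already be $\Omega(R^2)$. Since your interior accounting already consumes the full budget $\pi R^2/2+O(1)$ via $(\ast)$, there is no room to absorb this. (The lemma is nevertheless true in such configurations, as the paper's proof shows, but only because the Jensen bound at radius $R$ is then far from saturated by the interior zeros --- a compensation your argument does not exploit, because it bounds the interior part and the boundary excess independently.) To make a Jensen-based route work one would have to capture this trade-off, e.g.\ by integrating Jensen's formula over a range of radii $\ge R$, but that is a nontrivial additional idea not present in your sketch.
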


\begin{proof} 
Assume that $f\in\dFm $ vanishes at $\lambda_k$ of order $m_k$, $k\ge 1$.
Then $\log|f|$ is a subharmonic function, not identically $-\infty$ and bounded above by 
\[
 s(z):=\log\|f\|_\infty+\frac{1}2 |z|^2\ .
\]
In an inductive argument, we will construct a new subharmonic function $h$ such that $\log|f|\le h \le s$ by redistributing the mass of $\Delta (\log|f|)$ of each $D(\lambda_k,\sqrt{m_k})$ uniformly on this disc. 

Set $h_0:=\log|f|$. Then 
\[
 h_0=m_1\log\Bigl(\frac{|z-\lambda_1|}{\sqrt{m_1}}\Bigr)+u_0,
\]
where $u_0$ is
subharmonic on $\C$ and harmonic in some small neighbourhood of
$\lambda_1$ (since the zeros of the entire function $f$ are isolated).
Define
\[
 v_1(z)=\left\{\begin{array}{ll}
  \displaystyle \frac{ |z-\lambda_1|^2-m_1}{2 } & 
      \text{if }z\in D(\lambda_1,\sqrt{m_1}) \\
      &\\
  \displaystyle m_1\log\Bigl(\frac{|z-\lambda_1|}{\sqrt{m_1}}\Bigl) & \text{otherwise}.
 \end{array}
 \right.
\]
Then $v_1\in\mathcal C^1(\C)$, it is harmonic outside $D(\lambda_1,\sqrt{m_1})$ and it has constant Laplacian $2$ in $D(\lambda_1,\sqrt{m_1})$. Hence the total mass of the measure $\Delta v_1$ on $\C$ is equal to 
$2m(D(\lambda_1,\sqrt{m_1}))=2\pi m_1$, which corresponds to the total mass of the measure 
$\Delta(m_1(\log|z-\lambda_1|/\sqrt{m_1}))=2\pi m_1\delta_{\lambda_1}$.

Now set $h_1=v_1+u_0$, and restart the procedure for $\lambda_2$, i.e. 
write 
\[
 h_1=m_2\log\Bigl(\frac{|z-\lambda_2|}{\sqrt{m_2}}\Bigr)+u_1.
\]
We construct $v_2$ as above and obtain $h_2=v_2+u_1$. 

Iterating this procedure we obtain a sequence of subharmonic functions $(h_n)_n$. We claim that, for every $z\in\C$,
$(h_n(z))_n$ is increasing and 
\[
 \log|f(z)|\le h_n(z)\le s(z).
\]
As above we will give the argument just for the first step. The rest will follow by induction.

Let us begin by showing that $h_1(z)\le s(z)$ for every $z\in\C$. This is clear when $z\notin D(\lambda_1,\sqrt{m_1})$, since for these $z$ we have not changed the function ($h_0=h_1$ outside $D(\lambda_1,\sqrt{m_1})$). This estimate holds also on $\partial D(\lambda_1,\sqrt{m_1})$.

Consider the function $w_1:=v_1+u_0-s$. Then $\Delta (v_1+u_0-|z|^2/2)=\Delta u_0\ge 0$ on $D(\lambda_1,\sqrt{m_1})$, 
since $u_0$ is subharmonic. Hence $w_1$ is subharmonic and it is non-positive on the boundary of $D(\lambda_1,\sqrt{m_1})$. Therefore, it is non-positive throughout this disc, yielding $h_1=v_1+u_0\le s$ also in 
$D(\lambda_1,\sqrt{m_1})$.

The fact that $h_0\le h_1$ is almost obvious. Again, there is nothing to prove for $z$ outside $D(\lambda_1,\sqrt{m_1})$. Inside
the disc it remains to estimate $m_1\log(|z-\lambda_1|/\sqrt{m_1})$ from above by $(|z-\lambda_1|^2-m_1)/2$, 
which is clear, since $r\to m_1 \log (r/\sqrt{m_1})$ is concave, $r\to (r^2-m_1)/2$ is convex, and these functions touch smoothly at $r=m_1$.

The pointwise limit $h$ of the sequence $(h_n)_n$ is still subharmonic (because the sequence is locally eventually stable)  
and by construction it is comprised between $\log|f|$
and $s(z)$. We also know that it is not identically equal to $-\infty$,
since $\log|f|$ is not.

Since $h(z)\le s(z)$, by  Green's formula,
\begin{align*}
 \int_{D(R)}\Delta h(z) \log \frac{R}{|z|}\,dm(z)
 &=-2\pi h(0)+\frac{1}{R}\int_{|z|=R}h(z)\,d|z|\\
 &\le 2\pi \Bigl(-h(0)+\log \|f\|_\infty +\frac{R^2}2 \Bigr).
\end{align*}
On the other hand, 
\[
\int_{D(R)}\Delta h(z) \log \frac{R}{|z|}\,dm(z)\ge 2\int_{D(R)} 
\sum_{k\ge 1} \chi_{D(\lambda_k,\sqrt{m_k})} (z)\log \frac{R}{|z|}\,dm(z),
\]
and the proof is complete.
\end{proof}

\subsection{Proof of Theorem \ref{thm1}}\label{eqAr}
Let $X=\{(\lambda,m_\lambda)\}_{\lambda\in\Lambda}$  be a zero divisor for 
$\dFm $ satisfying \eqref{ud1}. Denote $E=\{z\in\C:\sum_{\lambda \in\Lambda}\chi_{D(\lambda,\sqrt{m_\lambda})}(z)>1\}$. 
Choose $R>0$ such that $K\subset D(R)$.
Then, by Lemma \ref{zero1}, there exists $A>0$ such that
\begin{gather}
\frac{\pi R^2}{2}+A 
\ge \int_{D(R)} \sum_{\lambda \in \Lambda} \chi_{D(\lambda,\sqrt{m_\lambda})} (z)\log\frac{R}{|z|}dm(z)\nonumber\\
=\int_{D(R)} \log \frac{R}{|z|}dm(z)+ \int_{D(R)\cap E} 
 \Big[ \sum_{\lambda \in \Lambda} \chi_{D(\lambda,\sqrt{m_\lambda})} (z) -1\Big] \log \frac{R}{|z|}dm(z) +\nonumber \\
\qquad\qquad\qquad\qquad\qquad + \int_{D(R)\setminus E} \Big[ \sum_{\lambda \in \Lambda} \chi_{D(\lambda,\sqrt{m_\lambda})} (z) -1\Big] \log \frac{R}{|z|}dm(z)\nonumber\\
\ge\int_{D(R)} \log \frac{R}{|z|}dm(z)+ \int_{D(R)\cap E} 
 \log \frac{R}{|z|}dm(z) 
 -\int_K \log \frac{R}{|z|}dm(z) \nonumber\\
\ge \frac{\pi R^2}{2}+ \int_{D(R)\cap E} \log \frac{R}{|z|}dm(z)- m(K)\log R -c_1, \label{e7}
 \end{gather}
where $c_1$ depends only on the compact set $K$. 

By Lemma \ref{Lem1}~(b), 
the area of $E$ is infinite. Choose $R_0$ such that $m(E\cap D(R_0))\ge m(K)+1$.
For $R\ge R_0$ we have 
\begin{multline*}
 \int_{D(R)\cap E} \log \frac{R}{|z|}dm(z)\ge  
\int_{D(R_0)\cap E}\log \frac{R}{|z|}dm(z)\\ \ge (m(K)+1)\log\frac{R}{R_0}\ge 
 (m(K)+1)\log R-c_2, 
\end{multline*}
with  $c_2$ depending only on  $R_0$. By \eqref{e7} we get a contradiction, and the proof is completed.
\qed

\section{Local $L^2$ estimates and the finite overlap condition} 
\label{sec3}

Given $x\ge 0$ and $k\in\mathbb Z_+$ we define
\begin{gather*}
\sigma_k(x)=\frac{1}{k!}\int_0^x y^ke^{-y}\,dy,\\
\omega_k(x)=e^{-x}\sum_{s=0}^k\frac{x^s}{s!}.
\end{gather*}
Then $\lim_{x\to+\infty}\sigma_k(x)=\lim_{k\to\infty}\omega_k(x)=1$. Integration by parts gives $\sigma_k(x)+\omega_k(x)=1$.

The following estimates on partial sums of exponentials (or the incomplete Gamma function) will be useful in what follows. 

\begin{lemma}\label{l1}
\begin{itemize}
\item [(a)] Given $t\ge 0$, there exist $\varepsilon>0$ and $k_0>0$ such that for every $k\ge k_0$
\[
\sigma_k(k-t\sqrt{k})\ge \varepsilon.
\]
\item [(b)] Given $t\ge 0$, there exists $\varepsilon>0$ such that for every $k\ge 0$
\[
\omega_k(k+t\sqrt{k})\ge \varepsilon.
\]
\item [(c)] Given $\varepsilon>0$, there exist $t>0$ such that
for every $k\ge t^2$
\[
\sigma_k(m-t\sqrt{m})\le \varepsilon \sigma_k(m),\qquad t^2\le m\le k.
\]
\end{itemize}
\end{lemma}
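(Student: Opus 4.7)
For (a) and (b), the plan is to combine the Central Limit Theorem (applicable via the probabilistic identities $\sigma_k(x) = \mathrm{Pr}(\Gamma_{k+1} \le x)$, where $\Gamma_{k+1}$ is a sum of $k+1$ i.i.d.\ $\mathrm{Exp}(1)$ variables, and $\omega_k(x) = \mathrm{Pr}(N_x \le k)$ with $N_x$ a $\mathrm{Poisson}(x)$ variable) with elementary positivity; the same limits can be verified directly via Laplace's method together with Stirling's formula. For (a), as $k \to \infty$ the normalized variable $(\Gamma_{k+1} - (k+1))/\sqrt{k+1}$ converges in distribution to a standard normal, and the threshold $(k - t\sqrt{k} - (k+1))/\sqrt{k+1}$ tends to $-t$, so $\sigma_k(k - t\sqrt{k}) \to \Phi(-t) > 0$ and exceeds $\Phi(-t)/2$ for $k$ past some $k_0$. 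For (b), analogously $\omega_k(k + t\sqrt{k}) \to \Phi(-t) > 0$; the finitely many $k < k_0$ each give a strictly positive value of $\omega_k(k + t\sqrt{k})$, so the minimum yields a uniform positive lower bound over all $k \ge 0$.

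For (c), I would argue directly and purely analytically. Set $a = m - t\sqrt{m}$. Since $y \mapsto y^k e^{-y}$ is increasing on $[0, k]$ (its derivative is $y^{k-1} e^{-y}(k - y) \ge 0$ there) and $m \le k$ places $[a, m]$ inside $[0,k]$, monotonicity immediately yields
\[
\int_a^m y^k e^{-y}\,dy \ge (m - a)\, a^k e^{-a} = t\sqrt{m}\, a^k e^{-a}.
\]
For the upper bound on $\int_0^a y^k e^{-y}\,dy$, the elementary inequality $\log r + 1 - r \le 0$ applied with $r = y/a$ gives $(y/a)^k e^{a - y} \le e^{-(a - y)(k - a)/a}$ for $0 < y \le a < k$, so
\[
\int_0^a y^k e^{-y}\,dy \le \frac{a^{k+1} e^{-a}}{k - a}.
\]
Using $k - a \ge m - a = t\sqrt{m}$ (from $k \ge m$), dividing gives
\[
\frac{\int_0^a y^k e^{-y}\,dy}{\int_a^m y^k e^{-y}\,dy} \le \frac{a}{(k - a)(m - a)} \le \frac{1}{t^2}.
\]
This transfers to $\sigma_k(a)/\sigma_k(m) \le 1/t^2$, so choosing $t \ge 1/\sqrt{\varepsilon}$ completes (c). The degenerate case $a = 0$ (i.e.\ $m = t^2$) is trivial, since then $\sigma_k(a) = 0$.

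I expect (c) to be the main obstacle, because the estimate must be uniform in the full range $t^2 \le m \le k$. This range encompasses both the regime $m \approx k$, where the mode of $y^k e^{-y}$ sits near the upper endpoint $m$, and the regime $m \ll k$, far from the mode. The inequality $re^{1-r} \le 1$ provides a clean log-concavity-type bound insensitive to the relative position of $m$ and $k$, so that the single estimate $1/t^2$ controls the ratio throughout. Parts (a) and (b), by contrast, are straightforward asymptotic facts once the distributional identities are noted.
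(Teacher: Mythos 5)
Your proofs of (a) and (b) via the probabilistic identities $\sigma_k(x)=\Pr(\Gamma_{k+1}\le x)$, $\omega_k(x)=\Pr(N_x\le k)$ and the CLT coincide with one of the alternatives the paper itself flags ("Yet another proof of parts (a) and (b) uses the Poisson law and its approximation by the standard normal law via the central limit theorem"); the normalization computations are correct, and the positivity of $\omega_k(k+t\sqrt k)$ for the finitely many small $k$ is handled appropriately.

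Part (c) is correct but takes a genuinely different route. The paper proves the \emph{pointwise} inequality $(y-t\sqrt y)^k e^{-(y-t\sqrt y)}\le\varepsilon\,y^k e^{-y}$ for $t^2\le y\le k$ via a series expansion of $\log\bigl(1-t/\sqrt y\bigr)$, and then integrates, using the change of variables $u=y-t\sqrt y$ together with the bound $1-t/(2\sqrt y)\ge\tfrac12$ on the Jacobian. You instead split at $a=m-t\sqrt m$ and compare $\int_0^a$ directly against $\int_a^m$: a lower bound on $\int_a^m$ from the monotonicity of $y^k e^{-y}$ on $[0,k]$, an upper bound on $\int_0^a$ from the elementary linearization $\log r\le r-1$ giving $(y/a)^k e^{a-y}\le e^{-(a-y)(k-a)/a}$, and then the chain $a/[(k-a)(m-a)]\le a/(m-a)^2\le 1/t^2$. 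The transfer to $\sigma_k(a)/\sigma_k(m)$ is immediate since $\sigma_k(m)\ge\int_a^m y^k e^{-y}\,dy/k!$. Your version is arguably cleaner: it avoids the substitution and its Jacobian bookkeeping, produces the explicit constant $1/t^2$ (so $t=1/\sqrt\varepsilon$ works directly, with no factor of $2$), and localizes all the work to a single log-concavity estimate that is insensitive to the relative position of $m$ within $[t^2,k]$. The paper's pointwise bound is slightly more information (it controls the integrand itself, not just the integral), though that extra strength is not used elsewhere.
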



\begin{proof} This lemma is essentially contained in \cite{T}; alternatively, one can use the Stirling formula to evaluate the sums $\omega_k(x)$. 
Yet another proof of parts (a) and (b) uses the Poisson law and its approximation by the standard normal law via the central limit theorem. 

To prove (c), we will check that for every $\varepsilon>0$, there exist $t>0$ satisfying 
\begin{equation}
(y-t\sqrt{y})^ke^{-(y-t\sqrt{y})}\le \varepsilon y^ke^{-y},\qquad t^2\le y\le k.
\label{qm}
\end{equation}
Assuming for the moment \eqref{qm}, and integrating from $t^2$ to $x\in[t^2,k]$, we get 
\begin{align*}
\int_{0}^{x-t\sqrt{x}}u^ke^{-u}\,\frac{du}{1-t/(2\sqrt{y})}&=\int_{t^2}^{x}(y-t\sqrt{y})^ke^{-(y-t\sqrt{y})}\,dy\\ &\le \varepsilon \int_{t^2}^{x}y^ke^{-y}\,dy,
\end{align*}
where $u=y-t\sqrt{y}$. Since $y\ge t^2$, we have $1-t/(2\sqrt{y})\ge \frac12$, and hence,
$$
\int_{0}^{x-t\sqrt{x}}u^ke^{-u}\,du\le 2\varepsilon \int_{t^2}^{x}y^ke^{-y}\,dy,
$$
which completes the proof.

To verify \eqref{qm}, we use that 
\begin{align*}
t\sqrt{y}+k\log\bigl(\frac{y-t\sqrt{y}}{y}\bigr)&=t\sqrt{y}-\sum_{j\ge 1}\frac{kt^j}{jy^{j/2}}\\&=-(k-y)\frac{t}{\sqrt{y}}-\frac{kt^2}{2y}-\sum_{j\ge 3}\frac{kt^j}{jy^{j/2}}\le -\frac{t^2}{2}\\
&\le \log \varepsilon
\end{align*}
for $t^2\le y\le k$ and $t\ge \sqrt{2\log(1/\varepsilon)}$.
\end{proof}

\begin{lemma}\label{eqsomme} For every $A\ge 0$ 
there exists $C(A)>0$, $n(A)>0$ such that for every $f\in \cFm$, $n\ge n(A)$, $\lambda\in\C$, we have 
\[
\sum_{k=0}^{n-1}|\langle f,T_{\lambda} e_k  \rangle |^2
\le C(A)\int_{D(\lambda,\sqrt{n}-A)}|f(z)|^2\,d\mu(z).
\]
\end{lemma}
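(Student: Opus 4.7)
The plan is to exploit the unitarity of $T_\lambda$ to reduce to $\lambda=0$ and then recognize both sides explicitly in terms of the Taylor coefficients, whereupon the lemma reduces to a uniform lower bound on the partial exponential sums $\sigma_k$.

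Set $g = T_{-\lambda} f$. Since $T_\lambda$ is unitary on $\cFm$, we have $\langle f, T_\lambda e_k\rangle = \langle g, e_k\rangle$. A direct computation from $T_{-\lambda}f(\zeta) = e^{-\bar\lambda\zeta - |\lambda|^2/2} f(\zeta+\lambda)$ gives $|g(\zeta)|^2 e^{-|\zeta|^2} = |f(\zeta+\lambda)|^2 e^{-|\zeta+\lambda|^2}$, so the change of variables $z = \zeta+\lambda$ yields $\int_{D(\sqrt n - A)}|g|^2\,d\mu = \int_{D(\lambda,\sqrt n - A)}|f|^2\,d\mu$. Thus it suffices to prove, for every $g\in\cFm$,
\[
\sum_{k=0}^{n-1} |\langle g, e_k\rangle|^2 \le C(A) \int_{D(\sqrt n - A)} |g(\zeta)|^2\, d\mu(\zeta).
\]

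Writing $g(z) = \sum_k a_k z^k$, we have $\langle g,e_k\rangle = a_k\sqrt{k!}$; passing to polar coordinates, using the orthogonality $\int_0^{2\pi} e^{i(k-j)\theta}\,d\theta = 2\pi\delta_{kj}$ and the substitution $u = r^2$, one finds
\[
\int_{D(R)} |g|^2\, d\mu = \sum_{k\ge 0} |a_k|^2 \int_0^{R^2} u^k e^{-u}\,du = \sum_{k\ge 0} |a_k|^2 k!\,\sigma_k(R^2).
\]
Since the left side of the target inequality is $\sum_{k=0}^{n-1} |a_k|^2 k!$, with $R = \sqrt n - A$ the desired estimate is implied by the uniform lower bound
\[
\sigma_k\bigl((\sqrt n - A)^2\bigr)\ge c(A)>0,\qquad 0\le k\le n-1,\ n\ge n(A).
\]

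To obtain this lower bound, fix $t = t(A)$ large enough (for instance $t = 3A+1$) and apply Lemma \ref{l1}(a) to produce $\varepsilon>0$ and $k_0$ with $\sigma_k(k - t\sqrt k)\ge\varepsilon$ for all $k\ge k_0$. An elementary verification shows that, for $n$ large enough depending on $A$, the inequality $(\sqrt n - A)^2 \ge k - t\sqrt k$ holds for every $0\le k\le n-1$; indeed the function $k \mapsto k - t\sqrt k$ achieves its maximum on $[0,n-1]$ at the endpoints, and at $k = n-1$ this reduces to the asymptotic $t\sqrt{n-1} \ge 2A\sqrt n - A^2 - 1$ which is valid for large $n$ since $t > 2A$. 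Combined with the monotonicity of $\sigma_k$ this gives $\sigma_k((\sqrt n - A)^2)\ge\varepsilon$ for $k_0 \le k \le n-1$. For the finitely many indices $0\le k<k_0$, one uses $\sigma_k(x)\to 1$ as $x\to\infty$ to bound $\sigma_k((\sqrt n - A)^2) \ge 1/2$ once $n$ is sufficiently large. Taking $C(A) = 2/\min(\varepsilon, 1/2)$ completes the argument. There is no genuine obstacle here; the only mildly delicate bookkeeping is choosing $t$ in terms of $A$ so that the range $k\le n-1$ is covered by Lemma \ref{l1}(a).
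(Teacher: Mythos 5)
Your proof is correct and follows essentially the same route as the paper's: reduce to $\lambda=0$ via the unitary $T_\lambda$, expand in the monomial/Fock basis to reduce the inequality to a uniform lower bound $\sigma_k((\sqrt n - A)^2)\ge c(A)$ for $0\le k\le n-1$, and then invoke Lemma~\ref{l1}(a) for large $k$ plus $\sigma_k(x)\to 1$ for the finitely many small $k$. The only cosmetic differences are that you work with Taylor coefficients rather than Fock-basis coefficients and take $t=3A+1$ where the paper uses $t=2A$; both are fine.
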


\begin{proof}
Let $g=T_{-\lambda}f$. Our statement is equivalent to 
\begin{equation}\label{pro}
\sum_{k=0}^{n-1}|\langle g, e_k  \rangle |^2\lesssim\int_{D(\sqrt{n}-A)}|g(z)|^2\,d\mu(z),
\end{equation}

Let 
$$
g=\sum_{k\ge 0}a_ke_k.
$$
We use that $e_k$ are mutually orthogonal with respect to $\chi_{D(R)}d\mu$ and 
\begin{equation}\label{norm}
\int_{D(R)}|e_k(z)|^2\,d\mu(z)=\sigma_k(R^2).
\end{equation}
Therefore, \eqref{pro} can be rewritten as 
\begin{equation}\label{dd6}
\sum_{k=0}^{n-1}|a_k|^2\lesssim\sum_{k\ge0} \sigma_k((\sqrt{n}-A)^2)|a_k|^2.
\end{equation}
By Lemma~\ref{l1}~(a), for $n> k\ge k_0(A)$, we have 
$$
\sigma_k((\sqrt{n}-A)^2)\ge \sigma_k(k-2A\sqrt{k})\ge \varepsilon(A)^2>0.
$$
For every $k$ such that $0\le k<k_0(A)$, $n\ge n(A,k)$ we have
$$
\sigma_k((\sqrt{n}-A)^2)\ge   \frac12.
$$
Therefore, for $n\ge \max_{0\le k<k_0(A)}[n(A,k)]$ we obtain 
$$
\sigma_k((\sqrt{n}-A)^2)\ge   \min\Bigl(\frac12,\varepsilon(A)^2\Bigr)>0,\qquad k\ge 0,
$$
and \eqref{dd6} follows.
\end{proof}

\begin{lemma}\label{lem1eq}
Let $X=\{(\lambda,m_\lambda)\}_{\lambda\in\Lambda}$. The divisor $X$ satisfies the finite overlap condition if and only if there exists $C>0$ satisfying 
 \begin{equation}
 \label{1eq}
 \sum_{\lambda\in \Lambda}\sum_{k=0}^{m_\lambda-1}
 |\langle f,T_{\lambda} e_k  \rangle |^2\le C \|f\|^2_{2},\qquad f\in \cFm.
 \end{equation}
\end{lemma}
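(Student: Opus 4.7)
\emph{Plan.}
I will prove the two directions separately: necessity by inserting the normalized reproducing kernel into \eqref{1eq}, and sufficiency by applying Lemma~\ref{eqsomme} combined with Fubini and the overlap bound.

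For necessity, plug $f=\bk_{z_0}$ into \eqref{1eq}. Using the reproducing formula and writing $T_\lambda e_k(\zeta)=e^{\bar\lambda\zeta-|\lambda|^2/2}(\zeta-\lambda)^k/\sqrt{k!}$, a direct computation gives
$$
|\langle \bk_{z_0}, T_{\lambda} e_k\rangle|^2=e^{-|\lambda-z_0|^2}\frac{|\lambda-z_0|^{2k}}{k!},
$$
so the partial sum equals $\omega_{m_\lambda-1}(|\lambda-z_0|^2)$. Since $x\mapsto \omega_k(x)$ is decreasing and Lemma~\ref{l1}(b) with $t=1$ (combined with $\omega_0(1)=e^{-1}$ for the case $k=0$) yields a constant $\varepsilon_0>0$ with $\omega_k(k+1)\ge \varepsilon_0$ for all $k\ge 0$, whenever $z_0\in D(\lambda,\sqrt{m_\lambda})$ we have $\omega_{m_\lambda-1}(|\lambda-z_0|^2)\ge \omega_{m_\lambda-1}(m_\lambda)\ge \varepsilon_0$. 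Since $\|\bk_{z_0}\|_2=1$, \eqref{1eq} yields $\varepsilon_0\sum_\lambda \chi_{D(\lambda,\sqrt{m_\lambda})}(z_0)\le C$ for every $z_0\in\C$, which is the finite overlap condition with $S_X\le C/\varepsilon_0$.

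For sufficiency, fix $A=1$ and set $N_0=n(A)$ from Lemma~\ref{eqsomme}. Split the outer sum according to whether $m_\lambda\ge N_0$ or $m_\lambda<N_0$. In the first case Lemma~\ref{eqsomme} directly bounds each inner sum by $C(A)\int_{D(\lambda,\sqrt{m_\lambda})}|f|^2\,d\mu$, and Fubini with the overlap bound gives a total contribution $\le C(A)S_X\|f\|_2^2$. In the second case, majorize $\sum_{k=0}^{m_\lambda-1}$ by $\sum_{k=0}^{N_0-1}$ and apply Lemma~\ref{eqsomme} with $n=N_0$ to obtain
$$
\sum_{k=0}^{m_\lambda-1}|\langle f, T_{\lambda} e_k\rangle|^2\le C(A)\int_{D(\lambda,\sqrt{N_0}-A)}|f|^2\,d\mu.
$$
Since $D(\lambda,1)\subset D(\lambda,\sqrt{m_\lambda})$ when $m_\lambda\ge 1$, the finite overlap condition forces $\sum_\lambda \chi_{D(\lambda,1)}\le S_X$; a standard area-packing argument then bounds the number of $\lambda$ with $m_\lambda<N_0$ in any fixed disc of radius $\sqrt{N_0}-A$ by a constant $M=M(S_X,N_0)$, so this piece contributes $\le C(A)M\|f\|_2^2$. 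Adding the two pieces gives \eqref{1eq}.

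The main obstacle is the small-multiplicity part of sufficiency: Lemma~\ref{eqsomme} requires $n\ge n(A)$ and thus cannot be invoked with $n=m_\lambda$ when $m_\lambda$ is small. The fix is to uniformly enlarge $n$ to $N_0$ and to recover local finiteness of $\{\lambda:m_\lambda<N_0\}$ from the finite overlap condition on the larger discs $\{D(\lambda,\sqrt{m_\lambda})\}\supset\{D(\lambda,1)\}$ via the packing argument sketched above.
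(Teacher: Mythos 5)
Your proof is correct, and the overall structure mirrors the paper's: necessity by testing \eqref{1eq} against the normalized reproducing kernels $\bk_{z_0}=T_{z_0}1$ and invoking Lemma~\ref{l1}(b), sufficiency by Lemma~\ref{eqsomme} combined with Fubini and the overlap bound. In the necessity step you evaluate the inner sum exactly as $\omega_{m_\lambda-1}(\rho^2)$ and then use monotonicity of $\omega_k$ together with Lemma~\ref{l1}(b) for $t=1$; the paper instead truncates the sum to $0\le k\le\rho_\lambda^2$ and quotes Lemma~\ref{l1}(b) with $t=0$, which is essentially the same estimate (yours is in fact slightly cleaner, since the $t=0$ citation alone doesn't immediately bound $\omega_{[\rho^2]}(\rho^2)$ from below when $\rho^2>[\rho^2]$).

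The genuine difference is in the sufficiency step. The paper's proof consists of a single line, ``we just apply Lemma~\ref{eqsomme}''. That relies on the implicit observation that when $A=0$ one can take $n(0)=1$: in the proof of Lemma~\ref{eqsomme} one only needs $\sigma_k(n)\ge c>0$ for $0\le k<n$, and since $\sigma_k(n)\ge\sigma_k(k+1)$ with $\inf_{k\ge 0}\sigma_k(k+1)>0$ (by Lemma~\ref{l1}(a) plus finitely many explicit terms), the restriction $n\ge n(A)$ is vacuous for $A=0$. You instead took the black-box statement of Lemma~\ref{eqsomme} at face value — fixing $A=1$ and thus confronting the threshold $n\ge n(1)$ — and bridged the $m_\lambda<N_0$ case with a packing argument: the overlap condition on $D(\lambda,\sqrt{m_\lambda})\supset D(\lambda,1)$ yields a uniform bound $M(S_X,N_0)$ on the number of small-multiplicity points in any disc of radius $\sqrt{N_0}-1$, which controls the Fubini overlap for the enlarged discs $D(\lambda,\sqrt{N_0}-1)$. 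Both arguments are valid; yours is a bit longer but does not require re-opening the proof of Lemma~\ref{eqsomme}, whereas the paper's is terser but leans on an unstated refinement of that lemma's statement at $A=0$.
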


\begin{proof} 
Suppose that \eqref{1eq} holds. Given $z\in\C$, set $f=T_{z}1$; we have $\|f\|_2=1$. Next, 
\begin{align*}
|\langle f,T_\lambda e_k  \rangle|&=|\langle T_{z}1,T_\lambda e_k  \rangle|=
|\langle  e_0,T_{\lambda -z}e_k  \rangle|= |T_{\lambda -z}e_k  (0)|\\&=
\frac{1}{\sqrt{k!}}|\lambda-z|^ke^{-\frac{1}{2}|\lambda-z|^2}.
\end{align*}
Denote $\rho_\lambda=|\lambda-z|$. 
Then \eqref{1eq} implies that   
\begin{align*}
1=\|f\|_{2}^2&\gtrsim \sum_{\lambda\in \Lambda}\sum_{k=0}^{m_{\lambda}-1} \frac{\rho_\lambda^{2k}}{{k!}}e^{-\rho_\lambda^2}\\
&\ge   \sum_{\lambda\in \Lambda,\,\rho_\lambda^2<m_\lambda}  
e^{-\rho_\lambda^2}\sum_{0\le k\le \rho_\lambda^2}\frac{\rho_\lambda^{2k}}{{k!}}\\&=
\sum_{\lambda\in \Lambda}  \chi_{D(\lambda,\sqrt{m_\lambda})}(z)\omega_{[\rho_\lambda^2]}(\rho_\lambda^2), 
\end{align*}
where $[\rho_\lambda^2]$ is the integer part of $\rho_\lambda^2$. 
By Lemma~\ref{l1}~(b) with $t=0$ we conclude that
$$
1\gtrsim  \sum_{\lambda\in \Lambda}  \chi_{D(\lambda,\sqrt{m_\lambda})}(z).
$$

In the opposite direction, if $X$ satisfies the finite overlap condition, we just apply Lemma~\ref{eqsomme}. 
\end{proof}


\section{Proof of Theorem \ref{thm2}}
\label{sec4}

\subsection{Necessary condition} \label{sampnec2}
Let $X=\{(\lambda,m_\lambda)\}_{\lambda\in \Lambda}$ be a sampling divisor. By Lemma~\ref{lem1eq} it satisfies the finite overlap condition. Suppose that for every $C>0$ 
\[
 \bigcup_{\lambda\in \Lambda} D(\lambda,\sqrt{m_\lambda}+C)\not= \C.
\]
Then there exists a sequence $(z_n)_n\subset\C$ such that
\begin{equation}\label{tz3}
\rho_n:=\dist (z_n,\cup_{\lambda\in \Lambda} D(\lambda,\sqrt{m_\lambda}))   \longrightarrow\infty\ \textrm{as $n\to\infty$}.
\end{equation}
Set $f_n=T_{z_n}1$. 
By Lemma~\ref{eqsomme}, we have 
\begin{gather}
\sum_{
\lambda\in \Lambda
}\sum_{k=0}^{m_\lambda-1} |\langle f_n,T_\lambda e_k  \rangle|^2\lesssim  
 \sum_{\lambda\in \Lambda}\int_{D(\lambda,\sqrt{m_\lambda})} e^{ -|z_n-w|^2}dm(w)\notag\\
 \lesssim\int_{ \cup_{\lambda\in \Lambda} D(\lambda,\sqrt{m_\lambda})}e^{ -|z_n-w|^2}dm(w)
\lesssim \int_{|\zeta|\ge \rho_n} e^{- |\zeta|^2}dm(\zeta)\to 0,\label{tz5}
\end{gather}
and  $X$ cannot be sampling for $\cFm$, which contradicts the hypothesis.

\subsection{Sufficient condition}\label{sampsuff2}

Let us start with a local estimate.

\begin{lemma}\label{Lem3} Given $0<\eta\le 1$ there exists $a(\eta)<\infty$ such that 
if $f\in\cFm $, $m\ge a(\eta)^2$, and if
\begin{gather*}
\sum_{0\le k< m}|\langle f, e_k\rangle |^2\le \eta/2,\\
\int_{D(\sqrt{m})}|f(z)|^2\,d\mu(z)\le 1,
\end{gather*}
then
$$
\int_{D(\sqrt{m}-a(\eta))}|f(z)|^2\,d\mu(z)\le \eta.
$$
\end{lemma}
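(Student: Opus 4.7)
The plan is to expand $f$ in the orthonormal basis, writing $f=\sum_{k\ge 0}a_k e_k$, and translate both hypotheses and the desired conclusion into statements about the sequence $(|a_k|^2)$. Since the monomials $e_k$ remain mutually orthogonal with respect to $\chi_{D(R)}\,d\mu$ and satisfy $\int_{D(R)}|e_k|^2\,d\mu=\sigma_k(R^2)$ (the identity already used in the proof of Lemma~\ref{eqsomme}), the hypotheses read
\begin{equation*}
\sum_{0\le k<m}|a_k|^2\le \eta/2,\qquad \sum_{k\ge 0}|a_k|^2\sigma_k(m)\le 1,
\end{equation*}
and the goal is to bound $\sum_{k\ge 0}|a_k|^2\sigma_k\bigl((\sqrt{m}-a)^2\bigr)$ by $\eta$ for a well chosen $a=a(\eta)$.

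I would split this sum at $k=m$. For the low range $0\le k<m$ the bound is immediate, since $\sigma_k\le 1$ and the first hypothesis gives
\begin{equation*}
\sum_{0\le k<m}|a_k|^2\sigma_k\bigl((\sqrt{m}-a)^2\bigr)\le \sum_{0\le k<m}|a_k|^2\le \eta/2.
\end{equation*}
For the high range $k\ge m$ the key is Lemma~\ref{l1}~(c) applied with $\varepsilon=\eta/2$: it furnishes $t=t(\eta)>0$ such that $\sigma_k(m-t\sqrt{m})\le (\eta/2)\,\sigma_k(m)$ whenever $t^2\le m\le k$. Choosing $a(\eta):=t$, a direct check yields $(\sqrt{m}-a)^2=m-2a\sqrt{m}+a^2\le m-t\sqrt{m}$ provided $m\ge a^2$, which is exactly the size assumption in the statement. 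By monotonicity of $\sigma_k$,
\begin{equation*}
\sigma_k\bigl((\sqrt{m}-a)^2\bigr)\le \sigma_k(m-t\sqrt{m})\le \frac{\eta}{2}\,\sigma_k(m),\qquad k\ge m,
\end{equation*}
so summing against $|a_k|^2$ and using the second hypothesis gives
\begin{equation*}
\sum_{k\ge m}|a_k|^2\sigma_k\bigl((\sqrt{m}-a)^2\bigr)\le \frac{\eta}{2}\sum_{k\ge m}|a_k|^2\sigma_k(m)\le \frac{\eta}{2}.
\end{equation*}
Adding the two contributions yields $\int_{D(\sqrt{m}-a)}|f|^2\,d\mu\le \eta$, as desired.

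In short, everything is essentially bookkeeping once one recognises that the conclusion is a weighted $\ell^2$ statement about the Taylor coefficients of $f$; Lemma~\ref{l1}~(c) supplies exactly the geometric contraction $\sigma_k((\sqrt{m}-a)^2)\ll \sigma_k(m)$ in the tail $k\ge m$ that allows the large-$k$ contribution to be absorbed by the $L^2$ hypothesis on $D(\sqrt{m})$. The only mildly delicate point is aligning the parameters: one must pick $a$ so that the shrinkage $(\sqrt{m}-a)^2$ is dominated by $m-t\sqrt{m}$, which forces $a\ge t$ and $m\ge a^2$, matching the hypothesis of the lemma.
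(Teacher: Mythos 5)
Your proof is correct and follows the same route as the paper: expand $f$ in the orthonormal basis $(e_k)$, use the identity $\int_{D(R)}|e_k|^2\,d\mu=\sigma_k(R^2)$ to turn everything into weighted $\ell^2$ statements about the coefficients, handle $0\le k<m$ by $\sigma_k\le 1$ and the first hypothesis, and invoke Lemma~\ref{l1}~(c) to contract the tail $k\ge m$. The paper compresses the low-range estimate and the parameter alignment $(\sqrt{m}-a)^2\le m-t\sqrt{m}$ into a single "it remains only to verify" sentence, while you spell them out, but the argument is the same.
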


\begin{proof} Let
$$
f=\sum_{k\ge 0}a_ke_k.
$$
By \eqref{norm}, $(e_k/\sqrt{ \sigma_k(R^2)})_k$ is an orthonormal system with respect to the scalar product $\int_{D(R)} f\overline{g}d\mu$, and it remains
only to verify that 
$$
\sum_{k\ge m}|a_k|^2\sigma_k((\sqrt{m}-a(\eta))^2)\le (\eta/2)\sum_{k\ge m}|a_k|^2\sigma_k(m).
$$
This inequality follows from Lemma~\ref{l1}~(c).
\end{proof}

Now we are ready to complete the sufficiency part of the proof of Theorem~\ref{thm2}.
Suppose that there exists a sequence $(f_n)_{n\ge 1}$ such that $\|f_n\|_{2}=1$ and 
$$
 \sum_{\lambda\in \Lambda}\sum_{k=0}^{m_\lambda-1}
 |\langle f_n,T_{\lambda} e_k  \rangle |^2\to  0\quad\textrm{as $n\to\infty$}.
$$

Passing to a weakly convergent subsequence denoted again by $(f_n)_{n\ge 1}$ we have two possibilities: either 
(A) $f_n$ converge weakly to $f\not=0$ or (B) $f_n$ converge weakly to $0$. \\

(A):  In this case $X$ is a zero divisor for $f\in\cFm$. Since $\cFm\subset\dFm$, we obtain, by Theorem~ \ref{thm1}, that  
$\mathbb C\setminus \cup_{\lambda\in\Lambda} D(\lambda,\sqrt{m_\lambda})$ cannot be compact, thus contradicting the hypothesis.\\

(B): In this case we define $\eta=(S_X+1)^{-1}$. 
We set the constant $C=C(S_X)$ from the formulation of the theorem to be equal to $a(\eta)$ defined in Lemma~\ref{Lem3}. 
Denote $\Lambda_1=\{\lambda\in \Lambda:m_\lambda>a(\eta)^2\}$. 
Then take $B=B(\eta)$ such that 
$$
\C\setminus D(B)\subset \bigcup_{\lambda\in\Lambda_1}D(\lambda,\sqrt{m_\lambda}-a(\eta)) 
$$
and obtain 
\begin{gather*}
1=\int_{\C}|f_n(z)|^2\,d\mu(z)\\ \le \int_{D(B)}|f_n(z)|^2\,d\mu(z)+\sum_{\lambda\in\Lambda_1} \int_{D(\lambda,\sqrt{m_\lambda}-a(\eta))}|f_n(z)|^2\,d\mu(z).
\end{gather*}
Denote by $\Lambda_2$ the set of $\lambda\in\Lambda_1$ such that 
$$
\sum_{k=0}^{m_\lambda-1}
 |\langle f_n,T_{\lambda} e_k  \rangle |^2\le (\eta/2)\int_{D(\lambda,\sqrt{m_\lambda})}|f_n(z)|^2\,d\mu(z).
$$
By Lemma~\ref{Lem3} we obtain that
\begin{gather*}
1\le o(1)+\sum_{\lambda\in\Lambda_2} \int_{D(\lambda,\sqrt{m_\lambda}-a(\eta))}|f_n(z)|^2\,d\mu(z)\\+
\sum_{\lambda\in\Lambda_1\setminus \Lambda_2} \int_{D(\lambda,\sqrt{m_\lambda}-a(\eta))}|f_n(z)|^2\,d\mu(z)\\ \le 
o(1)+\eta\sum_{\lambda\in\Lambda_2} \!\!\int_{D(\lambda,\sqrt{m_\lambda})}|f_n(z)|^2\,d\mu(z)+(2/\eta)
\sum_{\lambda\in \Lambda_1\setminus \Lambda_2}\sum_{k=0}^{m_\lambda-1}|\langle f_n,T_{\lambda} e_k  \rangle |^2\\  \le o(1)+
\eta\sum_{\lambda\in\Lambda} \int_{D(\lambda,\sqrt{m_\lambda})}|f_n(z)|^2\,d\mu(z)\\ \le 
o(1)+\eta S_X\int_{\C}|f_n(z)|^2\,d\mu(z)=o(1)+\eta S_X,\qquad n\to\infty.
\end{gather*}

This contradiction completes the proof. \qed

\section{Proof of Theorem \ref{thm-int}}
\label{sec5}

\subsection{Sufficient condition}\label{intsuff2}
Suppose that the discs 
$D(\lambda,\sqrt{m_\lambda}+C_\Lambda)$, $\lambda\in\Lambda$ are 
pairwise disjoint for some $C_\Lambda>0$.

Let $v=(v_\lambda^{(k)})_{\lambda\in\Lambda,\,0\le k<m_\lambda}$ be a sequence with $\|v\|_2<\infty$ and 
let $(p_\lambda)_{\lambda\in \Lambda}$ be a sequence of polynomials such that
\[
 \langle p_\lambda, e_k\rangle=v_\lambda^{(k)},\qquad \lambda\in\Lambda,\, 0\le k<m_\lambda,
\]
and
\[
 \|p_\lambda\|_{\cFm/N^2_{0,m_\lambda}}=\|p_\lambda\|_\cFm,\qquad \sum_{\lambda\in \Lambda} \|p_\lambda\|^{2}_{\cFm/N^2_{0,m_\lambda}}<\infty.
\]

Set $Q_\lambda=T_\lambda p_\lambda$, $r_\lambda=\sqrt{m_\lambda}+C_\Lambda$, $D_\lambda= D(\lambda,\sqrt{m_\lambda})$, and 
$D'_\lambda= D(\lambda, r_\lambda)$, so that the discs $(D'_\lambda)_{\lambda\in\Lambda}$ are pairwise disjoint. 
Consider first the interpolating function
\[
F(z)=\sum_{\lambda\in \Lambda}Q_\lambda(z)\eta(|z-\lambda|-r_\lambda),
\]
where $\eta$ is a smooth cut-off function on $\mathbb R$ with 
\begin{itemize}
\item $\supp \eta\subset (-\infty,0]$,
\item $\eta\equiv1$ on $(-\infty,-C_\Lambda]$, 
\item $|\eta'|\lesssim 1$.
\end{itemize}

Note that $\supp F\subset \cup_{\lambda\in\Lambda} D'_\lambda$ and 
\[
\int_\CC |F(z)|^2e^{-|z|^2}dm(z)\lesssim 
\sum_\lambda  \int_{D'_\lambda  } |Q_\lambda  (z)|^2e^{-|z|^2}dm(z)\le  \sum_\lambda  \|Q_\lambda  \|_2^2<\infty.
\]
Furthermore, $F$ interpolates $(v_\lambda^{(k)})_{\lambda,k}$ on $X$, that is for every $\lambda\in\Lambda$, the function 
$$
F-\sum_{k=0}^{m_\lambda-1}v_\lambda^{(k)}T_\lambda e_k
$$
vanishes at $\lambda$ of order at least $m_\lambda$.

We search for a holomorphic interpolating function of the form $f=F-u$. This leads to the $\bar\partial$-equation $\overline{\partial}u=\overline{\partial}F$, which we solve using H\"ormander's result \cite[Theorem 4.2.1]{H}: if $\psi$ is a subharmonic function, then there exists $u$ such that $\overline{\partial} F=\overline{\partial} u$ and 
\begin{equation}
\int_\CC |u|^2e^{-\psi}{dm}\le 4\int_\CC |\overline{\partial} F|^2e^{-\psi}\frac{dm}{\Delta\psi}.\label{dx1}
\end{equation}
We set $\psi(z)=|z|^2+v(z)$, where 
$$
v(z)=\sum_{\lambda\in\Lambda}m_\lambda  \Big[\log\frac{|z-\lambda |^2}{m_\lambda}+
1-\frac{|z-\lambda |^2}{m_\lambda}\Big]\chi_{D_\lambda  }(z).
$$

Since 
$$
\log(1-t)+t\le 0,\qquad 0\le t\le 1,
$$
we have $v(z)\le 0$. 
Next, $\supp \overline{\partial} F\subset \cup_{\lambda\in\Lambda} (D'_\lambda\setminus D_\lambda)$. 
Furthermore, for $z\in D'_\lambda\setminus D_\lambda$, $\lambda\in\Lambda$, we have $v(z)=0$ and 
\begin{equation}
\label{qm1}
 |\overline{\partial}F(z)|\lesssim |Q_\lambda  (z)|.
\end{equation}
A direct computation yields 
$$
\Delta v=-4+4\pi m_\lambda\delta_\lambda
$$
on $D_\lambda$, where $\delta_\lambda$ is the unit mass at $\lambda$. 
Therefore,
\begin{equation*}
\Delta \psi=4\pi m_\lambda\delta_\lambda
\end{equation*}
on $D_\lambda$.
Next, 
\begin{equation}
\label{vvv}
\Delta \psi=4\text{\ on\ }\mathbb C\setminus \bigcup_{\lambda\in\Lambda}D_\lambda. 
\end{equation}

By \eqref{dx1}--\eqref{vvv}, we obtain 
\begin{gather*}
\int_\CC |u|^2e^{-|z|^2}{dm}\le \int_\CC |u|^2e^{-\psi}{dm}\le 4\int_\CC |\overline{\partial} F|^2e^{-\psi}\frac{dm}{\Delta\psi}\\
\lesssim 
\sum_{\lambda\in\Lambda} \int_{D'_\lambda\setminus D_\lambda} |Q_\lambda  (z)|^2e^{-|z|^2}\,dm(z)
\lesssim \sum_\lambda   \|Q_\lambda  \|^{2}_{\cFm/N^2_\lambda}.
\end{gather*}
Thus, $f=F-u\in\cFm$. 

Finally, since in a neighbourhood of $\lambda$ holds the estimate $e^{-\psi(z)}\asymp e^{-v(z)}\asymp |z-\lambda|^{-2m_\lambda}$ and $u$ is analytic,  the bound
\[
 \int_\CC |u(z)|^2 e^{-\psi(z)}{dm(z)}<\infty
\]
forces $u$ to vanish at order $m_\lambda$ at $\lambda$, so that $f$ interpolates the values $(v_\lambda^{(k)})_{\lambda\in\Lambda,\,0\le k<m_\lambda}$, i.e.:
\[
 \langle f, T_\lambda e_k\rangle=\langle Q_\lambda, T_\lambda e_k\rangle=v_\lambda^{(k)},\qquad \lambda\in\Lambda,\, 0\le k<m_\lambda.
\]

\subsection{Necessary condition}\label{intnec2}
Let $X=\{(\lambda,m_\lambda)\}_{\lambda\in\Lambda}$ be an interpolating divisor and assume that 
the discs 
$\{D(\lambda,\sqrt{m_\lambda}-A)\}_{\lambda\in\Lambda,\,m_\lambda>A^2}$ are not separated for some $A>0$ to be chosen later on. Then there exist  
$\lambda,\lambda'\in\Lambda$ and $w\in\C$ such that 
$$
D(w,1)\subset D(\lambda,\sqrt{m_\lambda}-A+1)\cap D(\lambda',\sqrt{m_{\lambda'}}-A+1).
$$
Since $X$ is an interpolating divisor, there exists $f\in\cFm$ such that 
\begin{gather*}
f\in N^2_{\lambda,m_\lambda},\\ 
f-T_{w}1\in N^2_{\lambda',m_{\lambda'}},\\
\|f\|_{2}\le M_X.
\end{gather*}

By Lemma~\ref{Lem3},
\begin{multline*}
\int_{D(\lambda,\sqrt{m_{\lambda}}-A+1)}|f(z)|^2\,d\mu(z)+\int_{D(\lambda',\sqrt{m_{\lambda'}}-A+1)}|(f-T_{w}1)(z)|^2\,d\mu(z)\\=o(1)\cdot M_X^2,\qquad 
A\to\infty,
\end{multline*}
and hence,
\begin{multline*}
\int_{D(w,1)}|f(z)|^2\,d\mu(z)+\int_{D(w,1)}|(f-T_{w}1)(z)|^2\,d\mu(z)\\=o(1)\cdot M_X^2,\qquad 
A\to\infty.
\end{multline*}
On the other hand,
$$
\int_{D(w,1)}|(T_{w}1)(z)|^2\,d\mu(z)
$$
is a positive constant. This gives a contradiction when $A>A(M_X)$. \qed

\section{Proofs of Theorems~\ref{thm1infty} and \ref{thm2infty}}
\label{sec7}

Let us start with some technical lemmas. First of all, given $m\ge 1$, the function 
$\varphi(t)=\varphi_m(t)=t^2/2-m\log t$ decreases on 
$(0,\sqrt{m})$ and increases on $(\sqrt{m},+\infty)$.

\begin{lemma}\label{Lemt1} Given $m\ge 1$, 
\begin{itemize}
\item[(a)] for every $a>0$,  
$$
\varphi(\sqrt{m}+a)\le \varphi(\sqrt{m})+a^2,
$$
\item[(b)] for every $a>0$,   
$$
\varphi(\sqrt{m}-a)\ge \varphi(\sqrt{m})+a^2,\qquad m\ge a^2.
$$
\end{itemize}
\end{lemma}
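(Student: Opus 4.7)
The plan is to reduce everything to the Taylor expansion of $\varphi$ at its minimum point $t=\sqrt{m}$. First I would record the basic calculus: $\varphi'(t)=t-m/t$ and $\varphi''(t)=1+m/t^2$, so $\varphi'(\sqrt{m})=0$ and $\sqrt{m}$ is a strict minimum. The inequality $a^{2}$ appearing on the right-hand side is exactly what one gets from the second-order Taylor expansion using $\varphi''(\sqrt{m})/2 = 1$, so both (a) and (b) amount to one-sided comparisons of $\varphi''$ with the constant $2$.

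More concretely, I would write the second-order Taylor formula with integral remainder:
\[
\varphi(\sqrt{m}\pm a)-\varphi(\sqrt{m})=\int_{0}^{a}(a-u)\,\varphi''(\sqrt{m}\pm u)\,du,
\]
which is valid for $a\ge 0$ in the $+$ case and for $0\le a\le\sqrt{m}$ in the $-$ case (with the boundary case $a=\sqrt{m}$ giving $\varphi(0)=+\infty$, which makes (b) trivial). The key observation is the sign of $\varphi''(s)-2=m/s^{2}-1$: it is nonpositive for $s\ge\sqrt{m}$ and nonnegative for $0<s\le\sqrt{m}$.

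For part (a), plugging $s=\sqrt{m}+u\ge\sqrt{m}$ into the integrand gives $\varphi''(\sqrt{m}+u)\le 2$, so
\[
\varphi(\sqrt{m}+a)-\varphi(\sqrt{m})\le 2\int_{0}^{a}(a-u)\,du=a^{2}.
\]
For part (b), $s=\sqrt{m}-u\le\sqrt{m}$ gives $\varphi''(\sqrt{m}-u)\ge 2$ for $0\le u<\sqrt{m}$, and the same computation with the inequality reversed yields $\varphi(\sqrt{m}-a)-\varphi(\sqrt{m})\ge a^{2}$.

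There is essentially no obstacle here; the only thing to be mildly careful about is the domain constraint $m\ge a^{2}$ in (b), which is needed so that $\sqrt{m}-a\ge 0$ stays in the domain of $\log$, and the trivial extension to $a=\sqrt{m}$. As an alternative (equally short) route one could define $g(a)=\varphi(\sqrt{m}+a)-\varphi(\sqrt{m})-a^{2}$ and $h(a)=\varphi(\sqrt{m}-a)-\varphi(\sqrt{m})-a^{2}$ and differentiate once: a direct calculation gives $g'(a)=-a^{2}/(\sqrt{m}+a)\le 0$ and $h'(a)=a^{2}/(\sqrt{m}-a)\ge 0$, and since $g(0)=h(0)=0$ both conclusions follow immediately. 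I would probably present whichever of the two computations is shorter in context.
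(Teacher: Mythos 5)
Your proof is correct and follows essentially the same approach as the paper: both arguments hinge on $\varphi'(\sqrt{m})=0$ together with the comparison $\varphi''\lessgtr 2$ on the two sides of $\sqrt{m}$, and then integrate twice (the paper bounds $\varphi'(t)$ by $2|t-\sqrt{m}|$ first and then integrates once more, whereas you package the two integrations into the Taylor remainder formula). The one-derivative alternative you mention at the end, using the exact identities $g'(a)=-a^2/(\sqrt m + a)$ and $h'(a)=a^2/(\sqrt m - a)$, is a slightly slicker variant of the same idea.
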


\begin{proof} Since $\varphi'(\sqrt{m})=0$, $\varphi''(t)<2$, $t>\sqrt{m}$, and $\varphi''(t)>2$, 
$0<t<\sqrt{m}$, we have 
$$
\varphi(\sqrt{m}+a)-\varphi(\sqrt{m})=\int_{\sqrt{m}}^{\sqrt{m}+a}\varphi'(t)\,dt< 2\int_0^a s\,ds=a^2
$$
and
$$
\varphi(\sqrt{m}-a)-\varphi(\sqrt{m})=-\int^{\sqrt{m}}_{\sqrt{m}-a}\varphi'(t)\,dt> 2\int_0^a s\,ds=a^2,\quad m\ge a^2.
$$
\end{proof}

\begin{lemma}\label{Lemt2} Given $\delta>0$ there exist $\varepsilon,\eta>0$ such that if 
$f\in\dFm$, $\|f\|_{\infty}\le 1$, $m\ge \delta^2$,  and 
$\|f\|_{\dFm/N^\infty_{0,m}}<\varepsilon$, then 
$$
|f(z)|\le (1-\eta)e^{|z|^2/2},\qquad |z|\le \sqrt{m}-\delta.
$$
\end{lemma}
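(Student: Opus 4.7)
The plan is to exploit the fact that $\|f\|_{\dFm/N^\infty_{0,m}} < \varepsilon$ means there exists $g \in N^\infty_{0,m}$ with $\|f-g\|_\infty < \varepsilon$; then $f = g + (f-g)$ decomposes $f$ into a function $g$ vanishing to order $m$ at $0$ plus a remainder that is uniformly small (with respect to the weight $e^{-|z|^2/2}$). The whole task reduces to bounding $g$ in the disc $|z| \le \sqrt{m}-\delta$.

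First, I would record that $\|g\|_\infty \le \|f\|_\infty + \|f-g\|_\infty \le 1+\varepsilon$, i.e.\ $|g(z)| \le (1+\varepsilon) e^{|z|^2/2}$ for all $z$. Since $g$ vanishes to order $m$ at $0$, the function $G(z) := g(z)/z^m$ is entire. On the circle $|z|=\sqrt{m}$ we have, using the definition $\varphi_m(t) = t^2/2 - m\log t$,
\[
|G(z)| \le (1+\varepsilon)\frac{e^{|z|^2/2}}{|z|^m} = (1+\varepsilon)\, e^{\varphi_m(\sqrt{m})}.
\]
By the maximum principle applied to the entire function $G$ on the closed disc $\{|z|\le \sqrt{m}\}$, this bound propagates inside: $|G(z)| \le (1+\varepsilon)e^{\varphi_m(\sqrt{m})}$ for every $|z|\le \sqrt{m}$. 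Multiplying back by $|z|^m$ and by $e^{-|z|^2/2}$,
\[
|g(z)|\,e^{-|z|^2/2} \le (1+\varepsilon)\, e^{\varphi_m(\sqrt{m}) - \varphi_m(|z|)}, \qquad |z|\le \sqrt{m}.
\]

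Next, I bring in Lemma~\ref{Lemt1}(b): since $m\ge \delta^2$, we have $\varphi_m(\sqrt{m}-\delta) \ge \varphi_m(\sqrt{m}) + \delta^2$, and since $\varphi_m$ is decreasing on $(0,\sqrt{m})$, the inequality $\varphi_m(|z|) \ge \varphi_m(\sqrt{m})+\delta^2$ holds for every $z$ with $0 < |z|\le \sqrt{m}-\delta$ (the case $z=0$ being trivial because $g(0)=0$). Therefore
\[
|g(z)|\,e^{-|z|^2/2} \le (1+\varepsilon)\,e^{-\delta^2}, \qquad |z|\le \sqrt{m}-\delta,
\]
and combining with $|f(z)-g(z)|e^{-|z|^2/2}<\varepsilon$ gives
\[
|f(z)|\,e^{-|z|^2/2} \le (1+\varepsilon)e^{-\delta^2}+\varepsilon, \qquad |z|\le \sqrt{m}-\delta.
\]
It now suffices to pick $\varepsilon=\varepsilon(\delta)>0$ so small that $(1+\varepsilon)e^{-\delta^2}+\varepsilon \le 1-\eta$ for some $\eta=\eta(\delta)>0$ (for instance $\varepsilon < (1-e^{-\delta^2})/(2(1+e^{-\delta^2}))$ and $\eta := (1-e^{-\delta^2})/4$), concluding the proof.

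The only delicate point is the maximum-principle step: one must notice that $G=g/z^m$ is entire (this is the payoff of $g\in N^\infty_{0,m}$) and that the \emph{boundary} value of its modulus on $|z|=\sqrt{m}$ is exactly where $\varphi_m$ is minimized, so the circle $|z|=\sqrt{m}$ is the correct one to apply the maximum principle on. Everything else is a quantitative bookkeeping of $\varphi_m$ using Lemma~\ref{Lemt1}(b), which is tailor-made for this estimate.
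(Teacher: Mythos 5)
Your proof is correct and follows essentially the same path as the paper's: write $g = z^m h$ with $h$ entire (your $G$), use the a priori bound $|g(z)| \le (1+\varepsilon)e^{|z|^2/2}$ on the circle $|z|=\sqrt m$ (where $\varphi_m$ is minimized), propagate inward by the maximum principle, and invoke Lemma~\ref{Lemt1}(b) together with the monotonicity of $\varphi_m$ to absorb the loss by $e^{-\delta^2}$. The only cosmetic difference is the order in which the estimate is converted from a bound on $h$ to a bound on $ge^{-|z|^2/2}$; the mathematics is identical.
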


\begin{proof} Let $h$ be an entire function such
that the function $g:z\to z^mh(z)$ belongs to $\dFm$ and  
$$
 \|f-z^mh\|_{\infty}<\varepsilon.
$$
Then, using that $\|f\|_{\infty}\le 1$, we obtain 
$$
|z^mh(z)|\le |f(z)-z^mh(z)|+|f(z)|\le (1+\varepsilon) e^{|z|^2/2}, \qquad z\in \mathbb C.
$$
Hence,
$$
|h(z)|\le (1+\varepsilon)\exp\varphi(|z|),\quad z\in\C,
$$
where, as above, $\varphi=\varphi_m$.
Set $|z|=\sqrt m$ and use Lemma~\ref{Lemt1}~(b) to get
\[
 |h(z)|\le (1+\varepsilon) e^{\varphi(\sqrt m)-\varphi(\sqrt m-\delta)} e^{\varphi(\sqrt m-\delta)}\le (1+\varepsilon) e^{-\delta^2} e^{\varphi(\sqrt m-\delta)}.
\]
We will have 
\[
 |h(z)|\le (1-\varepsilon-\eta) e^{\varphi(\sqrt m-\delta)}, \qquad |z|=\sqrt m, 
\]
as soon as we choose $\varepsilon, \eta$ small enough so that
\[
 (1+\varepsilon) e^{-\delta^2}\le 1-\varepsilon-\eta.
\]
By the maximum principle, the above estimate holds then for all $|z|\le \sqrt m$. Using that $\varphi(|z|)\ge \varphi(\sqrt m -\delta)$ for $|z|\le \sqrt m-\delta$ we finally see that
$$
|h(z)|\le (1-\varepsilon-\eta)\exp\varphi(|z|),\qquad |z|\le \sqrt{m}-\delta, 
$$
and 
$$
|f(z)|\le |z^mh(z)|+\varepsilon e^{|z|^2/2}\le (1-\eta)e^{|z|^2/2},\qquad |z|\le \sqrt{m}-\delta. 
$$
\end{proof}

\begin{lemma}\label{Lemt2b} Given $\beta>0$, there exist $\varepsilon,\delta>0$ such that if 
$f\in\dFm$, $\|f\|_{\infty}\le 1$, $m\ge \delta^2$, and 
$\|f\|_{\dFm/N^\infty_{0,m}}<\varepsilon$, then 
$$
|f(z)|\le \beta e^{|z|^2/2},\qquad |z|\le \sqrt{m}-\delta.
$$
\end{lemma}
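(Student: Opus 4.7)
\medskip

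\noindent\textbf{Proof plan for Lemma~\ref{Lemt2b}.} The statement is a strengthening of Lemma~\ref{Lemt2}: there we fixed the distance $\delta$ and obtained a bound of the form $(1-\eta)e^{|z|^2/2}$, i.e.\ a universal multiplicative gain smaller than one. Now we wish the multiplicative constant to be as small as we please. The key observation is that the proof of Lemma~\ref{Lemt2} gave, before invoking the final constraint between $\varepsilon, \eta, \delta$, the bound
\[
|h(z)|\le (1+\varepsilon)e^{-\delta^{2}}e^{\varphi(\sqrt m-\delta)},\qquad |z|=\sqrt m,
\]
for the auxiliary entire function $h$ with $z^{m}h$ close to $f$. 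The decay factor $e^{-\delta^{2}}$ comes from Lemma~\ref{Lemt1}(b) and can be made arbitrarily small simply by taking $\delta$ large. This is precisely the room we need.

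\medskip

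\noindent Concretely, the plan is as follows. Given $\beta>0$, choose first $\delta>0$ so large that $2e^{-\delta^{2}}\le\beta/2$, and then $\varepsilon\in(0,\beta/2)$ so small that $(1+\varepsilon)e^{-\delta^{2}}\le\beta/2$. I will then follow verbatim the opening of the proof of Lemma~\ref{Lemt2}: take $h$ entire with $g(z)=z^{m}h(z)\in\dFm$ and $\|f-z^{m}h\|_{\infty}<\varepsilon$, and deduce from $\|f\|_{\infty}\le 1$ that
\[
|h(z)|\le(1+\varepsilon)\exp\varphi(|z|),\qquad z\in\C,
\]
where $\varphi=\varphi_{m}$. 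Evaluating on the circle $|z|=\sqrt m$ and applying Lemma~\ref{Lemt1}(b) with $a=\delta$ (legitimate since $m\ge\delta^{2}$) yields
\[
|h(z)|\le(1+\varepsilon)e^{-\delta^{2}}e^{\varphi(\sqrt m-\delta)}\le\tfrac{\beta}{2}\,e^{\varphi(\sqrt m-\delta)},\qquad |z|=\sqrt m.
\]

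\medskip

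\noindent By the maximum principle applied to the entire function $h$, the same bound holds on the whole disc $|z|\le\sqrt m$. Since $\varphi$ is decreasing on $(0,\sqrt m)$, for $|z|\le\sqrt m-\delta$ we have $\varphi(|z|)\ge\varphi(\sqrt m-\delta)$, and therefore
\[
|z^{m}h(z)|=|h(z)|\,e^{-\varphi(|z|)}e^{|z|^{2}/2}\le\tfrac{\beta}{2}\,e^{|z|^{2}/2}.
\]
Combining this with $|f(z)|\le|z^{m}h(z)|+|f(z)-z^{m}h(z)|\le|z^{m}h(z)|+\varepsilon e^{|z|^{2}/2}$ and the choice $\varepsilon<\beta/2$ gives the desired inequality $|f(z)|\le\beta e^{|z|^{2}/2}$ on $|z|\le\sqrt m-\delta$.

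\medskip

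\noindent There is no real obstacle here: the whole point is that Lemma~\ref{Lemt1}(b) provides an exponentially strong gain in $\delta$, whereas in Lemma~\ref{Lemt2} only a bounded gain was extracted because $\delta$ was fixed externally. The only minor care needed is to commit to the choice of $\delta$ before choosing $\varepsilon$, so that the two small parameters are selected in the correct logical order.
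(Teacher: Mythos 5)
Your argument is correct and is exactly the adaptation of the proof of Lemma~\ref{Lemt2} that the paper intends when it says ``we argue as in the proof of Lemma~\ref{Lemt2}'': you keep the same decomposition $f\approx z^m h$, the same appeal to Lemma~\ref{Lemt1}(b) and the maximum principle, and simply exploit the fact that the decay factor $e^{-\delta^2}$ can be made arbitrarily small by enlarging $\delta$, committing to $\delta$ before $\varepsilon$. No gap; the proposal matches the paper's (implicit) proof.
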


\begin{proof} We argue as in the proof of Lemma~\ref{Lemt2}. 
\end{proof}

\begin{lemma}\label{Lemt3}
Let $\delta>0$. 
If $m\ge 1$ and $f$ is a function analytic in
$D(R)$ and continuous up to $\partial D(R)$, $R=\sqrt{m}+\delta$, satisfying $f(0)=f'(0)=\ldots=f^{(m-1)}(0)=0$ and $|f(Re^{it})|\le e^{R^2/2}$, 
$t\in [0,2\pi]$, then 
\[
|f(z)|\le e^{\delta^2}e^{|z|^2/2},\qquad |z|\le R.
\]
\end{lemma}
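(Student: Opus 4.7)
The plan is to factor out the vanishing at the origin, apply the maximum principle to the quotient, and then extract the desired pointwise bound via the function $\varphi=\varphi_m$ together with Lemma~\ref{Lemt1}(a).

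First, since $f$ vanishes to order at least $m$ at $0$ and is analytic in $D(R)$, I would write $f(z)=z^{m}h(z)$ with $h$ analytic in $D(R)$ and continuous up to $\partial D(R)$. The hypothesis $|f(Re^{it})|\le e^{R^{2}/2}$ then gives
$$
|h(z)|\le \frac{e^{R^{2}/2}}{R^{m}}=\exp\!\bigl(\tfrac{R^{2}}{2}-m\log R\bigr)=e^{\varphi(R)},\qquad |z|=R.
$$
By the maximum principle applied to $h$ on $D(R)$, the same bound $|h(z)|\le e^{\varphi(R)}$ holds throughout $\overline{D(R)}$.

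Next, for $|z|\le R$, I would rewrite $|z|^{m}=\exp\bigl(|z|^{2}/2-\varphi(|z|)\bigr)$ and combine with the bound on $h$ to get
$$
|f(z)|=|z|^{m}|h(z)|\le \exp\!\bigl(\tfrac{|z|^{2}}{2}+\varphi(R)-\varphi(|z|)\bigr).
$$
Now the control of $\varphi(R)-\varphi(|z|)$ is where I invoke the structural facts recalled before Lemma~\ref{Lemt1}: $\varphi$ attains its minimum at $\sqrt{m}$, so $\varphi(|z|)\ge\varphi(\sqrt{m})$ for every $z$. Together with Lemma~\ref{Lemt1}(a) (with $a=\delta$), which gives $\varphi(R)=\varphi(\sqrt{m}+\delta)\le \varphi(\sqrt{m})+\delta^{2}$, this yields $\varphi(R)-\varphi(|z|)\le \delta^{2}$, hence $|f(z)|\le e^{\delta^{2}}e^{|z|^{2}/2}$ as desired.

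There is no real obstacle: the estimate is tight exactly because $\varphi(R)$ is compared to its minimum value, and this is why the constant comes out as $e^{\delta^{2}}$. The only things to be careful about are that we use the full order-$m$ vanishing to ensure $h$ is holomorphic (and not merely meromorphic) in $D(R)$ so that the maximum principle applies, and that the comparison $\varphi(|z|)\ge\varphi(\sqrt{m})$ is valid for all $|z|\le R$, which follows from the monotonicity of $\varphi$ on $(0,\sqrt{m})$ and $(\sqrt{m},+\infty)$ stated at the beginning of the section.
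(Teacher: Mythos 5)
Your proof is correct and follows essentially the same approach as the paper: factor $f(z)=z^{m}h(z)$, apply the maximum principle to $h$, and then combine $\varphi(R)\le\varphi(\sqrt{m})+\delta^{2}$ from Lemma~\ref{Lemt1}(a) with the fact that $\varphi$ is minimized at $\sqrt{m}$ to bound $\varphi(R)-\varphi(|z|)\le\delta^{2}$. The only difference is cosmetic: the paper works with $\log|g(z)|$ directly while you rewrite $|z|^{m}$ as $\exp(|z|^{2}/2-\varphi(|z|))$ before multiplying.
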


\begin{proof}
Let $f(z)=z^{m}g(z)$. The function $g$ is holomorphic in $D(R)$ and continuous up to $\partial D(R)$. By the maximum
principle, 
\[
 \log|g(z)|\le \varphi(R), \qquad |z|\le R.
\]
By Lemma~\ref{Lemt1}~(a), and since $\sqrt{m}$ is the point of minimum for $\varphi$, we have 
$$
\log|g(z)|\le \varphi(\sqrt{m})+\delta^2\le \varphi(|z|)+\delta^2, \qquad |z|\le R,
$$
and hence,
$$
|f(z)|\le e^{\delta^2}e^{|z|^2/2}, \qquad |z|\le R.
$$
\end{proof}

\subsection{Proof  of Theorem \ref{thm1infty}. Sufficient  condition}\label{sampsuffinfty}
Suppose
that there exists a sequence $(f_n)_n$ such that $\|f_n\|_{\infty}=1$,
$n\in\N$ and 
\begin{eqnarray*}
\sup_{\lambda\in\Lambda} \|f_n\|_{\dFm/N^\infty_\lambda}\to 0\quad\textrm{as $n\to\infty$}.
\end{eqnarray*}

Passing to a subsequence converging uniformly on compact subsets and denoted again by $(f_n)$ 
we have two possibilities: 
either (A) the sequence $(f_n)_n$ converges to $f\not=0$ or (B) the sequence $(f_n)$ converges to $0$. \\

(A):  In this case $X$ is a zero divisor for $\mathcal{F}^{\infty}$. Then, by Theorem~ \ref{thm1}, the set $\mathbb C\setminus \cup_{\lambda\in\Lambda} D(\lambda,\sqrt{m_\lambda})$ cannot be compact, thus contradicting the hypothesis.\\

(B): In this case we recall our assumption that for a compact set $K$ we have 
$$
\Omega=\bigcup_{\lambda\in\Lambda,\,m_\lambda>C^2}D(\lambda,\sqrt{m_\lambda}-C)=\mathbb C\setminus K.
$$
Next, since $(f_n)_n$ tends to $0$, 
$$
|f_n(z)|<\frac 12 e^{|z|^2/2},\qquad z\in K,\, n\ge n_0.
$$
On the other hand, applying Lemma~\ref{Lemt2} to $T_{-\lambda}f_n$ on $D(\sqrt{m_\lambda})$, $\lambda\in\Lambda$, we obtain that 
$$
|f_n(z)|<(1-\eta(C)) e^{|z|^2/2},\qquad z\in \Omega,\, n\ge n_0,
$$
Hence, $\|f_n\|_{\infty}<1$, $n\ge n_0$, and we arrive at a contradiction.

\subsection{Proof  of Theorem \ref{thm1infty}. Necessary condition}\label{sampnecinfty}
The argument is completely analogous to that in the proof of the necessity part of Theorem~\ref{thm2}. Once again, we choose $(z_n)_n$ satisfying \eqref{tz3} and set 
$f_n=T_{z_n}1$,
$$
g_{\lambda,n}=\sum_{k=0}^{m_\lambda-1}\langle f_n,T_\lambda e_k\rangle T_\lambda e_k,\qquad 
\lambda\in\Lambda.
$$
By \eqref{tz5}, 
$$
\sum_{\lambda\in\Lambda}\|g_{\lambda,n}\|^2_{2}\to 0,\qquad n\to\infty,
$$
and since $\|h\|_\infty\le C\|h\|_2$ for $h\in \cFm$, we have 
$$
\max_{\lambda\in\Lambda}\|g_{\lambda,n}\|_{\infty}\to 0,\qquad n\to\infty.
$$
Since $g_{\lambda,n}-f_n\in N^\infty_\lambda$, $\lambda\in\Lambda$, we obtain that $X$ cannot be sampling for $\dFm$ and 
our proof is completed.

\subsection{Proof of Theorem \ref{thm2infty}}

Here, we need an additional construction.

\begin{lemma}
\label{dd4}
Given $q,a\ge 1$, there exists a function $y_{q,a}\in \mathcal C^1(\mathbb C\setminus\{0\})$, $y_{q,a}(z)=y_{q,a}(|z|)$ such that 
$z\mapsto y_{q,a}(z)-2q^2\log|z|$ is $ \mathcal C^2$-smooth at $0$, 
$$
y_{q,a}(z)=|z|^2,\qquad |z|\ge q+a,
$$
and
\begin{equation}
\label{dd5}
\Delta y_{q,a}(z)\ge \frac{4a}{(q+2a-|z|)^2},\qquad |z|< q+a.
\end{equation}
\end{lemma}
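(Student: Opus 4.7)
The plan is to construct $y_{q,a}$ by first prescribing its radial Laplacian on $(0,q+a)$ and then integrating back. I look for $y_{q,a}$ of the form
$$
y_{q,a}(z)=\begin{cases}2q^2\log|z|+u(|z|),&|z|<q+a,\\ |z|^2,&|z|\ge q+a,\end{cases}
$$
where $u\in C^2([0,q+a])$ is radial with $u'(0)=0$, so that $u(|z|)$, and therefore $y_{q,a}(z)-2q^2\log|z|$, is $C^2$-smooth at $0$. The two matching conditions at $r=q+a$ give $u(q+a)=(q+a)^2-2q^2\log(q+a)$ and $u'(q+a)=2a(2q+a)/(q+a)$, and the Laplacian bound \eqref{dd5} becomes $u''(r)+u'(r)/r\ge 4a/(q+2a-r)^2$ on $(0,q+a)$.

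Setting $w(r)=ru'(r)$, the problem reduces to constructing a smooth $w$ on $[0,q+a]$ with $w(0)=0$, $w(q+a)=2a(2q+a)$, $w(r)=O(r^2)$ near the origin, and $w'(r)\ge 4ar/(q+2a-r)^2$ on $(0,q+a)$. The substitution $s=q+2a-r$ gives
$$
\int_0^{q+a}\frac{4ar}{(q+2a-r)^2}\,dr=4q+4a-4a\log\frac{q+2a}{a}.
$$
The crucial elementary inequality is
$$
2a(2q+a)\ge 4q+4a-4a\log\frac{q+2a}{a},\qquad q,a\ge 1,
$$
which rewrites as $4q(a-1)+2a(a-2)+4a\log((q+2a)/a)\ge 0$. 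For $a\ge 2$ all three terms are nonnegative; for $1\le a<2$ the middle term is at least $-2$ while $4a\log((q+2a)/a)\ge 4\log 3>2$, so the sum is still positive. With this verified I set
$$
w'(r)=\frac{4ar}{(q+2a-r)^2}+Cr,
$$
choosing $C\ge 0$ so that $\int_0^{q+a}w'(s)\,ds=2a(2q+a)$. Then $w'$ is smooth on $[0,q+a]$, vanishes at $0$, and dominates the required lower bound; integrating, $w(r)=\tfrac12 w''(0)r^2+O(r^3)$, so $u'(r):=w(r)/r$ extends smoothly to $r=0$ with $u'(0)=0$.

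Finally I integrate $u'$ backwards from $r=q+a$ to obtain $u$, define $y_{q,a}$ as above, and check the three conclusions: $y_{q,a}(z)=|z|^2$ for $|z|\ge q+a$ by construction; $C^1$-matching across $|z|=q+a$ holds because the integral condition on $w'$ is exactly the compatibility of $u'(q+a)$ with the exterior derivative $2(q+a)$; inside $D(q+a)$ the bound $\Delta y_{q,a}(r)=w'(r)/r\ge 4a/(q+2a-r)^2$ is immediate; and $C^2$-smoothness of $y_{q,a}-2q^2\log|z|=u(|z|)$ at $0$ follows from the standard fact that a $C^2$ function $u$ on $[0,\varepsilon)$ with $u'(0)=0$ produces a $C^2$ radial function on a neighborhood of $0$. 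The main obstacle is the compatibility inequality above, which encodes the tension between the pointwise lower bound on $\Delta y_{q,a}$ and the prescribed exterior behavior $|z|^2$; once this is verified, the remaining construction and regularity checks are routine.
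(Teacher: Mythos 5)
Your proof is correct, and it takes a genuinely different route from the paper. The paper constructs $y_{q,a}$ on the disc as the sum $|z|^2+g(z)+h(z)$, where $g$ is the (radial) solution of a Dirichlet problem with source $4\gamma(|z|)-4b/(\pi(q+a)^2)$ (chosen to have zero mean so that $\nabla g$ vanishes on $\partial D(q+a)$), and $h(z)=q^2[\log|z/(q+a)|^2+1-|z/(q+a)|^2]$ carries the logarithmic singularity; the lower bound \eqref{dd5} then reduces to the pointwise estimate $1-b/(\pi(q+a)^2)-q^2/(q+a)^2\ge0$, i.e.\ $a(2q+a)(q+2a)\ge2(q+a)^2$. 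You instead work directly with the radial ODE for $u$, substitute $w=ru'$ so that $\Delta y_{q,a}=w'(r)/r$, prescribe $w'(r)=\dfrac{4ar}{(q+2a-r)^2}+Cr$, and observe that the $C^1$ matching at $r=q+a$ forces $\int_0^{q+a}w'=2a(2q+a)$; the whole construction then hinges on the single scalar inequality $2a(2q+a)\ge4q+4a-4a\log\frac{q+2a}{a}$, which guarantees $C\ge0$. The two compatibility inequalities are in fact equivalent (your $C$ equals four times the paper's residual constant), so the approaches carry the same content, but yours makes the constraint explicit as a clean one-dimensional inequality rather than burying it in the choice of $b$. One small slip: for $1\le a<2$ you claim $\frac{q+2a}{a}\ge3$ to get the bound $4a\log\frac{q+2a}{a}\ge4\log3$; in fact $\frac{q+2a}{a}=\frac{q}{a}+2$ can be as small as $\tfrac52$ in that range (take $q=1$, $a\to2^-$), so the correct uniform bound is $4a\log\frac{q+2a}{a}>4\log\frac52\approx3.67$, which still exceeds $2$ and the argument goes through unchanged. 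Your regularity discussion (why $u\in C^2$ with $u'(0)=0$ makes $u(|z|)$ a $C^2$ radial function near $0$) is sound since $w'(r)$ is smooth up to $r=0$ and vanishes there to first order.
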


\begin{proof} Consider 
$$
\gamma(t)= \frac{a}{(q+2a-t)^2}
$$
and set 
\begin{equation}
\label{dd81}
b=\int_{D(q+a)}\gamma(|z|)\,dm(z)\le \frac{2\pi(q+a)^2}{q+2a}.
\end{equation}
Solve the Dirichlet problem 
$$
\left\{
\begin{aligned}
\Delta g(z)&=4\gamma(|z|)-\frac{4b}{\pi(q+a)^2},\qquad |z|< q+a,\\
g(z)&=0,\qquad |z|= q+a.
\end{aligned}
\right.
$$
Then $g(z)=g(|z|)$,
$$
\int_{D(q+a)}\Delta g(z)\,dm(z)=0,
$$
and by Green's formula, $\nabla g=0$ on $\partial D(q+a)$.

Next, set 
$$
h(z)=q^2\biggl[\log\Bigl|\frac{z}{q+a}\Bigr|^2+1-\Bigl|\frac{z}{q+a}\Bigr|^2\biggr], \qquad |z|\le q+a.
$$
Then $h=\nabla h=0$ on $\partial D(q+a)$, and
$$
\Delta h=-\frac{4q^2}{(q+a)^2}+4\pi q^2\delta_0 
$$ 
on $D(q+a)$.

Finally, we set
$$
y_{q,a}(z)=
\begin{cases}
|z|^2+g(z)+h(z), \qquad |z|\le q+a,\\
|z|^2, \qquad |z|> q+a.
\end{cases}
$$

It is clear that $y_{q,a}\in\mathcal C^1(\mathbb C\setminus\{0\})$ and 
$z\mapsto y_{q,a}(z)-2q^2\log|z|$ is $ \mathcal C^2$-smooth at $0$. 
It remains to verify \eqref{dd5}. In fact, using \eqref{dd81} and the fact that $q, a\ge 1$, we obtain 
\begin{align*}
\frac14\Delta y_{q,a}(z)\ge&  1+\gamma(|z|)-\frac{b}{\pi(q+a)^2}-\frac{q^2}{(q+a)^2}\\ \ge& \gamma(|z|)= \frac{a}{(q+2a-|z|)^2}, \qquad |z|< q+a.
\end{align*}

\end{proof}

\begin{proof}[Proof of Theorem \ref{thm2infty}] We start with the sufficiency part. 
Suppose that the discs 
$D(\lambda,\sqrt{m_\lambda}+6)$, $\lambda\in\Lambda$, are 
pairwise disjoint.
Denote $\Lambda=(\lambda_n)_{n\ge 1}$. 
Let $(\rho_n)_{n\ge 1}$ be a sequence of data with 
$\sup_{n\ge 1}\|\rho_n\|_{\infty}\le 1$. 
We want to find functions $f_N$ such that 
$$
f_N-\rho_n\in N^\infty_{{\lambda_n}},\qquad 1\le n\le N,
$$
and $\|f_N\|_{\infty}\le M$, $N\ge 1$.
A normal family argument will then complete the proof. 

Set $a=3$, 
$m_n=m_{\lambda_n}$, 
$r_n=\sqrt{m_n}+a$, $D_n= D(\lambda_n,r_n)$, 
$D'_n= D(\lambda_n, r_n+a)$, $D''_n= D(\lambda_n, r_n+a+1)$, 

\[
F_N(z)=\sum_{1\le n\le N}\rho_n(z)\tilde\eta(|z-\lambda_n|-r_n),
\]
with $\tilde\eta$ is a smooth cut-off function on $\mathbb R$ with 
\begin{itemize}
\item $\supp \tilde\eta\subset (-\infty,a]$,
\item $\tilde\eta\equiv1$ on $(-\infty,0]$, 
\item $|\tilde\eta'|\lesssim 1$.
\end{itemize}
Then 
$$
\supp \overline{\partial} F_N\subset Q_N=\bigcup_{1\le n\le N}D'_n\setminus D_n.
$$

Next, using Lemma~\ref{dd4}, we define 
$$
\psi_N(z)=|z|^2+\sum_{1\le n\le N}\Bigl[y_{q(n),a}(z-\lambda_n)-|z-\lambda_n|^2\Bigr],
$$
where $q(n)=\sqrt{m_n}$ (by construction, the function $y_{q(n),a}(z-\lambda_n)- |z-\lambda_n|^2$ vanishes outside $D_n$).
Then
$$
\left\{
\begin{aligned}
\Delta \psi_N(z)&=4,\qquad z\in \mathbb C\setminus \cup_{1\le n\le N} D_n,\\
\Delta \psi_N(z)&\ge  \frac{4a}{(\sqrt{m_n}+2a-|z-\lambda_n|)^2},\qquad z\in D_n,\, 1\le n\le N.
\end{aligned}
\right.
$$
Given $\zeta\in Y_N=\mathbb C\setminus \bigcup_{1\le n\le N}D''_n$, 
we define
$$
w_{N,\zeta}(z)=\frac{1}{|z-\zeta|^3+2a},\qquad z\in \mathbb C.
$$
Furthermore, we set
$$
\Omega_{N}=\frac12 \chi_{Q_N}.
$$
We have
$$
\frac{\Delta w_{N,\zeta}(z)}{w_{N,\zeta}(z)}=9\frac{|z-\zeta|(|z-\zeta|^3-2a)}{(|z-\zeta|^3+2a)^2}.
$$
An elementary calculation (consider separately the cases $t\le 8/3$ and $t>8/3$) shows that
$$
9\frac{t(t^3-2a)}{(t^3+2a)^2}\le 2,\qquad t\ge 0.
$$
Next,
$$
9\frac{t(t^3-2a)}{(t^3+2a)^2}\le \frac{9}{t^2},\qquad t\ge 0.
$$

Therefore,
$$
\frac{\Delta w_{N,\zeta}(z)}{w_{N,\zeta}(z)}\le \min\Bigl(2,\frac{4a}{|z-\zeta|^2}\Bigr), 
$$
and we obtain that on the whole plane, 
\begin{equation}
\Delta w_{N,\zeta}\le w_{N,\zeta}(\Delta \psi_N-4\Omega_N).
\label{d145}
\end{equation}
Indeed, for $z\in \mathbb C\setminus \cup_{1\le n\le N} D_n$ we have $\Delta \psi_N-4\Omega_N\ge 2$ and 
for $z\in D_n$, $1\le n\le N$, we have
$$
\frac{4a}{|z-\zeta|^2}\le \frac{4a}{(\sqrt{m_n}+2a-|z-\lambda_n|)^2}
$$
because $|\zeta-\lambda_n|\ge \sqrt{m_n}+2a+1$, $|z-\lambda_n|\le \sqrt{m_n}+2a$. 

Now, we use a remarkable result by Berndtsson \cite[Theorem 4]{B97} (see also \cite{13}): 
the solution $u_N$ of the equation $\overline{\partial} u_N=\overline{\partial} F_N$ minimizing the 
integral 
$$
\int_{\mathbb C}|u_N(z)|^2e^{-\psi_N(z)}\,dm(z)
$$
satisfies the inequality 
$$
\int_{\mathbb C}|u_N(z)|^2e^{-\psi_N(z)}w_{N,\zeta}(z)\,dm(z)
\le 
\int_{\mathbb C}|\overline{\partial} F_N(z)|^2e^{-\psi_N(z)}\frac{w_{N,\zeta}(z)}{\Omega_{N,\zeta}(z)}\,dm(z)
$$
under condition \eqref{d145}, for every $\zeta\in Y_N$. 
Since $\supp \overline{\partial} F_N\subset Q_N$ and $\Omega_{N}=\frac12 \chi_{Q_N}$ we have 
$$
\int_{\mathbb C}|u_N(z)|^2e^{-\psi_N(z)}w_{N,\zeta}(z)\,dm(z)\le 2\int_{Q_N}|\overline{\partial} F_N(z)|^2e^{-|z|^2}w_{N,\zeta}(z)\,dm(z).
$$

Hence, we obtain that
\begin{multline*}
\int_{D(\zeta,1)}|u_N(z)|^2e^{-|z|^2}\,dm(z)\\ \le 
2\sum_{1\le n\le N}\int_{D'_n\setminus D_n}|\overline{\partial} F_N(z)|^2e^{-|z|^2}
\frac{dm(z)}{|z-\zeta|^3+2a}\le C
\end{multline*}
for an absolute constant $C$. 
Next, $\overline{\partial} F_N$ vanishes on $D(\zeta,1)$, and, hence, $u_N$ is analytic in $D(\zeta,1)$. 
By the mean value property applied to the function $u_N(z)\exp(|\zeta|^2/2-z\overline{\zeta})$, we have
$$
|u_N(\zeta)|^2e^{-|\zeta|^2}\le C_1 \int_{D(\zeta,1)}|u_N(z)|^2e^{-|z|^2}\,dm(z).
$$
Thus, 
$$
|u_N(\zeta)|^2e^{-|\zeta|^2}\le C_1.
$$
To extend this estimate to $\zeta\in\bigcup_{1\le n\le N}D''_n$, we just apply 
Lemma~\ref{Lemt3} to the functions $u_N+\rho_n-F_N$ analytic in $D''_n$ and use 
that $u_N$ and, hence, $u_N+\rho_n-F_N$ vanish of order $m_n$ at $\lambda_n$,  
$1\le n\le N$.

It remains to set $f_N=F_N-u_N$.
Then 
$$
|f_N(z)|\le Ce^{|z|^2/2},\qquad z\in \mathbb C,
$$
with $C$ independent of $N\ge 1$ and 
$$
f_N-\rho_n\in N^\infty_{{\lambda_n}},\qquad 1\le n\le N.
$$

Finally, for the necessity part,  
the argument is completely analogous to that in the proof of the necessity part of Theorem~\ref{thm-int}. 
Instead of Lemma~\ref{Lem3} we use Lemma~\ref{Lemt2b}.
\end{proof}

\section{The case $\mathcal{F}^p_{\alpha}$, $2<p<\infty$} \label{sec8}

In this section, we formulate and sketch the proofs of the following two results. Let $2<p<\infty$. 

\begin{theorem}\label{thmsamp} 
\begin{itemize} 
 \item [(a)] If $X$ is a sampling divisor for $\mathcal{F}^p_{\alpha}$,  
then $X$ satisfies the finite overlap condition and there exists $C>0$  such that 
\[
 \bigcup_{\lambda\in \Lambda} D(\lambda,\sqrt{m_\lambda/\alpha}+C)= \C.
\]

 \item [(b)] Conversely, let the divisor $X$ satisfy 
 the finite overlap condition. There exists $C=C(S_X)>0$ such that if for some compact subset $K$ of $\C$ we have 
\[
 \bigcup_{\lambda\in \Lambda,\, m_\lambda>\alpha C^2} D(\lambda,\sqrt{m_\lambda/\alpha}-C)= \C \setminus K, 
\]
then $X$ is a sampling divisor for $\mathcal{F}^p_{\alpha}$.
\end{itemize}
\end{theorem}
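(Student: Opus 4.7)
The proof follows the architecture of Theorem~\ref{thm2}, with two substantive changes: the $L^2$ orthogonality machinery of Section~\ref{sec3} has to be replaced by an $L^p$ substitute, and a weak-compactness/reflexivity argument replaces the Hilbert-space one.

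For part~(a), I would mirror Subsection~\ref{sampnec2}. Translations $T_z$ act isometrically on every $\mathcal{F}^p_\alpha$ (via $|T_zf(\zeta)|e^{-\alpha|\zeta|^2/2}=|f(\zeta-z)|e^{-\alpha|\zeta-z|^2/2}$), so the coherent state $T_z1$ has $L^p$ norm independent of $z$. An $L^p$ version of Lemma~\ref{lem1eq} then yields the finite overlap condition: applying the upper sampling inequality $\sum_\lambda\|f\|^p_{\mathcal{F}^p_\alpha/N^p_\lambda}\lesssim\|f\|_p^p$ to $f=T_z1$, and coupling it with the pointwise lower bound $\|T_z1\|_{\mathcal{F}^p_\alpha/N^p_\lambda}\gtrsim\chi_{D(\lambda,\sqrt{m_\lambda/\alpha})}(z)$ obtained by pairing $T_z1$ against the Taylor-coefficient functional of order $\approx[\alpha|z-\lambda|^2]$, forces $S_X<\infty$. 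For the covering part, if no constant $C$ works one produces $z_n$ with $\rho_n:=\dist(z_n,\bigcup_\lambda D(\lambda,\sqrt{m_\lambda/\alpha}))\to\infty$, and the Gaussian tail estimate of \eqref{tz5} (which works in any $L^p$, since it merely integrates $e^{-p\alpha|z-z_n|^2/2}$ on the complement of $D(z_n,\rho_n)$) gives $\sum_\lambda\|T_{z_n}1\|^p_{\mathcal{F}^p_\alpha/N^p_\lambda}\to0$ while $\|T_{z_n}1\|_p\asymp1$, contradicting the lower sampling inequality.

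For part~(b), I would adapt Subsection~\ref{sampsuff2}. Assume by contradiction a sequence $f_n\in\mathcal{F}^p_\alpha$ with $\|f_n\|_p=1$ and $\sum_\lambda\|f_n\|^p_{\mathcal{F}^p_\alpha/N^p_\lambda}\to0$. Reflexivity of $\mathcal{F}^p_\alpha$ for $1<p<\infty$ gives a weakly convergent subsequence $f_n\rightharpoonup f$; weak lower semicontinuity of each quotient norm makes $f$ vanish at each $\lambda$ of order $m_\lambda$. The embedding $\mathcal{F}^p_\alpha\hookrightarrow\mathcal{F}^\infty_\alpha$, coming from $|f(z)|e^{-\alpha|z|^2/2}\lesssim\|f\|_p$, places $f$ in $\mathcal{F}^\infty_\alpha$, so Theorem~\ref{thm1} and the covering hypothesis force $f\equiv0$; hence $f_n\to0$ uniformly on compacta. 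The covering gives
\[
 1=\|f_n\|_p^p\le\int_K|f_n|^p\,d\mu_p+\sum_{\lambda\in\Lambda_1}\int_{D(\lambda,\sqrt{m_\lambda/\alpha}-C)}|f_n|^p\,d\mu_p,\qquad\Lambda_1=\{\lambda:m_\lambda>\alpha C^2\};
\]
the first term is $o(1)$. For $\eta>0$ fixed later, split $\Lambda_1$ via $\Lambda_2=\{\lambda:\|f_n\|^p_{\mathcal{F}^p_\alpha/N^p_\lambda}\le(\eta/2)\int_{D(\lambda,\sqrt{m_\lambda/\alpha})}|f_n|^pd\mu_p\}$. The $\Lambda_1\setminus\Lambda_2$ piece is at most $(2/\eta)\sum_\lambda\|f_n\|^p_{\mathcal{F}^p_\alpha/N^p_\lambda}=o(1)$, while for $\lambda\in\Lambda_2$ an $L^p$ analog of Lemma~\ref{Lem3} (see below) gives $\int_{D(\lambda,\sqrt{m_\lambda/\alpha}-C)}|f_n|^pd\mu_p\le C_p\eta\int_{D(\lambda,\sqrt{m_\lambda/\alpha})}|f_n|^pd\mu_p$; summing via finite overlap controls the total by $C_p\eta S_X\|f_n\|_p^p$. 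Choosing $\eta<1/(C_pS_X)$ produces the contradiction.

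The principal new ingredient, and what I expect to be the main obstacle, is the $L^p$ analog of Lemma~\ref{Lem3}: there should exist $a=a(\eta,p,\alpha)$ such that every $g\in\mathcal{F}^p_\alpha$ with $m\ge\alpha a^2$ and $\|g\|^p_{\mathcal{F}^p_\alpha/N^p_{0,m}}\le(\eta/2)\int_{D(\sqrt{m/\alpha})}|g|^pd\mu_p$ satisfies $\int_{D(\sqrt{m/\alpha}-a)}|g|^pd\mu_p\le C_p\eta\int_{D(\sqrt{m/\alpha})}|g|^pd\mu_p$. The $L^2$ proof uses Fourier orthogonality, which is unavailable here. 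Instead I would pick an $L^p$-minimizer $\tilde g=z^mh\in N^p_{0,m}$ (well defined by uniform convexity of $L^p$ and closedness of $N^p_{0,m}$), reduce via the triangle inequality to the same claim for $\tilde g$, and exploit Hardy's theorem: for $h$ entire, $r\mapsto\int_0^{2\pi}|h(re^{i\theta})|^pd\theta$ is nondecreasing, so a one-variable rearrangement yields
\[
 \frac{\int_{D(R-a)}|\tilde g|^p\,d\mu_p}{\int_{D(R)}|\tilde g|^p\,d\mu_p}\le\frac{\int_0^{R-a}r^{pm+1}e^{-p\alpha r^2/2}\,dr}{\int_0^R r^{pm+1}e^{-p\alpha r^2/2}\,dr},\qquad R=\sqrt{m/\alpha}.
\]
A Laplace-method analysis around the peak $r=\sqrt{m/\alpha}$, whose Gaussian half-width is $\sim(p\alpha m)^{-1/2}$, shows that this ratio is $\lesssim\mathrm{erfc}(a\sqrt{p\alpha}/\sqrt2)$, tending to $0$ as $a\to\infty$ uniformly in $m\ge\alpha a^2$. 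This Hardy--Laplace substitute for Lemma~\ref{l1}(c) is the only delicate step; everything else is a direct transposition of the $p=2$ argument.
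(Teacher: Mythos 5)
Your proposal for part~(b) and for the covering half of part~(a) is sound and runs parallel to the paper's argument, but the finite-overlap half of part~(a) has a genuine gap.

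The step where you obtain the finite overlap condition by ``pairing $T_z1$ against the Taylor-coefficient functional of order $\approx[\alpha|z-\lambda|^2]$'' does not yield a uniform lower bound on $\|T_z1\|_{\mathcal F^p_\alpha/N^p_\lambda}$ over all $z\in D(\lambda,\sqrt{m_\lambda/\alpha})$. After translating $\lambda$ to $0$ and setting $\rho=|z|$, $k=[\alpha\rho^2]$, the Taylor functional $L_k(f)=f^{(k)}(0)/k!$ has norm $\|L_k\|_{(\mathcal F^p_\alpha)^*}=\sqrt{\alpha^k/k!}\,\|e_k\|_q$ with $\|e_k\|_q\asymp k^{(2-q)/(4q)}$, while $|L_k(T_z1)|=e^{-\alpha\rho^2/2}\alpha^k\rho^k/k!$, and Stirling gives
\[
\frac{|L_k(T_z1)|}{\|L_k\|_{(\mathcal F^p_\alpha)^*}} \asymp k^{-1/(2q)}\;\longrightarrow\;0
\]
as $k\to\infty$ (this already fails at $p=q=2$, where the single-coefficient bound is $\asymp k^{-1/4}$; the uniform lower bound in Lemma~\ref{lem1eq} comes from summing $\asymp\sqrt k$ comparably sized coefficients, a mechanism unavailable once you replace $\ell^2$ by a dual-norm pairing). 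The paper sidesteps this by never attempting a direct lower bound on the quotient norm over the full disc. Instead it applies Lemma~\ref{lemA} \emph{contrapositively}: if $z_n$ lies at depth $>\delta+1$ inside $D(\lambda,\sqrt{m_\lambda})$, then the disc $D(z_n,1)$ sits inside $D(\lambda,\sqrt{m_\lambda}-\delta)$ and carries the full $L^p$-mass $\beta$ of $T_{z_n}1$, so a small quotient norm would contradict the lemma's conclusion. This only controls overlap of the \emph{shrunk} discs $D(\lambda,\sqrt{m_\lambda}-\delta-1)$, after which the paper uses a separate geometric argument (splitting the relevant $\lambda$'s by radius, shifting $w_n$ to $w_n'$, and isolating an angular sector) to pass from the shrunk discs to the full discs. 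Your outline does not contain either of these two ingredients, and without them the finite overlap condition is not established.

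Elsewhere your route is essentially equivalent to the paper's. Your $L^p$ substitute for Lemma~\ref{Lem3} --- reducing to $\tilde g=z^m h$ with $h$ entire, invoking monotonicity of $r\mapsto\int_0^{2\pi}|h(re^{i\theta})|^p\,d\theta$, and applying a Chebyshev-type rearrangement followed by a Laplace estimate on $\int_0^R r^{pm+1}e^{-p\alpha r^2/2}dr$ --- is a correct variant of the paper's Lemma~\ref{lemA}, which accomplishes the same with a mean-value selection of a good radius plus Lemma~\ref{Lemt1}; your Chebyshev inequality indeed holds since $\Phi$ nondecreasing gives $\int_0^{R-a}\Phi w\int_{R-a}^R w\le\Phi(R-a)\int_0^{R-a}w\int_{R-a}^R w\le\int_{R-a}^R\Phi w\int_0^{R-a}w$. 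The use of an exact $L^p$-minimizer is unnecessary (any near-minimizer does, as in the paper), and for the covering condition in (a) the paper's shortcut --- deduce $\sum_\lambda\|g_{\lambda,n}\|_2^2\to0$ as in the $\mathcal F^\infty$ case, then use $\|g\|_p\lesssim\|g\|_2$ and $\ell^2\hookrightarrow\ell^p$ for $p>2$ --- is cleaner than integrating the Gaussian tail directly in $L^p$, though yours is also workable.
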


\begin{theorem}\label{thmintp}   
\begin{itemize}
\item[(a)] If $X$ is an interpolating divisor for $\mathcal{F}^p_{\alpha}$, then there exists $C=C(M_X)>0$ such that
the discs \newline\noindent  $\{D(\lambda,\sqrt{m_\lambda/\alpha}-C)\}_{\lambda\in \Lambda,\, m_\lambda>\alpha C^2}$ are pairwise  disjoint.
\item[(b)] Conversely, there exists $C_0>0$ such that if the discs \newline\noindent$\{D(\lambda,\sqrt{m_\lambda/\alpha}+C_0)\}_{\lambda\in \Lambda}$  are pairwise  disjoint, then $X$ is an interpolating divisor for $\mathcal{F}^p_{\alpha}$.
\end{itemize}
\end{theorem}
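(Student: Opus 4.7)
The plan is to mirror the proofs of Theorems~\ref{thm-int} and~\ref{thm2infty}, using at two crucial points the standard pointwise embedding $\mathcal{F}^p_\alpha\hookrightarrow\mathcal{F}^\infty_\alpha$ (which follows from the subharmonicity of $|f|^p$ and yields $|f(z)|e^{-|z|^2/2}\lesssim\|f\|_{p}$ for $f\in\mathcal{F}^p_\alpha$ with $\alpha=1$). As throughout the paper, I take $\alpha=1$.

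For part (a), suppose that the discs $\{D(\lambda,\sqrt{m_\lambda}-A)\}_{m_\lambda>A^2}$ are not pairwise disjoint for some large $A$. As in Section~\ref{intnec2}, pick $\lambda\neq\lambda'\in\Lambda$ and $w_0\in\C$ with $D(w_0,1)\subset D(\lambda,\sqrt{m_\lambda}-A+1)\cap D(\lambda',\sqrt{m_{\lambda'}}-A+1)$, and use interpolation in $\mathcal{F}^p_\alpha$ to pick $f$ with $f\in N^p_{\lambda,m_\lambda}$, $f-T_{w_0}1\in N^p_{\lambda',m_{\lambda'}}$, and $\|f\|_p\le M_X$. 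The embedding gives $\|f\|_\infty\lesssim M_X$, and since $T_{-\lambda}f$ and $T_{-\lambda'}(f-T_{w_0}1)$ have vanishing $\mathcal{F}^\infty/N^\infty_{0,\cdot}$-quotient norms, Lemma~\ref{Lemt2b} yields, for any prescribed $\beta>0$ and corresponding $\delta=\delta(\beta)$,
\[
|f(z)|e^{-|z|^2/2}\le \beta M_X\text{ on }D(\lambda,\sqrt{m_\lambda}-\delta),
\]
with an analogous bound for $f-T_{w_0}1$ on $D(\lambda',\sqrt{m_{\lambda'}}-\delta)$. For $A>\delta+1$ both bounds hold on $D(w_0,1)$, so $|T_{w_0}1(z)|e^{-|z|^2/2}\le 2\beta M_X$ there, contradicting $|T_{w_0}1(w_0)|e^{-|w_0|^2/2}=1$ once $\beta$ is chosen small in terms of $M_X$.

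For part (b), I run the proof of Theorem~\ref{thm2infty}(b) verbatim: choose polynomial representatives $p_n$ of degree $<m_n$ with $\|p_n\|_p$ comparable to the corresponding quotient norm, set $Q_n=T_{\lambda_n}p_n$ (with $T_{\lambda_n}$ an isometry of $\mathcal{F}^p_\alpha$), and form the smooth cut-off sum $F_N(z)=\sum_{n\le N}Q_n(z)\tilde\eta(|z-\lambda_n|-r_n)$. With the same weight $\psi_N$, the same $\Omega_N=\tfrac12\chi_{Q_N}$, and the same comparison kernel $w_{N,\zeta}(z)=1/(|z-\zeta|^3+2a)$, Berndtsson's theorem combined with the sub-mean-value property gives, for every $\zeta\in Y_N=\C\setminus\bigcup_n D''_n$,
\[
|u_N(\zeta)|^2e^{-|\zeta|^2}\lesssim (g\ast w_0)(\zeta),\qquad g(z):=|\overline\partial F_N(z)|^2e^{-|z|^2},
\]
where $w_0(z):=1/(|z|^3+2a)\in L^1(\C)$. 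Raising to the $p/2$-th power, integrating in $\zeta$, and applying Young's convolution inequality with exponents $(p/2,1,p/2)$ yields
\[
\int_{Y_N}|u_N|^pe^{-p|\zeta|^2/2}\,dm\lesssim \|w_0\|_{L^1}^{p/2}\int|\overline\partial F_N|^pe^{-p|z|^2/2}\,dm\lesssim \sum_n\|p_n\|_p^p,
\]
the last step using the disjointness of the annuli $D'_n\setminus D_n$ and the $\mathcal{F}^p$-isometry of $T_{\lambda_n}$. As in the $p=\infty$ proof, $e^{-\psi_N}\asymp|z-\lambda_n|^{-2m_n}$ near $\lambda_n$ forces $u_N$ to vanish to order $m_n$ there, so $f_N=F_N-u_N$ is analytic and interpolates the data; weak-$L^p$ compactness then produces the required limit $f\in\mathcal{F}^p_\alpha$.

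The main obstacle is extending the $L^p$-bound from $Y_N$ to the interior $\bigcup_n D''_n$: in the $p=\infty$ case this step is handled pointwise by Lemma~\ref{Lemt3}, but for finite $p$ one has to control $\int_{D''_n}|f_N|^pe^{-p|z|^2/2}\,dm$ from the values of $f_N$ on $\partial D''_n\subset Y_N$ together with the fact that $p_n-f_N$ is analytic on $D''_n$ and vanishes to order $m_n$ at $\lambda_n$. A weighted maximum-principle argument applied to $(p_n-f_N)(z)/(z-\lambda_n)^{m_n}$ on $D''_n$, in the spirit of Lemma~\ref{Lemt3}, should suffice and, en route, pin down the absolute constant $C_0$ in the statement.
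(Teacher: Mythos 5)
Your proposal is essentially correct and closely parallels the paper's argument, with one genuine (and perfectly workable) variation in part (a).

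For part (a), the paper proceeds exactly as in the proof of Theorem~\ref{thm-int}(a), but with Lemma~\ref{lemA} (the $L^p$ analogue of Lemma~\ref{Lem3}) replacing Lemma~\ref{Lem3}: a small $\mathcal F^p/N^p_{0,m}$-quotient norm implies that the weighted $L^p$ mass over $D(\sqrt m-\delta)$ is small, and the contradiction is read off from the positive $L^p$ mass of $T_{w_0}1$ on $D(w_0,1)$. You instead embed $\mathcal F^p_\alpha\hookrightarrow\mathcal F^\infty_\alpha$ and invoke Lemma~\ref{Lemt2b}, obtaining a pointwise contradiction at $w_0$. Both routes are valid; yours avoids the $L^p$ version of the local estimate at the cost of invoking the standard pointwise embedding. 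One small point to make explicit: when you apply Lemma~\ref{Lemt2b} to $T_{-\lambda'}(f-T_{w_0}1)$ you need $\|f-T_{w_0}1\|_\infty\lesssim M_X$, which follows since $M_X\ge1$; this is implicit in your sketch but worth recording.

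For part (b), your argument is the paper's: cut-offs, the weight $\psi_N$ built from Lemma~\ref{dd4}, Berndtsson's estimate with the kernel $w_{N,\zeta}$, the mean-value inequality on $Y_N$, and then Young's inequality on the convolution $g*w_0$ with exponents $(p/2,1,p/2)$. The step you flag as not fully pinned down --- extending the $L^p$ bound from $Y_N$ into $\bigcup_n D''_n$ --- is exactly what Lemma~\ref{lemB} is designed for: it controls $\int_{|z|<\sqrt m}|f|^pe^{-p|z|^2/2}\,dm$ by $\int_{\sqrt m<|z|<\sqrt m+1}|f|^pe^{-p|z|^2/2}\,dm$ whenever $f$ vanishes to order $m$ at $0$; applied to the analytic function $u_N+\rho_n-F_N$ on $D''_n$, it transfers the bound from the annulus (which lies in $Y_N$) to the interior. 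Your proposed maximum-principle argument on $(p_n-f_N)(z)/(z-\lambda_n)^{m_n}$ is the proof idea of Lemma~\ref{Lemt3}, i.e. the $p=\infty$ version; what is actually needed for finite $p$ is the integrated form of Lemma~\ref{lemB}, which the paper proves by the same $\varphi=\varphi_m$ bookkeeping via Lemma~\ref{Lemt1}. So the missing lemma exists and works; you correctly guessed its flavor but did not supply it.

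One presentational mismatch: you borrow the notation $Q_n=T_{\lambda_n}p_n$ from the proof of Theorem~\ref{thm-int}, whereas the $p=\infty$ (and hence the $\mathcal F^p$) construction uses arbitrary representatives $\rho_n$ of the cosets rather than polynomial ones. This is immaterial; the estimates only use the quotient norms.
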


The proofs work essentially the same way as for $p=2$ or $p=\infty$.
We start with two auxiliary results replacing Lemmas \ref{Lemt2b} or \ref{Lem3}, and \ref{Lemt3}.
Again we restrict ourselves to $\alpha=1$.

\subsection{Two lemmas}

\begin{lemma}\label{lemA}
Given $0<\beta<1$, there exist $\delta>0$ such that if 
$f\in\mathcal F^p$, $\|f\|_{p}\le 1$, $m\ge \delta^2$, and 
$\|f\|_{\mathcal F^p/N^p_{0,m}}<\beta/2$, then 
$$
\int_{|z|<\sqrt{m}-\delta}|f(z)|^pe^{-p|z|^2/2}\,dm(z)\le \beta.
$$
\end{lemma}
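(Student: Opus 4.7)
The argument parallels Lemmas~\ref{Lem3} and \ref{Lemt2b}. By hypothesis there is $g(z)=z^m h(z)\in\mathcal F^p$ (with $h$ entire) such that $\|f-g\|_p<\beta/2$, so $\|g\|_p\le 3/2$. From $|f|^p\le 2^{p-1}(|g|^p+|f-g|^p)$ and $2^{p-1}(\beta/2)^p=\beta^p/2\le\beta/2$ it is enough to bound
\[
I:=\int_{|z|<\sqrt{m}-\delta}|g(z)|^pe^{-p|z|^2/2}\,dm(z)
\]
by a small constant (depending on $p$) times $\beta$.

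Setting $\varphi(r)=r^2/2-m\log r$ and $M(r)=\int_0^{2\pi}|h(re^{i\theta})|^p\,d\theta$, polar coordinates give
\[
\int_{|z|<R}|g|^pe^{-p|z|^2/2}\,dm=\int_0^{R}rM(r)e^{-p\varphi(r)}\,dr.
\]
Since $h$ is entire, the circle means $M(r)$ are non-decreasing in $r$ (Hardy's convexity theorem), so
\[
I\le\|g\|_p^p\cdot\frac{\int_0^{\sqrt{m}-\delta} re^{-p\varphi(r)}\,dr}{\int_{\sqrt{m}-\delta}^\infty re^{-p\varphi(r)}\,dr}.
\]

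To estimate the ratio, apply Lemma~\ref{Lemt1}: part~(b), whose $m\ge a^2$ requirement is guaranteed by $m\ge\delta^2$, gives $\varphi(r)-\varphi(\sqrt{m})\ge(\sqrt{m}-r)^2$ on $(0,\sqrt{m}]$, and part~(a) gives $\varphi(r)-\varphi(\sqrt{m})\le(r-\sqrt{m})^2$ on $[\sqrt{m},\sqrt{m}+1]$. Substituting $s=\sqrt{m}-r$ in the numerator and restricting the denominator to $[\sqrt{m},\sqrt{m}+1]$ yields
\[
\int_0^{\sqrt{m}-\delta}re^{-p\varphi(r)}\,dr\le e^{-p\varphi(\sqrt{m})}\sqrt{m}\int_\delta^\infty e^{-ps^2}\,ds\le C_p\,\frac{e^{-p\delta^2}}{\delta}\,e^{-p\varphi(\sqrt{m})}\sqrt{m},
\]
\[
\int_{\sqrt{m}-\delta}^\infty re^{-p\varphi(r)}\,dr\ge\int_{\sqrt{m}}^{\sqrt{m}+1}re^{-p\varphi(r)}\,dr\ge c_p\,e^{-p\varphi(\sqrt{m})}\sqrt{m},
\]
with constants $C_p$, $c_p$ depending only on $p$. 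The factors $e^{-p\varphi(\sqrt{m})}\sqrt{m}$ cancel, leaving a ratio bounded by $C'_p\,e^{-p\delta^2}/\delta$; choosing $\delta$ large (as a function of $\beta$ and $p$ alone) forces $I\le 2^{-p}\beta$, giving the required bound on the weighted $L^p$-integral of $f$.

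The main obstacle is precisely this cancellation of the $\sqrt{m}$ factor. A purely pointwise argument in the style of Lemma~\ref{Lemt2b} yields only $|g(z)|e^{-|z|^2/2}\lesssim e^{-(\sqrt{m}-|z|)^2}$ on $|z|\le\sqrt{m}-\delta$, and integrating its $p$-th power over a disc of area $\asymp m$ retains a factor $\sqrt{m}$, ruining the estimate for unbounded multiplicities. Hardy's convexity is used precisely to pair each contribution from $r<\sqrt{m}$ with a matched piece of the radial mass from $r>\sqrt{m}$, converting the pointwise Gaussian decay provided by Lemma~\ref{Lemt1} into an $m$-independent estimate.
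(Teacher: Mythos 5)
Your proof is correct and follows essentially the same strategy as the paper's: both reduce to the vanishing function $g=z^mh$, exploit the monotonicity of the circle means $\int_0^{2\pi}|h(re^{i\theta})|^p\,d\theta$ (i.e.\ subharmonicity of $|h|^p$), and then conclude with Lemma~\ref{Lemt1}. The only cosmetic difference is that you phrase the comparison as a ratio of two radial integrals cancelling the $\sqrt{m}e^{-p\varphi(\sqrt m)}$ factor, whereas the paper invokes the mean value theorem for integrals to pin a reference radius $R\in(\sqrt{m}-1,\sqrt{m})$ before applying the same monotonicity; the conceptual content and the role of Lemma~\ref{Lemt1} are identical.
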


\begin{proof}
Let $h$ be an entire function such $\|f-z^mh\|^p_{p}<\beta/2$. 
Then, using that $\|f\|_{p}\le 1$, we obtain 
$$
\int_{|z|<\sqrt{m}}|z^mh(z)|^pe^{-p|z|^2/2}\,dm(z)\le  2.
$$

Hence, by the mean value theorem applied to the function $\gamma(r)=r
\int_{0}^{2\pi}|h(re^{it})|^pdt$, we get for some $\sqrt{m}-1<R<\sqrt{m}$ 
$$
\int_{0}^{2\pi}|h(Re^{it})|^p\,dt\le 2\frac{e^{p\varphi(R)}}{R},
$$
where $\varphi(s)=\varphi_m(s)=s^2/2-m\log s$. 
Using the subharmonicity of $|h|^p$, and thus that the concentric means are increasing, we deduce that 
\begin{multline*}
\lefteqn{\int_{|z|<\sqrt{m}-\delta}|z^mh(z)|^pe^{-p|z|^2/2}\,dm(z)}\\
=\int_{0}^{\sqrt{m}-\delta} e^{-p\varphi(r)}\int_0^{2\pi}|h(re^{it})|^p\,dt\,rdr
 \le 2\int_{0}^{\sqrt{m}-\delta} e^{p\varphi(R)-p\varphi(r)}\,\frac{rdr}{R}.
\end{multline*}

Using successively Lemma~\ref{Lemt1}~(a) and (b), we obtain that 
\begin{multline*} 
\int_{0}^{\sqrt{m}-\delta} e^{p(\varphi(R)-\varphi(r))}\,\frac{rdr}{R}
 \le c(p)\int_{0}^{\sqrt{m}-\delta} e^{p(\varphi(\sqrt{m})-\varphi(r))} \,dr\\
 \le c(p)\int_{\delta}^\infty e^{-ps^2} \,ds. 
\end{multline*}
This can be made arbitrarily small for an appropriate choice of $\delta=\delta(p,\beta)$,
which yields the result.
\end{proof}

\begin{lemma}\label{lemB}
 There exists $C>0$ such that 
if $m\ge 1$ and $f$ is a function analytic in
$D(R)$ and continuous up to $\partial D(R)$, $R=\sqrt{m}+1$, satisfying $f(0)=f'(0)=\ldots=f^{(m-1)}(0)=0$, then 
$$
\int_{|z|<\sqrt{m}}|f(z)|^pe^{-p|z|^2/2}\,dm(z)\le C \int_{\sqrt{m}<|z|<R}|f(z)|^pe^{-p|z|^2/2}\,dm(z).
$$
\end{lemma}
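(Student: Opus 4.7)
The plan is to reduce the two-dimensional $L^p$-comparison to a purely one-dimensional (radial) one by factoring out the zero at the origin. Since $f$ vanishes to order $m$ at $0$, we can write $f(z)=z^m g(z)$ with $g$ holomorphic on $D(R)$. Setting $\varphi_m(r)=r^2/2-m\log r$, as in Section~\ref{sec7}, the integrand transforms as
\begin{equation*}
|f(z)|^p e^{-p|z|^2/2}=|g(z)|^p\,e^{-p\varphi_m(|z|)}.
\end{equation*}
Recall that $\varphi_m$ attains its unique minimum on $(0,\infty)$ at $r=\sqrt m$. After passing to polar coordinates, the target inequality becomes
\begin{equation*}
\int_0^{\sqrt m} M(r)\,r\,e^{-p\varphi_m(r)}\,dr \le C\int_{\sqrt m}^{R} M(r)\,r\,e^{-p\varphi_m(r)}\,dr,
\end{equation*}
where $M(r)=\int_0^{2\pi}|g(re^{it})|^p\,dt$.

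The next step is the key observation: because $g$ is holomorphic and $p\ge 1$, the function $|g|^p$ is subharmonic on $D(R)$, so $M$ is nondecreasing on $[0,R)$. Thus $M(r)\le M(\sqrt m)$ for $r\le\sqrt m$ and $M(r)\ge M(\sqrt m)$ for $r\ge\sqrt m$, and the inequality reduces to the $g$-free statement
\begin{equation*}
\int_0^{\sqrt m} r\,e^{-p\varphi_m(r)}\,dr \le C\int_{\sqrt m}^{\sqrt m+1} r\,e^{-p\varphi_m(r)}\,dr.
\end{equation*}

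To dispose of this, I would change variables $r=\sqrt m\pm s$ and invoke Lemma~\ref{Lemt1}. For the left-hand side, part~(b) gives $\varphi_m(\sqrt m-s)\ge\varphi_m(\sqrt m)+s^2$ for $0\le s\le\sqrt m$, hence
\begin{equation*}
\int_0^{\sqrt m} r\,e^{-p\varphi_m(r)}\,dr \le \sqrt m\,e^{-p\varphi_m(\sqrt m)}\int_0^{\sqrt m} e^{-ps^2}\,ds \le C_1\sqrt m\,e^{-p\varphi_m(\sqrt m)}.
\end{equation*}
For the right-hand side, part~(a) gives $\varphi_m(\sqrt m+s)\le\varphi_m(\sqrt m)+s^2$, so
\begin{equation*}
\int_{\sqrt m}^{\sqrt m+1} r\,e^{-p\varphi_m(r)}\,dr \ge \sqrt m\,e^{-p\varphi_m(\sqrt m)}\int_0^1 e^{-ps^2}\,ds \ge C_2\sqrt m\,e^{-p\varphi_m(\sqrt m)}.
\end{equation*}
Taking the ratio yields the claim with $C=C_1/C_2$.

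The main (mild) technical point is that the constants must be absolute, independent of $m\ge 1$. This is exactly what Lemma~\ref{Lemt1} secures: the quadratic Gaussian tail bounds around $r=\sqrt m$ reflect the fact that the Hessian of $\varphi_m$ at its minimum is bounded below by an absolute constant, so the one-sided Gaussian integrals $\int_0^{\sqrt m} e^{-ps^2}\,ds$ and $\int_0^1 e^{-ps^2}\,ds$ are trapped between two positive quantities depending only on $p$.
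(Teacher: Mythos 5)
Your proof is correct and follows the same strategy as the paper: factor $f=z^m g$, invoke the subharmonicity of $|g|^p$ so that its circular means $M(r)$ are nondecreasing, and control the resulting radial integrals by the Gaussian estimates of Lemma~\ref{Lemt1}. The only (favorable) variation is that you compare $M(r)$ to $M(\sqrt m)$ directly on both sides, which neatly cancels the $g$-dependence, whereas the paper normalizes the outer annulus integral to $1$ and extracts a mean-value radius $R_1\in(\sqrt m, R)$ to play that role; your version skips that step and is marginally cleaner.
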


\begin{proof}
Set $f(z)=z^{m}g(z)$, where the function $g$ is holomorphic in $D(R)$ and continuous up to $\partial D(R)$. 
Let 
$$
\int_{\sqrt{m}<|z|<R}|f(z)|^pe^{-p|z|^2/2}\,dm(z)=1.
$$
Then, as in the proof of Lemma~\ref{lemA}, for some $\sqrt{m}<R_1<R$ we have 
$$
\int_{0}^{2\pi}|g(R_1e^{it}))|^p\,dt\le \frac{e^{\varphi(R_1)}}{R_1}.
$$

Arguing as in the proof of Lemma~\ref{lemA}, we get
\[
 \int_{|z|\le \sqrt{m}}|f(z)|^pe^{-p|z|^2/2}\,dm(z)
 \le \int_{0}^{\sqrt{m}} e^{p\varphi(R_1)-p\varphi(r)}\,dr.
\]

By Lemma~\ref{Lemt1}~(a) we have $\varphi(R_1)\le \varphi(\sqrt{m})+1$, and thus with Lemma~\ref{Lemt1}~(b)
we can control the right hand side by a uniform constant.
\end{proof}

\subsection{Proof of Theorem~\ref{thmsamp}}

\subsubsection{Necessary condition}

Let us begin with the finite overlap condition. 
We will do this proof in two steps. First we establish a weaker overlap condition (with a smaller radius
$\sqrt{m_\lambda}-C$), and then use a geometric argument to switch to our finite overlap condition.

Fix $\beta=\int_{|z|\le 1} e^{-p|z|^2/2}dm(z)$, and
let $\delta$ be the corresponding para\-meter from Lemma~\ref{lemA}. Suppose 
that there exists $z_n$ such that 
\[
M_{X,\delta+1}(z_n):=
 \sum_{\lambda\in\Lambda}\chi_{D(\lambda,\sqrt{m_\lambda}-\delta-1)}(z_n)\ge n.
\]
Let $\lambda$ be a point in $\Lambda$ with 
$|z_n-\lambda|<\sqrt{m_\lambda}-\delta-1$. Set $m=m_\lambda$. By translation we can assume that 
$\lambda=0$ and $|z_n|<\sqrt{m}-\delta-1$.
Observe that
\begin{eqnarray*}
 \int_{|z|\le \sqrt{m}-\delta} |T_{z_n}1(z)|^pe^{-p|z|^2/2}\,dm(z)
 &\ge& \int_{D(z_n,1)}|T_{z_n}1(z)|^pe^{-p|z|^2/2}\,dm(z)\\
 &=&\int_{|z|\le 1} e^{-p|z|^2/2}\,dm(z)=\beta.
\end{eqnarray*}
By contraposition, from Lemma~\ref{lemA} we deduce that $\|T_{z_n}1\|_{\mathcal{F}^p/N_{\lambda}^p}
\ge \beta/2>0$. This being true for every $D(\lambda,\sqrt{m_\lambda}-\delta-1)$ meeting $z_n$
we get $\sum_{\lambda\in\Lambda}\|T_{z_n}1\|_{\mathcal{F}^p/N_{\lambda}^p}\to+\infty$.
This contradiction establishes the following weak overlap condition:
\begin{eqnarray}\label{wfoc}
 \sup_{z\in\C}\sum_{\lambda\in\Lambda}\chi_{D(\lambda,\sqrt{m_\lambda}-\delta-1)}(z)<\infty.
\end{eqnarray}

We have to pass to discs with radius $\sqrt{m_\lambda}$. Suppose the finite overlap condition is not
true but we have \eqref{wfoc}. Let $w_n$ be a sequence such that 
\[
M_{X,0}(w_n):=
 \sum_{\lambda\in\Lambda}\chi_{D(\lambda,\sqrt{m_\lambda})}(w_n)\ge n.
\]
Suppose that there are at least $n/2$ points $\lambda\in\Lambda$ such that the discs $D(\lambda,\sqrt{m_\lambda})$ contain $w_n$ and have  
radii less than $10(\delta+1)$. Then, since the  
quotient norms of $T_{w_n}1$ at $\lambda$ are minorated uniformly for those $\lambda$, the sum of the $p$-th powers of these quotient norms tends to infinity, which is
impossible.

It remains the case when there are at least $n/2$ points $\lambda\in\Lambda$ such that the discs $D(\lambda,\sqrt{m_\lambda})$ contain $w_n$ and have radii at least $10(\delta+1)$.
We can assume that at least $n/40$ of these points $\lambda$ are in an angle $\Gamma$ with vertex at $w_n$ and with 
opening $\pi/10$, say $\Gamma=\{\zeta:|\arg(\zeta-w_n)|<\pi/20\}$. Set $w'_n=w_n+2(\delta+1)$. Then $\lambda\in\Gamma\cap\Lambda$, 
$|\lambda-w_n|<\sqrt{m_\lambda}$ and $\sqrt{m_\lambda}\ge 10(\delta+1)$ imply together that $|\lambda-w'_n|<\sqrt{m_\lambda}-\delta-1$ 
(we use here that $R\ge 10$, $|\theta|<\pi/20$ $\implies$ $|Re^{i\theta}-2|<R-1$).
Thus, $M_{X,\delta+1}(w'_n)\ge n/40$ in contradiction to \eqref{wfoc}.
\\

To get the second necessary condition, we argue as in Subsection~\ref{sampnecinfty} to get
$$
\sum_{\lambda\in\Lambda}\|g_{\lambda,n}\|^2_{2}\to 0,\qquad n\to\infty.
$$
Then, since $p>2$, 
$$
\Bigl(\sum_{\lambda\in\Lambda}\|g_{\lambda,n}\|^p_{p}\Bigr)^{1/p}
 \lesssim
\Bigl(\sum_{\lambda\in\Lambda}\|g_{\lambda,n}\|^2_{2}\Bigr)^{1/2}\to 0,\qquad n\to\infty.
$$

\subsubsection{Sufficient condition}

Suppose $\|f\|_p=1$, $\sum_{\lambda}\|f_n\|^p_{\mathcal{F}^p/N_{\lambda}^p}\to 0$, $n\to+\infty$.
Passing to a subsequence we can assume that $f_n$ converges weakly to $f$. If $f\neq 0$ then
$\Lambda$ is a zero divisor in $\mathcal{F}^p\subset \mathcal{F}^{\infty}$, and we get a  contradiction.

Otherwise $f_n$ goes weakly to 0. In order to repeat the reasoning of Subsection~\ref{sampsuff2} (with appropriate change to the quotient norm), we
use again Lemma~\ref{lemA} (which replaces here Lemma~\ref{Lem3}).


\subsection{Proof of Theorem~\ref{thmintp}}

\subsubsection{Necessity}

One can argue like in Subsection~\ref{intnec2} 
using the above Lemma~\ref{lemA} instead of Lemma~\ref{Lem3}.  

\subsubsection{Sufficiency}

This uses the case $p=+\infty$. Starting from data $(f_{\lambda})\in \ell^p(\mathcal{F}^p/
N_{\lambda}^p)\subset \ell^{\infty}(\mathcal{F}^{\infty}/
N_{\lambda}^{\infty})$ we construct the smooth interpolating function $F$ which is in the right
weighted $L^p$ space. We then solve the $\bar\partial$-problem as for $p=\infty$ and
reach the inequality 
\begin{eqnarray*}
|u_N(\zeta)|^2e^{-|\zeta|^2}&\le&\int_{D(\zeta,1)}|u_N(z)|^2e^{-|z|^2}\,dm(z) \\
 &\le&
2\sum_{1\le n\le N}\int_{D'_n\setminus D_n}|\overline{\partial} F_N(z)|^2e^{-|z|^2}
\frac{dm(z)}{|z-\zeta|^3+2a}.
\end{eqnarray*}
Since $\overline{\partial} F_N\in L^p(e^{-p|z|^2/2})$, and $p>2$, we observe 
$|\overline{\partial} F_N(z)|^2e^{-|z|^2}\in L^{p/2}(dm)$. Hence, the right hand side is a convolution
of an $L^{p/2}$ function with an $L^1$ function, which, by Young's
inequality, is in $L^{p/2}(dm)$. Thus, the
left hand side is an $L^{p/2}$-function in $\zeta$ (uniformly controlled in $N$). 

It remains to extend the estimate obtained here to the union of some discs as at the end of the proof of Theorem~\ref{thm2infty}. 
Instead of Lemma~\ref{Lemt3}, we  use here Lemma~\ref{lemB}.

\end{document}